\theoremstyle{plain}
\newtheorem{theorem}{Theorem}[section]
\newtheorem{proposition}[theorem]{Proposition}
\newtheorem{lemma}[theorem]{Lemma}
\newtheorem{corollary}[theorem]{Corollary}
\theoremstyle{definition}
\newtheorem{remark}[theorem]{Remark}
\theoremstyle{definition}
\newtheorem{definition}[theorem]{Definition}
\newtheorem{assumption}[theorem]{Assumption}
\numberwithin{equation}{section}
\newcommand{\linspan}{\mathop{\rm span}\nolimits}
\newcommand{\rest}{\left.\kern-2\nulldelimiterspace\right|_}
\newcommand{\norm}[2]{\left|#1\right|_{#2}}
\newcommand{\dnorm}[2]{\left\|#1\right\|_{#2}}
\newcommand{\fractx}[2]{{\textstyle\frac{#1}{#2}}}
\newcommand{\Id}{{\mathbf1}}
\newcommand{\ex}{\mathrm{e}}
\newcommand{\p}{\partial}
\newcommand{\e}{\varepsilon}
\newcommand{\ed}{\mathrm d}
\newcommand*{\Bigcdot}{\raisebox{-.25ex}{\scalebox{1.25}{$\cdot$}}}
\newcommand{\N}{{\mathbb N}}%natural
\newcommand{\R}{{\mathbb R}}%real
\newcommand{\M}{{\mathbb M}}
\newcommand{\D}{{\mathrm D}}%domain
\newcommand{\KKK}{{\mathbf K}}
\newcommand{\nnn}{\mathbf n}
\newcommand{\CC}{{\mathcal C}}
\newcommand{\DD}{{\mathcal D}}
\newcommand{\FF}{{\mathcal F}}
\newcommand{\GG}{{\mathcal G}}
\newcommand{\KK}{{\mathcal K}}
\newcommand{\LL}{{\mathcal L}}
\newcommand{\MM}{{\mathcal M}}
\newcommand{\NN}{{\mathcal N}}
\newcommand{\ZZ}{{\mathcal Z}}
\newcommand{\Ma}{{\mathbf M}}
\newcommand{\St}{{\mathbf S}}
\newcommand{\ovlineC}[1]{\overline C_{\left[#1\right]}}
\begin{document}
\title{Semiglobal exponential stabilization of nonautonomous semilinear parabolic-like systems}
%\titlerunning{Semiglobal stabilization of parabolic systems}

\author{S\'ergio S.~Rodrigues}
%\authorrunning{S.~Rodrigues}

\address{Johann Radon Institute for Computational and Applied Mathematics,
\"OAW, %\newline\indent
Altenbergerstra{\normalfont\ss}e 69, 4040 Linz, Austria.}
% 
% Tel.: +43 732 2468 5241,\quad{\email{\tt sergio.rodrigues@ricam.oeaw.ac.at}}\newline \indent
% OrcidId: 0000-0002-4604-4856}

%\curraddr{}
\email{sergio.rodrigues@ricam.oeaw.ac.at}
%\thanks{}

% %    author two information
% \author{}
% \address{}
% \curraddr{}
% \email{}
% \thanks{}

\subjclass[2010]{93D15, 93C10, 93B52, 93C20}

\keywords{Semiglobal exponential stabilization, nonlinear feedback, nonlinear nonautonomous parabolic systems,
finite-dimensional controller,
oblique projections}

\date{}

%\dedicatory{}

\begin{abstract}
It is shown that an explicit oblique projection nonlinear feedback controller is able
to stabilize semilinear parabolic equations, with
time-dependent dynamics and with a polynomial nonlinearity. 
The actuators are typically modeled by a finite number of indicator functions of small subdomains.
No constraint is imposed on the sign of the polynomial nonlinearity. The norm
of the initial condition can be arbitrarily large, and
the total volume covered
by the actuators can be arbitrarily small. The number of actuators depend
on the operator norm of the oblique projection, on the polynomial degree of the
nonlinearity, on the norm of the initial condition, and on the total volume
covered by the actuators.
The range of the feedback controller coincides with the range of the oblique projection,
which is the linear span of the actuators.
The oblique projection is performed along the orthogonal complement of a subspace spanned by a suitable finite
number of eigenfunctions of the diffusion operator.
For rectangular domains, it is possible to explicitly construct/place the actuators so that the
stability of the closed-loop system is guaranteed.
Simulations are presented, which
show the semiglobal stabilizing performance of the nonlinear feedback.
\end{abstract}

\maketitle

\pagestyle{myheadings}
\thispagestyle{plain}
\markboth{\sc S.~S.~Rodrigues}
{\sc Semiglobal stabilization of parabolic systems}

%%%%%%%%%%%%%%%%%%%%%%%%%%%%%%%%%%%%%%%
%%%%%%%%%%%%%%%%%%%%%%%%%%%%%%%%%%%%%%%
\section{Introduction}
Nonlinear parabolic equations appear in many models of real world evolution processes. Therefore, the study of
such equations is important
for real world applications. In particular, it is of interest to know whether it is possible to drive the evolution to
a given desired behavior or whether it is possible to stabilize such evolution process, by means of suitable controls.
The simplest model involving parabolic equations is the heat equation, modeling the evolution of the temperature in a
room~\cite[Chapitre~II]{Fourier1822}. 
Parabolic equations also appear in models for population dynamics~\cite{ChenJungel06,AnitaLanglais09}, traffic dynamics~\cite{NagataniEmmNak98},
and electrophysiology~\cite{NagumoAriYosh62}.

Usually, controlled parabolic equations can be written as a nonautonomous evolutionary system in the abstract form
\begin{equation}\label{sys-y}
      \dot y + Ay +A_{\rm rc}(t)y+\NN(t,y)-\sum_{i=1}^M u_i(t)\Psi_i=0,\quad    y(0)=y_0,
\end{equation}
where~$y$ is the  state, $y_0$ and~$\Psi_i$, $i\in\{1,\,2,\,\dots,\,M\}$, are given in a  Hilbert space~$H$,
and~$u(t)=(u_1,\dots,u_M)(t)$ is a control function at our disposal, taking values in~$\R^M$.
The linear operator~$A$ is a diffusion-like operator and the linear operator~$A_{\rm rc}$ is a time-dependent
reaction-convection-like operator. The operator
$\NN$ is a time-dependent nonlinear operator. The general properties
asked for~$A$, $A_{\rm rc}$, and~$\NN$ will be precised later on.

In the linear case, $\NN=0$, 
is has been proven in~\cite{KunRod18-cocv} that the closed-loop system
\begin{equation}\label{sys_FeedKy}
      \dot y + Ay +A_{\rm rc}(t)y-\KK_{U_M}^{\FF,\M}(t,y)=0,\quad    y(0)=y_0\in H,
\end{equation}
is {\em globally} exponentially stable, with the feedback control operator
\begin{equation}\label{FeedKy}
 y\mapsto\KK_{U_M}^{\FF,\M}(t,y)\coloneqq P_{U_M}^{E_\M^\perp}\left(Ay +A_{\rm rc}(t)y-\FF(y)\right),
\end{equation}
where
\begin{equation}\label{FF=lam1}
 \FF(y)=\lambda\Id y,
\end{equation}
provided the condition
\begin{align}
 \overline\mu_M&\coloneqq\alpha_{M+1}-\left(6+4\norm{P_{{U_M}}^{E_\M^{\perp}}}{\LL(H)}^2\right)
 \norm{A_{\rm rc}}{L^\infty((0,+\infty),\LL(H,V'))}^2
 >0\label{suffalpha.lin}
 \end{align}
holds true.
In~\eqref{FeedKy} and~\eqref{FF=lam1}, $\Id$ is the identity operator, ~$\lambda>0$ 
is an arbitrary constant, and~$P_{U_M}^{E_\M^\perp}$ stands for the oblique projection
in~$H$ onto the closed subspace~$U_M$ along
the closed subspace~$E_\M^\perp$. Where~$U_M\coloneqq\linspan\{\Psi_i\mid i\in\{1,\,2,\,\dots,\,M\}\}$ 
is the linear span of our~$M$ linearly
independent actuators
and~$E_\M\coloneqq\linspan\{e_i\mid i\in\M\}$, with~$\M=\{1,\,2,\,\dots,\,M\}$, is the linear span of
``the'' first~$M$ linearly independent eigenfunctions of the diffusion operator~$A\colon\D(A)\to H$,
with domain $\D(A)\xhookrightarrow{\rm d,c} H$.
Further, $\alpha_{M+1}$ is the~$(M+1)$st eigenvalue of~$A$. 
The eigenvalues of~$A$, denoted by~$\alpha_i$, are supposed to satisfy
\[
 Ae_i=\alpha_ie_i,\qquad 0<\alpha_1\le\alpha_2\le\alpha_3\le\dots,\qquad \lim_{i\to+\infty}\alpha_i=+\infty.
\]

\begin{remark}
 Note that~$\KK_{U_M}^{\FF,\M}(t,y)=\sum_{i=1}^Mu_i(t)\Psi_i$ for suitable $u_i(t)\in\R$.
\end{remark}

It is not difficult to see that we can follow the arguments in~\cite[Thms.~3.5, 3.6,
and Rem.~3.8]{KunRod18-cocv} to conclude that
system~\eqref{sys_FeedKy} is still stable if we replace~\eqref{FF=lam1} by
\begin{equation*}
 \FF(y)=Ay+\lambda\Id y.
\end{equation*}

Observe that~\eqref{suffalpha.lin} concerns a single~$M\in\N$ and a single pair~$(U_M,E_\M)$.
The following result, which follows straightforwardly from the sufficiency of~\eqref{suffalpha.lin},
concerns a sequence of pairs~$(U_M,E_\M)_{M\in\N}$.
\begin{theorem}\label{T:L.intro}
 Assume that we can construct a sequence~$(U_M,E_\M)_{M\in\N}$ such that $\norm{P_{{U_M}}^{E_\M^{\perp}}}{\LL(H)}\le C_P$
 remains bounded, with~$C_P>0$
 independent of~$M$. Then system~\eqref{sys_FeedKy} is globally exponentially stable for large enough~$M$, 
 with~$\FF(y)\in\{\lambda \Id y,Ay+\lambda \Id y\}$.
\end{theorem}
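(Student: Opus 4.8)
The plan is to read~\eqref{suffalpha.lin} as a competition between two $M$-dependent quantities---the eigenvalue $\alpha_{M+1}$, which the spectral assumptions guarantee tends to $+\infty$, and the coefficient $6+4\norm{P_{U_M}^{E_\M^{\perp}}}{\LL(H)}^2$ multiplying the (fixed) reaction-convection term---and to observe that the uniform projection bound is exactly what keeps the second quantity from growing with $M$. Once that coefficient is frozen, the divergence of $\alpha_{M+1}$ forces $\overline\mu_M>0$ for all sufficiently large $M$, and the conclusion follows by applying the already-established sufficiency of~\eqref{suffalpha.lin} one value of $M$ at a time.

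Concretely, I would first insert the hypothesis $\norm{P_{U_M}^{E_\M^{\perp}}}{\LL(H)}\le C_P$ into the bracket of~\eqref{suffalpha.lin} to obtain the $M$-independent bound $6+4\norm{P_{U_M}^{E_\M^{\perp}}}{\LL(H)}^2\le 6+4C_P^2$. Since $A_{\rm rc}\in L^\infty((0,+\infty),\LL(H,V'))$ under the standing assumptions, the constant $C_\star\coloneqq(6+4C_P^2)\norm{A_{\rm rc}}{L^\infty((0,+\infty),\LL(H,V'))}^2$ is finite and independent of $M$, and $\overline\mu_M\ge\alpha_{M+1}-C_\star$. Invoking $\lim_{i\to+\infty}\alpha_i=+\infty$ then produces an index $M_0=M_0\bigl(C_P,\norm{A_{\rm rc}}{L^\infty((0,+\infty),\LL(H,V'))}\bigr)$ with $\alpha_{M+1}>C_\star$, hence $\overline\mu_M>0$, for every $M\ge M_0$.

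For each such $M$ the single-pair condition~\eqref{suffalpha.lin} holds, so the stability result of~\cite{KunRod18-cocv}---together with the remark extending its proof from $\FF(y)=\lambda\Id y$ to $\FF(y)=Ay+\lambda\Id y$---yields that the closed-loop system~\eqref{sys_FeedKy} is globally exponentially stable, completing the argument. There is no substantial obstacle here: the whole content is the bookkeeping observation that a uniform $C_P$ decouples the projection cost from the index $M$, after which the spectral growth does all the work. The only point worth stating explicitly is that the threshold $M_0$ is not universal but depends on both $C_P$ and the size of the reaction-convection operator, so that ``large enough $M$'' must be understood relative to these data.
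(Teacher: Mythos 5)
Your proposal is correct and is precisely the argument the paper has in mind: it declares Theorem~\ref{T:L.intro} to ``follow straightforwardly from the sufficiency of~\eqref{suffalpha.lin}'', and your write-up simply makes explicit the bound $\overline\mu_M\ge\alpha_{M+1}-(6+4C_P^2)\norm{A_{\rm rc}}{L^\infty((0,+\infty),\LL(H,V'))}^2$ together with $\alpha_{M+1}\to+\infty$. No gaps; the remark on the dependence of the threshold $M_0$ on $C_P$ and on the size of $A_{\rm rc}$ is consistent with the paper's discussion.
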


Our main goal is to prove that an analogous explicit feedback allow us to {\em semiglobally} stabilize 
nonlinear systems as~\eqref{sys-y},
for a suitable class of nonlinearities. We underline that we shall not
assume any condition on the sign of the nonlinearity~$\NN$,
which means that the uncontrolled solution may blow up in finite time. For results concerning blow up
of solutions, see~\cite{Ball77,MerleZaag98,Levine73}. In particular,
this means that we will have to guarantee that
the controlled solution does not blow up, which is a nontrivial task/problem. 
This is a problem we do not meet when
dealing with linear systems, because solutions of linear systems do not blow up in finite time. 

In the linear case the number~$M$ of actuators that allow us to stabilize the system does not depend on the initial
condition, while in the nonlinear case it does. We shall prove that~$M$ 
depends only on a suitable norm of the initial condition,
this dependence is what motivates the terminology ``{\em semiglobal} stability'' we use throughout the paper.

For nonlinear systems, previous results on the related literature are 
concerned with {\em local} stabilization, and such results are often derived through a suitable
nontrivial fixed point argument. In such situation the feedback operator
is linear and is such that it {\em globally} stabilizes  the linearized system, with~$\NN=0$.
In general, such linearization based feedback will be able to stabilize the
nonlinear system only if the initial condition is {\em small enough}, in a suitable norm.
Here, in order to cover {\em arbitrary large} initial conditions, and thus obtain the semiglobal
stabilization result for~\eqref{sys-y}, we will use a nonlinear feedback operator.
Instead of starting by constructing a feedback stabilizing the linearized system, we deal
directly with the nonlinear system.

%%%%%%%%%%%%%%%%%%%%%%%%%%%%%%%%%%%%%%%
\subsection{The main result}
We show that, for a suitable Hilbert space
~$V\xhookrightarrow{\rm d,c}H$, and for an arbitrary given~$R>0$,
system~\eqref{sys-y}
\begin{subequations}\label{sys_FeedKy-NF}
 \begin{equation}\label{sys_FeedKy-N}
      \dot y + Ay +A_{\rm rc}(t)y+\NN(t,y)-\KK_{U_M}^{\FF,\M,\NN}(t,y)=0,\quad    y(0)=y_0,
\end{equation}
with the feedback
\begin{equation}\label{FeedKy-N}
 y\mapsto\KK_{U_M}^{\FF,\M,\NN}(t,y)\coloneqq P_{U_M}^{E_\M^\perp}\bigl(Ay +A_{\rm rc}(t)y+\NN(t,y)-\FF(y)\bigr)
\end{equation}
\end{subequations}
is stable, provided the initial condition is in the ball~$\{v\in V\mid\norm{v}{V}<R\}$
and the pair~$(U_M,E_\M)$ satisfies  a suitable ``nonlinear version'' of~\eqref{suffalpha.lin}.
The number~$M$ of actuators needed
to stabilize the system will (or may) increase with~$R$. 
A precise statement of the main stability result concerning a single pair~$(U_M,E_\M)$, together a ``nonlinear version'' of the sufficient
stability condition~\eqref{suffalpha.lin}
is given hereafter, once we have introduced some notation and terminology.
A consequence of that result will be the following ``nonlinear version'' of Theorem~\ref{T:L.intro}.
\begin{theorem}\label{T:N.intro}
Assume that we can construct a sequence~$(U_M,E_\M)_{M\in\N}$ such that $\norm{P_{{U_M}}^{E_\M^{\perp}}}{\LL(H)}\le C_P$
 remains bounded, with~$C_P>0$
 independent of~$M$. Then, with $\FF(y)=Ay+\lambda \Id y$, system~\eqref{sys_FeedKy-NF}
 is  exponentially stable, for large enough~$M$ depending on~$\norm{y_0}{V}$.
\end{theorem}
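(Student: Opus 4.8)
The plan is to obtain Theorem~\ref{T:N.intro} as a direct corollary of the main single-pair stability result announced above, by checking that its ``nonlinear version'' of the sufficient condition~\eqref{suffalpha.lin} can always be met once~$M$ is taken large enough. First I would fix~$R\coloneqq\norm{y_0}{V}$ (or any slightly larger constant), so that~$y_0$ lies in an admissible ball~$\{v\in V\mid\norm{v}{V}<R'\}$ with~$R'>R$, and regard~$R$ as frozen from now on. That single-pair result then guarantees exponential stability of~\eqref{sys_FeedKy-NF} with~$\FF(y)=Ay+\lambda\Id y$ as soon as the pair~$(U_M,E_\M)$ satisfies a condition of the schematic form~$\alpha_{M+1}-\Theta_M>0$, where~$\Theta_M$ collects the destabilizing contributions coming from~$A_{\rm rc}$ and from the polynomial nonlinearity~$\NN$, and depends on~$\norm{P_{U_M}^{E_\M^\perp}}{\LL(H)}$, on the polynomial degree of~$\NN$, on~$\lambda$, and on~$R$.

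The key observation is that, under the standing hypothesis~$\norm{P_{U_M}^{E_\M^\perp}}{\LL(H)}\le C_P$, the quantity~$\Theta_M$ is bounded \emph{uniformly in}~$M$ by some constant~$\Theta_\ast=\Theta_\ast(C_P,\NN,\lambda,R)$, because every place where~$M$ enters~$\Theta_M$ does so only through the projection norm, which is controlled by~$C_P$, or through quantities that only improve the estimate. Since the eigenvalues of the diffusion operator satisfy~$\alpha_{M+1}\to+\infty$, there exists~$M_\ast=M_\ast(C_P,\NN,\lambda,R)$ such that~$\alpha_{M+1}>\Theta_\ast\ge\Theta_M$ for all~$M\ge M_\ast$; hence the nonlinear sufficient condition holds for every such~$M$, and the single-pair result applies. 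This is precisely the mechanism that makes~$M$ depend on~$\norm{y_0}{V}$ through~$R$: a larger initial datum inflates~$\Theta_\ast$, and the spectral gap~$\alpha_{M+1}$ must be pushed correspondingly higher by enlarging~$M$.

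The genuinely hard part is \emph{not} in this reduction but in the single-pair result it invokes, namely ruling out finite-time blow-up of the closed-loop solution, which is possible a priori since no sign condition is imposed on~$\NN$. The intended mechanism is that the oblique projection~$P_{U_M}^{E_\M^\perp}$ is placed so that the feedback~$\KK_{U_M}^{\FF,\M,\NN}$ cancels the unstable components of~$Ay+A_{\rm rc}(t)y+\NN(t,y)$ supported on~$E_\M$, while on the complement~$E_\M^\perp$ the operator~$A$ is coercive with gap~$\alpha_{M+1}$, furnishing dissipation strong enough to absorb the polynomially growing nonlinear terms after interpolation and Sobolev estimates. Concretely I would set up a Lyapunov functional equivalent to~$\norm{y(t)}{V}^2$, differentiate it along~\eqref{sys_FeedKy-N}, and use~$\alpha_{M+1}-\Theta_M>0$ to obtain a differential inequality yielding exponential decay \emph{as long as}~$\norm{y(t)}{V}$ stays below the threshold fixed by~$R$; a continuation/bootstrap argument then closes the loop by showing the solution never leaves that ball, thereby simultaneously excluding blow-up and delivering the exponential rate. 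For the present corollary all of this is packaged inside the single-pair theorem, so that only the elementary dominance~$\alpha_{M+1}\to+\infty>\Theta_\ast$ remains to be verified.
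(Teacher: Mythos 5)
Your overall route is the paper's: Theorem~\ref{T:N.intro} is obtained as a corollary of the single-pair result (Theorem~\ref{T:main.1pair} under Assumption~\ref{A:suffalpha}), by showing that the nonlinear spectral-gap condition~\eqref{suffalpha} holds once~$M$ is large enough, with the threshold~$M_\ast$ depending on~$\norm{y_0}{V}$; this is exactly what Section~\ref{sS:proofT:N.goal} does, and your sketch of the single-pair mechanism (splitting along~$E_\M$ and~$E_\M^\perp$, an energy inequality for the~$V$-norm, and a comparison/continuation argument excluding blow-up) matches the proof via~\eqref{sys-split-qQ} and Lemma~\ref{L:odeh}.

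The gap is in your central claim that the quantity~$\Theta_M$ on the right-hand side of the gap condition is bounded \emph{uniformly in~$M$} because ``every place where~$M$ enters~$\Theta_M$ does so only through the projection norm.'' That is false in general. In~\eqref{suffalpha} the terms~$\mathfrak a_2\mathfrak q$ and~$\norm{\mathfrak h}{L^{\mathfrak r}(\R_0,\R)}$ involve~$\norm{q(0)}{\D(A)}$ with~$q(0)=P_{E_\M}y_0$, and the only available bound is~$\norm{q(0)}{\D(A)}\le\alpha_{M}^{1/2}\norm{q(0)}{V}$; hence, whenever some~$\zeta_{2j}>0$ or~$\beta_2>0$ (e.g.\ the 3D reaction terms with~$r_j>3$), the quantity~$\Theta_M$ genuinely grows with~$M$, like a power of~$\alpha_{M}$ --- see~\eqref{frak-apr} and~\eqref{normhLr}. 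The correct statement is not ``$\Theta_M\le\Theta_\ast<\alpha_{M+1}$ eventually'' but ``$\Theta_M/\alpha_{M+1}\to0$'', and proving the latter is precisely where the exponent bookkeeping of Assumptions~\ref{A:NN} and~\ref{A:FFM} enters: one needs~$\|\frac{\zeta_2}{1-\delta_2}\|<1$ and~$\max\{\beta_2\mathfrak p,\mathfrak r\eta_2\|\zeta_{2}+\delta_{2}\|\mathfrak p\}<1-\|\frac{\zeta_2}{1-\delta_2}\|$, together with the boundedness of the ratio~$\alpha_{\M_\sigma}/\alpha_{\M_{\sigma+}}$ (automatic here, being~$\le1$ for the ordering~$\M=\{1,\dots,M\}$); see~\eqref{suffalpha-bddM3}. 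This is also why the theorem singles out~$\FF=A+\lambda\Id$, which gives~$(\beta_1,\beta_2)=(\eta_1,\eta_2)=(1,0)$ so that these inequalities hold, whereas your argument, taken at face value, would apply equally to~$\FF=\lambda\Id$, for which the paper explicitly does not claim the result. To close the gap you must replace the uniform-boundedness claim by the sublinear-growth estimate~$\Theta_M=o(\alpha_{M+1})$.
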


The operator choice~$\FF(y)=\lambda \Id y$, used in previous works for linear systems,
will not necessarily satisfy the assumptions hereafter (Assumption~\ref{A:FFM}, in particular). That is, we cannot
conclude/guarantee (from our results) that such choice will {\em semiglobally} stabilize the nonlinear system. 
To better understand the differences between the two choices, we will consider a general
operator~$\FF(y)=\FF_\M(P_{E_\M}y)$ depending only on the orthogonal projection~$P_{E_\M}y$ of the state~$y$ in~$H$
onto~$E_{\M}$. Further $\FF_\M\colon E_\M\to E_\M$ is a continuous operator. Notice that, with~$\varsigma\in\{0,1\}$ 
we have that $P_{E_\M}(\varsigma A+\lambda\Id) P_{E_\M}$ is continuous, and the feedback in~\eqref{FeedKy-N}
satisfies~$\KK_{U_M}^{\varsigma A+\lambda\Id,\M,\NN}=\KK_{U_M}^{P_{E_\M}(\varsigma A+\lambda\Id) P_{E_\M},\M,\NN}$,
because ~$P_{E_\M}$ commutes with both~$A$ and~$\Id$
and because
$P_{U_M}^{E_\M^\perp}P_{E_\M}=P_{U_M}^{E_\M^\perp}$.
Notice also that when~$\FF$ is linear and~$\NN\ne0$, then~$y\mapsto \KK_{U_M}^{\FF,\M}(t,y)$ is linear, while
$y\mapsto \KK_{U_M}^{\FF,\M,\NN}(t,y)$ is nonlinear.

%%%%%%%%%%%%%%%%%%%%%%%%%%%%%%%%%%%%%%%
\subsection{Motivation and short comparison to previous works} We find systems in form~\eqref{sys-y} when, for example,
we want to stabilize a system
to a trajectory~$\hat z$. That is,
suppose~$\hat z$ solves the nonlinear system
\[
 \dot{\hat z}+A{\hat z}+f({\hat z})=0,\qquad \hat z(0)=\hat z_0,
\]
and that~$\hat z$ has suitable desired properties (e.g., it is essentially bounded and regular).
In many situations, it may happen that the solution issued from a different initial condition~$z_0$ 
may present a nondesired behavior
(e.g., not remaining bounded, or even blowing up in finite time). In such situation,
we would like to find a control $\mathbf u(t)=\sum_{i=1}^M u_i(t)\Psi_i$, such that the solution
of
\begin{equation}\label{sys-zeta}
  \dot z+Az+f(z)+\mathbf u=0,\qquad z(0)=z_0,
\end{equation}
approaches the desired behavior $\hat z$. More precisely, we would like to have
\begin{equation}\label{goalz}
 \norm{z(t)-\hat z(t)}{\mathfrak H}\le C\ex^{-\mu t}\norm{z(0)-\hat z(0)}{\mathfrak H}, 
\end{equation}
for some normed space~$\mathfrak H$. Now we observe that the difference~$y\coloneqq z-\hat z$
satisfies a dynamics as~\eqref{sys-y}, because
from Taylor expansion (for regular enough~$f$)
we may write $f(z)-f(\hat z)\eqqcolon A_{\rm r,c}(t)y+\NN(t,y)$, with~$A_{\rm r,c}(t)=\frac{\ed}{\ed z}f(\hat z)$ and with a
remainder~$\NN(t,y)$. Notice that $\NN$ vanishes if, and only if,~$f$
is affine, otherwise~$\NN(t,y)$ is nonlinear.
Therefore, stabilizing~\eqref{sys-zeta} to the targeted trajectory,
is equivalent to stabilizing system~\eqref{sys-y} (to zero), because
\eqref{goalz} reads~$\norm{y(t)}{\mathfrak H}\le C\ex^{-\mu t}\norm{y(0)}{\mathfrak H}$.

In previous works on internal stabilization of nonautonomous parabolic-like systems
including~\cite{BarRodShi11,KroRod15,KroRod-ecc15,BreKunRod17,PhanRod18-mcss}, the exact null controllability of the
corresponding linearized systems
(by means of infinite dimensional controls, see~\cite{CoronNguyen17,DuyckZhangZua08,FerGonGuePuel06,FerGueImaPuel04,FurIma99,Ima01,Yama09})
played a key role in the proof of the existence of a stabilizing control.
See also~\cite{Ammari-Duyck-Shi16} for the weakly damped wave equation. We would like to underline that for
the proof of the stability of an oblique projection based closed-loop system, we do not 
need to assume the above null controllability result.

Our results are also true for the particular case of autonomous systems, which has been extensively studied. However, in such case
other tools may be, and have been, used. Among such tools we have the spectral properties of the  system operator~$A +A_{\rm rc}$. We refer to the
works~\cite{Nambu82,RaymThev10,BadTakah11,Barbu12,BarbuLasTri06,BarbuTri04,Barbu_TAC13,HalanayMureaSafta13,Munteanu17,ChowdhuryErvedoza19} and references therein.
See also the comments in~\cite[Sect.~6.5]{KunRod18-cocv}.
Finally we refer to the examples in~\cite{Wu74}, showing that in the nonautonomous case, the spectral
properties of~$A +A_{\rm rc}(t)$, at each time~$t\ge0$, are 
not appropriate for studying the stability
of the corresponding nonautonomous system.

Though we do not deal here with boundary controls,
we refer to~\cite{NgomSeneLeRoux15,Raymond07,Rod18} for works on the stabilization of the
Navier--Stokes equation, evolving in a bounded domain~$\Omega\subset\R^3$, to a targeted trajectory. In~\cite{NgomSeneLeRoux15,Raymond07} the targeted trajectory
is independent of time (autonomous case), while in~\cite{Rod18} it is time-dependent (nonautonomous case).
In~\cite{NgomSeneLeRoux15} the {\em global} stability of the closed-loop is shown to hold in~$L^2$-norm for
{\em at least one } (not necessarily unique) appropriately defined ``weak'' solution.
In~\cite{Raymond07} the {\em local} stability of 
the closed-loop system has been shown to hold in the Sobolev~$W^{s,2}$-norm, with~$s\in(\frac{1}{2},1]$, and
the solutions of the closed-loop
system are more regular and unique. In~\cite{Rod18} the {\em local} stability of 
the closed-loop system has been shown to hold in the~$W^{1,2}$-norm and the solutions are unique. Recall that
$L^2=W^{0,2}\supset W^{s_1,2}\supset W^{s_2,2}$, for~$0<s_1<s_2$.

Our results can be used to conclude the {\em semiglobal} stability of nonautonomous oblique projection based closed-loop
parabolic-like systems with internal controls,
where {\em semiglobal} stability lies
between {\em local} and {\em global} 
stability. The stability of the closed-loop system is shown to hold in the~$W^{1,2}$-norm, and the solutions are unique.
In previous results concerning {\em local} stability of parabolic systems, the control domain~$\omega$
can be arbitrary and fixed a priori. For our
results the volume of the support of the actuators can still be arbitrarily small and fixed a priori,
but the support itself is not fixed a priori. See Section~\ref{sS:exRect}.

Finally, though we consider here the case of parabolic-like systems and are particularly interested
in the case where blow up may occur for the free dynamics and on the case our control is finite dimensional, 
the stabilization problem is still an interesting problem for other types of evolution equations, where blow up does not occur,
like those conserving the energy and/or other quantities. For stabilization results
(by means of infinite-dimensional control) for nonparabolic-like systems we refer the reader
to~\cite{RussellZhang96,AzmiKunisch18,LaurentLinaresRosier15,Russell78} and references therein.

%%%%%%%%%%%%%%%%%%%%%%%%%%%%%%%%%%%%%%%
\subsection{Computational advantage}
We underline that the feedback operators in~\eqref{FeedKy} and~\eqref{FeedKy-N} are explicit and the essential step in their
practical realization involves the computation of the oblique projection. A classical approach to find a feedback stabilizing control
is to compute the solution of the Hamilton--Jacobi--Bellman equation, which is known to be a difficult numerical task,
being related with the so-called ``curse of dimensionality'', for example see the recent paper~\cite{KaliseKunisch18}
(for the autonomous case), where the authors, in order to compute the Hamilton--Jacobi--Bellman feedback, need to
approximate a parabolic equation by a 14-dimensional
ordinary differential equation (previous works deal with even lower-dimensional approximations).
This also means that standard discretization
methods as finite elements approximations are not appropriate for computing the Hamilton--Jacobi--Bellman solution, because a
14-dimensional finite elements approximation of a parabolic equation is hardly accurate enough.
In the linear case (and with quadratic cost) the Hamilton--Jacobi--Bellman feedback reduces
to the (algebraic) Riccati feedback. In this case finite elements approximations can be used, but the computational
effort increases considerably as we increase the number of
degrees of freedom.
For parabolic systems, the computation of the feedback in~\eqref{FeedKy} and in~\eqref{FeedKy-N} is considerably
cheaper, because the numerical computation of the oblique projection~$P_{U_M}^{E_\M^\perp}$ amounts to the
computation of the $M$~eigenfunctions~$\{e_i\mid i\in\M\}$, and the computation of the inverse of the
matrix~$\Theta_\M=\begin{bmatrix}
                   (E_\M,U_M)_{H}
                  \end{bmatrix}=\begin{bmatrix}
                   (e_i,\Phi_j)_{L^2}
                  \end{bmatrix}\in\R^{M\times M}
$, see~\cite{RodSturm18}. Note that the size of~$\Theta_\M$ is defined by the number~$M$ of actuators, 
and thus it is independent of
the number of degrees of freedom of the space discretization, that is, computing $\Theta_\M^{-1}$ does 
not become a harder task as we
refine our discretization.

Even in case we are able to compute an approximation of an Hamilton--Jacobi--Bellman based feedback control, such 
(approximated) feedback may not
guarantee stabilization for arbitrary initial conditions, as reported in~\cite[Sect.~5.2, Test~2]{KaliseKunisch18},
though we likely obtain a
neighborhood of attraction larger than that
of the Riccati closed-loop system.

Finally, the main idea behind solving the Riccati or
Hamilton--Jacobi--Bellman equations is that of finding a feedback (closed-loop) stabilizing control
or an optimal control, under the assumption/knowledge that a stabilizing (open-loop) control does exist.
Instead, in this paper,
the proof of existence of such a stabilizing control is included in the results.

%%%%%%%%%%%%%%%%%%%%%%%%%%%%%%%%%%%%%%%
\subsection{Contents and general notation}
The rest of the paper is organized as follows. In  Section~\ref{S:prelim} we recall suitable properties of oblique
projections, present an example of application of our results, and recall previous
global and local exponential stability results, which are related to the problem we address in this manuscript. 
In  Section~\ref{S:assump} we introduce the general properties asked for the
operators~$A$, $A_{\rm rc}$, and~$\NN$ in~\eqref{sys-y}, and also the properties asked for the triple~$(U_M,E_\M,\FF)$ defining
the feedback operator.
In  Section~\ref{S:feedback} we prove our main result. 
In  Section~\ref{S:examples} we show that our results can be applied to the stabilization of semilinear
parabolic equations with polynomial nonlinearities. 
In  Section~\ref{S:simul} we present the results of numerical simulations showing the
performance of the proposed nonlinear feedback. Finally,
the appendix gathers proofs of auxiliary results used in the main text.

\medskip

Concerning the notation, we write~$\R$ and~$\mathbb N$ for the sets of real numbers and nonnegative
integers, respectively, and we define $\R_r\coloneqq(r,\,+\infty)$ and~$\overline{\R_r}\coloneqq[r,\,+\infty)$, for $r\in\R$, and $\mathbb
N_0\coloneqq\mathbb N\setminus\{0\}$.

For an open interval $I\subseteq\R$ and two Banach spaces~$X,\,Y$, we write
$W(I,\,X,\,Y)\coloneqq\{y\in L^2(I,\,X)\mid \dot y\in L^2(I,\,Y)\}$,
where~$\dot y\coloneqq\frac{\ed}{\ed t}y$ is taken in the sense of
distributions. This space is endowed with the natural norm
$|y|_{W(I,\,X,\,Y)}\coloneqq\bigl(|y|_{L^2(I,\,X)}^2+|\dot y|_{L^2(I,\,Y)}^2\bigr)^{1/2}$.
In the case~$X=Y$ we write $H^1(I,\,X)\coloneqq W(I,\,X,\,X)$.

If the inclusions $X\subseteq Z$ and~$Y\subseteq Z$ are continuous, where~$Z$ is a Hausdorff topological space,
then we can define the Banach spaces $X\times Y$, $X\cap Y$, and $X+Y$,
endowed with the norms defined as
$|(a,\,b)|_{X\times Y}:=\bigl(|a|_{X}^2+|b|_{Y}^2\bigr)^{\frac{1}{2}}$,
$|a|_{X\cap Y}:=|(a,\,a)|_{X\times Y}$, and
$|a|_{X+Y}:=\inf_{(a^X,\,a^Y)\in X\times Y}\bigl\{|(a^X,\,a^Y)|_{X\times Y}\mid a=a^X+a^Y\bigr\}$,
respectively.
In case we know that $X\cap Y=\{0\}$, we say that $X+Y$ is a direct sum and we write $X\oplus Y$ instead.

If the inclusion
$X\subseteq Y$ is continuous, we write $X\xhookrightarrow{} Y$. We write
$X\xhookrightarrow{\rm d} Y$, respectively $X\xhookrightarrow{\rm c} Y$, if the inclusion is also dense, respectively compact.

The space of continuous linear mappings from~$X$ into~$Y$ will be denoted by~$\LL(X,Y)$. In case~$X=Y$ we 
write~$\LL(X)\coloneqq\LL(X,X)$.
The continuous dual of~$X$ is denoted~$X'\coloneqq\LL(X,\R)$.

The space of continuous functions from~$X$ into~$Y$ is denoted~$C(X,Y)$.
We consider the subspace of
increasing continuous functions, defined in~$\overline{\R_0}$ and vanishing at~$0$:
%\begin{subequations}%\label{space-C}
\begin{equation*}
 C_{0,\rm i}(\overline{\R_0},\R) \coloneqq \{\mathfrak n\mid \mathfrak n\in C(\overline{\R_0},\R),
 \quad \mathfrak n(0)=0,\quad\mbox{and}\quad
 \mathfrak n(\varkappa_2)\ge\mathfrak n(\varkappa_1)\;\mbox{ if }\; \varkappa_2\ge \varkappa_1\ge0\}.
\end{equation*}
Next,  we denote by~$\CC_{\rm b, i}(X, Y)$ the vector subspace 
\begin{equation*}
 \CC_{\rm b, i}(X, Y)\coloneqq
 \left\{f\in C(X,Y) \mid \exists\mathfrak n\in C_{0,\rm i}(\overline{\R_0},\R)\;\forall x\in X:\;
\norm{f(x)}{Y}\le \mathfrak n (\norm{x}{X})
\right\}.
\end{equation*} 
%\end{subequations} 

Given a subset~$S\subset H$ of a Hilbert space~$H$, with scalar product~$(\Bigcdot,\Bigcdot)_H$, the orthogonal complement of~$S$ is
denoted~$S^\perp\coloneqq\{h\in H\mid (h,s)_H=0\mbox{ for all }s\in S\}$.

Given a sequence~$(a_j)_{j\in\{1,2,\dots,n\}}$ of real constants, $n\in\N_0$, $a_i\ge0$, we
denote~$\|a\|\coloneqq\max\limits_{1\le j\le n} a_j$. Further, by
$\overline C_{\left[a_1,\dots,a_n\right]}$ we denote a nonnegative function that
increases in each of its nonnegative arguments.

Finally, $C,\,C_i$, $i=0,\,1,\,\dots$, stand for unessential positive constants.

%%%%%%%%%%%%%%%%%%%%%%%%%%%%%%%%%%%%%%%
%%%%%%%%%%%%%%%%%%%%%%%%%%%%%%%%%%%%%%%
\section{Preliminaries}\label{S:prelim}
We introduce/recall here specific notation and terminology concerning oblique projections and stability.

%%%%%%%%%%%%%%%%%%%%%%%%%%%%%%%%%%%%%%%
\subsection{Actuators and eigenfunctions}

In the stability condition~\eqref{suffalpha.lin}, as we increase~$M\in\N_0$ we have sequences of subspaces
\begin{align}\label{seqM-11}
 E_{\{1\}}, \; E_{\{1,2\}}, \; E_{\{1,2,3\}}, \;...\qquad\mbox{and}\qquad U_{1}, \; U_{2}, \; U_{3},\;...
 \end{align}
where the $M$th term of each sequence is an~$M$-dimensional space, $\dim E_{\M}=M=\dim U_{M}$.

Motivated by the results in~\cite[Sect.~4.8]{KunRod18-cocv} (see also~\cite[Rem.~3.9]{KunRod18-cocv}), in order to prove the
boundedness of the norm~$\norm{P_{{U_M}}^{E_\M^{\perp}}}{\LL(H)}\le C_P$, uniformly on~$M$, it may be
convenient to consider different sequences.

To simplify the exposition, we denote by~$\#Z\in\N$ the number of elements of a given
finite set~$Z\subseteq Y$. See~\cite[Sect.~13]{Halmos74}. For~$N\in\N_0$, ~$\#Z=N$ simply means that there exists a
one-to-one correspondence from $\{1,2,\dots,N\}$ onto $Z$. Of course~$\#Z=0$ means that $Z=\emptyset$, the empty set.
We also denote the collection 
\begin{equation*}
 \mathfrak P_N(Y)=\{Z\subseteq Y\mid \#Z=N\}.
\end{equation*}

Now, instead of~\eqref{seqM-11}, we consider a more general sequence as follows
\begin{align}%\label{seqM-rs}
 E_{\{\sigma^1_1,\sigma^1_2,\dots,\sigma^1_{|\sigma^{1}|}\}}, \;
 E_{\{\sigma^2_1,\sigma^2_2,\dots,\sigma^2_{|\sigma^{2}|}\}}, \;
 E_{\{\sigma^3_1,\sigma^3_2,\dots,\sigma^3_{|\sigma^{3}|}\}}, \; ...,
 \quad\mbox{and}\quad U_{|\sigma^{1}|}, \; U_{|\sigma^{2}|}, \; U_{|\sigma^{3}|},\;...\notag
 \end{align}
 that is, denoting~$\M_\sigma\coloneqq \{\sigma^M_1,\sigma^M_2,\dots,\sigma^M_{|\sigma^{M}|}\}$, we have~$\#\M_\sigma=|\sigma^{M}|$ and the sequences
 \begin{align}\label{seqM-rs}
 E_{\M_\sigma}\coloneqq\linspan\{e_i\mid i\in\M_\sigma\} 
 \qquad\mbox{and}\qquad U_{\#\M_\sigma},\qquad\mbox{with}\qquad M\in\N_0,
 \end{align}
where for each $M\in\N_0$, the $M$th term of each sequence is a~$\#\M_\sigma$-dimensional space,
$\dim E_{\M_\sigma}=\#\M_\sigma=\dim U_{\#\M_\sigma}$,
and the function~$\sigma^M\colon\{1,2,\dots,\,\#\M_\sigma\}\to\M_\sigma\in\mathfrak P_{\#\M_\sigma}(\N_0)$, $i\mapsto\sigma^M_i$, is a bijection.

For a given~$M\in\N_0$, we will also need to underline two particular eigenvalues 
defined as
\begin{equation}\label{VDAalphaM}
 \alpha_{\M_{\sigma}}\coloneqq \max\{\alpha_i\mid i\in \M_{\sigma}\}, \qquad \alpha_{\M_{\sigma+}}\coloneqq \min\{\alpha_i\mid i\notin \M_{\sigma}\}. 
\end{equation}

Notice that, the sequence~\eqref{seqM-11} is the particular case of~\eqref{seqM-rs}
where~$\sigma^M_i=i$, $\#\M_\sigma=M$, $\M_\sigma=\{1,\dots,M\}$, $\alpha_{\M_{\sigma}}=\alpha_{M}$, and~$\alpha_{\M_{\sigma+}}=\alpha_{M+1}$.

Essentially, the results in~\cite{KunRod18-cocv} tell us that
the linear closed-loop system
\begin{equation*}%\label{sys_FeedKyx}
      \dot y + Ay +A_{\rm rc}(t)y-\KKK_{U_{\#\M_\sigma}}^{\FF_{\M_\sigma}}(t,y)=0,\quad    y(0)=y_0\in H,
\end{equation*}
is {\em globally} exponentially stable, with the feedback control operator
\begin{equation}\label{FeedKyx}
 y\mapsto\KKK_{U_{\#\M_\sigma}}^{\FF_{\M_\sigma}}(t,y)\coloneqq
 P_{U_{\#\M_\sigma}}^{E_{\M_\sigma}^\perp}\left(Ay +A_{\rm rc}(t)y-\FF_{\M_\sigma}(P_{E_{\M_\sigma}}y)\right),
\end{equation}
with~$\FF_{\M_\sigma}\in\{\lambda\Id , A+\lambda\Id\}$, provided the condition
\begin{align}
 \overline\mu_M&\coloneqq\alpha_{\M_{\sigma+}}-\left(6+4\norm{P_{U_{\#\M_\sigma}}^{E_{\M_\sigma}^\perp}}{\LL(H)}^2\right)\norm{A_{\rm rc}}{L^\infty((0,+\infty),\LL(H,V'))}^2
 >0\label{suffalpha.linx}
 \end{align}
holds true, which is a slightly relaxed version of~\eqref{suffalpha.lin}. In case we also have
that~$\alpha_{\M_{\sigma+}}\to+\infty$ as $M\to+\infty$, then we also have Theorem~\ref{T:L.intro} with~$\alpha_{\M_{\sigma+}}$
and~$(U_{\#\M_\sigma},E_{\M_\sigma})$ in the roles
 of~$\alpha_{M+1}$ and~$(U_{M},E_{\M})$, respectively.

%%%%%%%%%%%%%%%%%%%%%%%%%%%%%%%%%%%%%%%
\subsection{Example of application}\label{sS:exRect}
We recall here that we can choose~$U_{\#\M_\sigma}$ and~${E_{\M_\sigma}^\perp}$ so that~\eqref{suffalpha.linx} is
satisfied for parabolic equations evolving in rectangular domains.
Let~$\M=\{1,2,\dots,M\}$ and~$r\in(0,1)$. For {1D} parabolic equations, evolving in a nonempty interval~$\Omega^1=(0,L_1)\subset\R$, we have that the norm of the projection
$P_{U_\M}^{E_\M^\perp}$ remains bounded if we take for the actuators the indicator functions~$1_{\omega_j^1}(x_1)$, $j\in\{1,2,\dots,M\}$,
defined as follows,
\begin{equation}\label{Act-mxe}
 1_{\omega_j^1}(x_1)\coloneqq\begin{cases}
                            1,&\mbox{if }x_1\in\Omega^1\textstyle\bigcap\omega_j^1,\\
                            0,&\mbox{if }x_1\in\Omega^1\setminus\overline{\omega_j^1},
                           \end{cases}
\quad \omega_j^1\coloneqq(c_j-\fractx{rL_1}{2M},c_j+\fractx{rL_1}{2M}),\quad c_j\coloneqq\fractx{(2j-1)L_1}{2M}.
\end{equation}
This boundedness result holds true for both Dirichlet and Neumann boundary conditions, see~\cite[Thms.~4.4 and~5.2]{RodSturm18}. Then
from~\cite[Sect.~4.8]{KunRod18-cocv} we also know that for nonempty rectangular
domains~$\Omega^\times=\prod\limits_{n=1}^{\mathbf d}(0,L_n)\subset\R^{\mathbf d}$ the operator norm of
the projections~$P_{U_{\#\M_\sigma}}^{E_{\M_\sigma}^\perp}$ remains bounded, if we take~$\#\M_\sigma=M^{\mathbf d}$, and the
cartesian product actuators and eigenfunctions as follows
\begin{align*}
U_{\#\M_\sigma}=\linspan\{1_{\omega_{\mathbf j}^\times}\mid {\mathbf j}\in\M^{\mathbf d}\}&\quad\mbox{and}\quad
E_{\M_\sigma}=\linspan\{e_{\mathbf j}^\times\mid {\mathbf j}\in\M^{\mathbf d}\},
\end{align*}
with $\omega_{\bf j}^\times\coloneqq\{(x_1,x_2,\dots,x_{\mathbf d})\in\Omega^\times\mid x_n\in\omega_{\mathbf j_n}^n\}$
and $e_{\mathbf j}^\times(x_1,x_2,\dots,x_{\mathbf d})\coloneqq\textstyle\prod\limits_{n=1}^{\mathbf d}e_{\mathbf j_n}(x_n)$.
Notice that we can also write~$1_{\omega_{\mathbf j}^\times}=\textstyle\prod\limits_{n=1}^{\mathbf d}1_{\omega_{\mathbf j_n}^n}(x_n)$, and after ordering
the eigenpairs~$(\alpha_i,e_i)$ of~$-\Delta+\Id$ in~$\Omega^\times$, we
can find~$\sigma^M$ so that~$\{e_i\mid i\in \M_\sigma\}=\{e_{\mathbf j}^\times\mid{\mathbf j}\in\M^{\mathbf d}\}$, roughly
 speaking~$\M_\sigma\sim\M^{\mathbf d}$.
Furthermore, the total volume covered by the actuators 
is given by
$r^{\mathbf d} \mathop{\rm vol}\left(\Omega^\times\right)=\textstyle\prod\limits_{n=1}^{\mathbf d}rL_n$.
That is, the total volume covered by the actuators can be
fixed a priori and taken arbitrarily small. However, for smaller~$r$ we may need a 
larger number~$M$ of actuators, because the norm
of~$P_{U_{\#\M_\sigma}}^{E_{\M_\sigma}^\perp}$ will increase as~$r$ decreases, see~\cite[Sect.~4.8.1]{KunRod18-cocv}
and~\cite[Thms.~4.4 and~5.2]{RodSturm18}.

Observe that we have $\alpha_{\M_{\sigma+}}\ge\pi^2(\frac{1^2({\mathbf d}-1)+(M+1)^2}{\overline L^2})+1$ 
under Dirichlet boundary conditions, and
~$\alpha_{\M_{\sigma+}}\ge\pi^2(\frac{0^2({\mathbf d}-1)+M^2}{\overline L^2})+1$
under Neumann boundary conditions, where~$\overline L=\max\{L_j\mid1\le j\le n\}$, which implies
that in either case~$\alpha_{\M_{\sigma+}}\to+\infty$ as ~$M\to+\infty$, and so
condition~\eqref{suffalpha.linx} will be satisfied for large enough~$M$.
Recall that~$e_i^n(x_n)=(\frac{2}{L_n})^{\frac12}\sin(\frac{i\pi x_n}{L_n})$, $i\ge1$,
under Dirichlet boundary conditions,
and~$e_1^n(x_n)=(\frac{1}{L_n})^{\frac12}$ and~$e_i^n(x_n)=(\frac{2}{L_n})^{\frac12}\cos(\frac{(i-1)\pi x_n}{L_n})$, $i\ge2$,
under Neumann boundary conditions.

For nonrectangular domains~$\Omega\subset\R^{\mathbf d}$, with~${\mathbf d}\ge2$, we do not
know whether we can choose the actuators (as indicator functions) so that~\eqref{suffalpha.linx}
is satisfied (again, in case the total volume of actuators is fixed a priori and arbitrarily small).
This is an interesting open question. Numerical simulations in~\cite{KunRod18-cocv} and~\cite{KunRod-pp18} show
the stabilizing performance of a linear feedback~$\KK_{U_M}^{\lambda\Id,\M}$ in
a nonrectangular domain.

\begin{remark}\label{R:frac-eigs}
 For the nonlinear systems, to derive the semiglobal stability result hereafter we will also
 need that~$\frac{\alpha_{\M_\sigma}}{\alpha_{\M_{\sigma+}}}$ remains bounded. 
 This is again satisfied for the choice above for rectangular domains. Indeed, under Dirichlet boundary conditions we have
$\alpha_{\M_\sigma}\le{\mathbf d}\pi^2(\frac{M^2}{\underline L^2})+1$, where~$\underline L=\min\{L_j\mid 1\le j\le n\}$, which implies
 ~$\frac{\alpha_{\M_\sigma}}{\alpha_{\M_{\sigma+}}}
 \le\frac{{\mathbf d}\pi^2\frac{M^2}{\underline L^2}+1}{\pi^2\frac{(M+1)^2}{\overline L^2}+1}
 ={\mathbf d}\frac{\overline L^2}{\underline L^2}\frac{\pi^2M^2+\underline L^2}{\pi^2(M+1)^2+\overline L^2}$.
 Analogously, under Neumann boundary conditions we have
 $\alpha_{\M_\sigma}\le{\mathbf d}\pi^2(\frac{(M-1)^2}{\underline L^2})+1$ and
 ~$\frac{\alpha_{\M_\sigma}}{\alpha_{\M_{\sigma+}}}
 \le\frac{{\mathbf d}\pi^2\frac{(M-1)^2}{\underline L^2}+1}{\pi^2\frac{M^2}{\overline L^2}+1}
 ={\mathbf d}\frac{\overline L^2}{\underline L^2}\frac{\pi^2(M-1)^2+\underline L^2}{\pi^2M^2+\overline L^2}$.
 That is, for either boundary conditions we have~$\lim\limits_{M\to+\infty}\frac{\alpha_{\M_\sigma}}{\alpha_{\M_{\sigma+}}}
 \le{\mathbf d}\frac{\overline L^2}{\underline L^2}$, which implies
  that~$\frac{\alpha_{\M_\sigma}}{\alpha_{\M_{\sigma+}}}\le\Lambda$ for a suitable~$\Lambda>0$ independent of~$M$.
\end{remark}

%%%%%%%%%%%%%%%%%%%%%%%%%%%%%%%%%%%%%%%
\subsection{Global, local, and semiglobal exponential stability}
We  recall 3 different exponential stability concepts, in order to better explain the result.
Let~$\mathfrak K\ge1$, $\mathfrak l>0$, and let~$\mathfrak H$
 be a normed space. Let us consider the dynamics in~\eqref{sys-y},
\begin{equation}\label{sys_F}
      \dot y +  Ay +A_{\rm rc}(t)y+\NN(t,y)-\mathfrak F(t,y)=0,\quad y(0)=y_0,\qquad t\ge0.
\end{equation}
with a general feedback control operator~$\mathfrak F$ taken from a suitable class~$\mathbb F$.
\begin{definition}\label{D:expstab.G}
 Let us fix~$\mathfrak F\in\mathbb F$. We say that system~\eqref{sys_F}
 is {\em globally} $(\mathfrak F,\mathfrak K,\mathfrak l,\mathfrak H)$-exponentially stable if for arbitrary given~$y_0\in\mathfrak H$, the
  corresponding solution~$y_\mathfrak F$
 is defined for all~$t\ge0$ and satisfies
 $
 \norm{y_\mathfrak F(t)}{\mathfrak H}^2\le \mathfrak K\ex^{-\mathfrak l t}\norm{y_0}{\mathfrak H}^2. 
 $
\end{definition}

\begin{definition}\label{D:expstab.L}
 Let us fix~$\mathfrak F\in\mathbb F$. We say that system~\eqref{sys_F}
 is {\em locally} $(\mathfrak F,\mathfrak K,\mathfrak l,\mathfrak H)$-exponentially stable if there exists~$\epsilon>0$, such that
 for arbitrary given~$y_0\in\mathfrak H$ with~$\norm{y_0}{\mathfrak H}<\epsilon$,  the corresponding solution~$y_\mathfrak F$
 is defined for all~$t\ge0$ and satisfies
  $
 \norm{y_\mathfrak F(t)}{\mathfrak H}^2\le \mathfrak K\ex^{-\mathfrak l t}\norm{y_0}{\mathfrak H}^2. 
 $
\end{definition}

\begin{definition}\label{D:expstab.SG}
 Let us be given a class of operators~$\mathbb F$. We say that~\eqref{sys_F}
 is {\em semiglobally} $(\mathbb F,\mathfrak H)$-exponentially stable if for arbitrary given~$R>0$,
 we can find~$\mathfrak F\in\mathbb F$, $\mathfrak K\ge1$, and~$\mathfrak l>0$, such that: for
 arbitrary given~$y_0\in\mathfrak H$ with~$\norm{y_0}{\mathfrak H}<R$, 
 the corresponding  solution~$y_{\mathfrak F}$
 is defined for all~$t\ge0$ and satisfies
  $
 \norm{y_{\mathfrak F}(t)}{\mathfrak H}^2\le \mathfrak K\ex^{-\mathfrak l t}\norm{y_0}{\mathfrak H}^2. 
 $
\end{definition}

We will consider system~\eqref{sys_F} evolving in a Hilbert~$H$, which will be considered as a pivot space, $H=H'$.
Let~$\D(A)\xhookrightarrow{\rm d,c} H$ be the domain of the diffusion-like operator, and denote 
$V\coloneqq\D(A^\frac12)\xhookrightarrow{\rm d,c} H$, and its dual by $V'$.
From the results in~\cite{KunRod18-cocv} we know that
 if~$\NN=0$ and~\eqref{suffalpha.linx} holds true, then 
there exist suitable constants~$C_1\ge1$, ~$\mu_1>0$, and~$M>0$ so that system~\eqref{sys_F} is
globally $(\KKK_{U_{\#\M_\sigma}}^{\FF_{\M_\sigma}},C_1,\mu_1,H)$-exponentially stable,
with~$\FF_{\M_\sigma}\in\{\lambda\Id ,A+\lambda\Id\}$.

Note that~$A_{\rm rc}\in L^\infty((0,+\infty),\LL(H,V'))$ is assumed in~\eqref{suffalpha.linx}.
If we (also) have that~$A_{\rm rc}\in L^\infty((0,+\infty),\LL(V,H))$, then we will (also) have strong solutions for system~\eqref{sys_F}
which will lead to the smoothing property
\[
 \norm{y(s+1)}{V}^2\le C_2\norm{y(s)}{H}^2,\quad\mbox{for all}\quad s\ge0,
\]
for a suitable constant~$C_2>0$, independent of~$s$. Hence, by standard estimates 
(e.g., following~\cite[Sect.~3]{PhanRod18-mcss}, see also~\cite[Sect.~4]{KunRod-pp18}),
we can conclude that
there is ~$C_3>0$ such that
system~\eqref{sys_F}, again with~$\NN=0$,
is again globally $(\KKK_{U_{\#\M_\sigma}}^{\FF_{\M_\sigma}},C_3,\mu_1,V)$-exponentially stable.

Afterwards, by a rather standard, still nontrivial, fixed point argument, we can derive that for a suitable constant~$C_4>0$, the perturbed system
\begin{equation}\label{sys-FeedKy-non}
      \dot y + Ay +A_{\rm rc}(t)y+\NN(t,y)-\KKK_{U_{\#\M_\sigma}}^{\FF_{\M_\sigma}}(t,y)=0,\quad    y(0)=y_0\in V,
\end{equation}
is locally $(\KKK_{U_{\#\M_\sigma}}^{\FF_{\M_\sigma}},C_4,\mu_1,V)$-exponentially stable, for a general class of nonlinearities~$\NN$.

Let us now consider the nonlinear feedback operator (cf.~\eqref{FeedKy-N}),
\begin{equation}\label{FeedKy-Nx}
 y\mapsto\KKK_{U_{\#\M_\sigma}}^{\FF_{\M_\sigma},\NN}(t,y)
 \coloneqq P_{U_{\#\M_\sigma}}^{E_{\M_\sigma}^\perp}\left(Ay +A_{\rm rc}(t)y+\NN(t,y)-\FF_{\M_\sigma}(P_{E_{\M_\sigma}}y)\right)
\end{equation}
and the class
\begin{equation}\label{F-family-N}
 \mathbb F\coloneqq\left\{\left.\KKK_{U_{\#\M_\sigma}}^{\FF_{\M_\sigma},\NN}\,\right|\begin{array}{l}
 \;M\in\N,\;U_{\#\M_\sigma}\mbox{ is a~$\#\M_\sigma$-dimensional subspace of }H,\\
 \M_\sigma\in\mathfrak{P}_{\#\M_\sigma}(\N_0),
 \mbox{ and }\FF_{\M_\sigma}\in \CC_{\rm b,i}(E_{\M_\sigma},E_{\M_\sigma})\end{array}\right\}.
\end{equation}

We will prove that the closed-loop system~\eqref{sys_F}
is semiglobally $(\mathbb F,V)$-exponentially stable,
% \begin{theorem}\label{MT:intro}
with~$\mathbb F$ as in~\eqref{F-family-N}  and under  general
conditions on the state operators~$A$, $A_{\rm rc}$,
and~$\NN$, in~\eqref{sys_F}, under  general conditions on~$\FF_{\M_\sigma}$, and
under a particular condition on the oblique projections~$P_{U_{\#\M_\sigma}}^{E_{\M_\sigma}^\perp}$, i.e., under
a suitable ``nonlinear version'' of condition~\eqref{suffalpha.linx} 
(see condition~\eqref{suffalpha} hereafter).
In other words, for arbitrary given~$R>0$ we want to find $M\in\N$, $\M_\sigma\in\mathfrak{P}_{\#\M_\sigma}(\N_0)$, and
a set of $\#\M_\sigma$~actuators spanning~$U_{\#\M_\sigma}$ such that the solution of
system~\eqref{sys_F} with~$\mathfrak F=\KKK_{U_{\#\M_\sigma}}^{\FF_{\M_\sigma},\NN}$
satisfies
\begin{equation}\label{goal.stab-N}
 \norm{y(t)}{V}^2\le C_5\ex^{-\mu_2 t}\norm{y_0}{V}^2,\quad\mbox{for all}\quad t\ge 0
 \quad\mbox{and all}\quad y_0\in \{v\in V\mid \norm{v}{V}<R\},
\end{equation}
with $(C_5,\mu_2,M)$ independent of~$y_0$. Note that here~$(C_5,\mu_2,M)$ may depend on~$R$, though.

The assumptions on the state operators, on the ``partial feedback''~$\FF_{\M_\sigma}$, and on the
oblique projection are given in the following sections.
Such assumptions will lead to the following relaxed/generalized version of Theorem~\ref{T:N.intro},
with~$\FF_{\M_\sigma}=A+\lambda\Id$, whose proof is given in
Section~\ref{sS:proofT:N.goal}.
\begin{theorem}\label{T:N.goal}
 Suppose we can construct a sequence~$(U_{\#\M_\sigma},E_{\M_\sigma})_{M\in N}$ so 
 that both the norm $\norm{P_{U_{\#\M_\sigma}}^{E_{\M_\sigma}^\perp}}{\LL(H)}\le C_P$
 and the ratio~$\frac{\alpha_{\M_\sigma}}{\alpha_{\M_{\sigma+}}}\le \Lambda$ remain
 bounded, with both~$C_P$ and~$\Lambda>0$
independent of~$M$.
Then, for arbitrary given~$R>0$ we can find $M\in\N$ large enough so that the solution of
system~\eqref{sys_F},
with~$\mathfrak F=\KKK_{U_{\#\M_\sigma}}^{A+\lambda\Id,\NN}$,
satisfies~\eqref{goal.stab-N},
with $(C_5,\mu_2,M)$ independent of~$y_0$. That is,
system~\eqref{sys_F} is semiglobally~$(\mathbb F,V)$-exponentially stable.
\end{theorem}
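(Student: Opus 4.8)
The plan is to obtain Theorem~\ref{T:N.goal} as a direct consequence of the single-pair nonlinear stability result announced in the excerpt, i.e.\ of the statement that whenever a pair~$(U_{\#\M_\sigma},E_{\M_\sigma})$ with~$\FF_{\M_\sigma}=A+\lambda\Id$ satisfies the ``nonlinear version''~\eqref{suffalpha} of~\eqref{suffalpha.linx}, the closed-loop system~\eqref{sys_F} with~$\mathfrak F=\KKK_{U_{\#\M_\sigma}}^{A+\lambda\Id,\NN}$ obeys the decay estimate~\eqref{goal.stab-N} for every~$y_0$ with~$\norm{y_0}{V}<R$. Granting that result, only a spectral verification remains: given~$R>0$, I must exhibit one index~$M$ in the constructed sequence for which~\eqref{suffalpha} holds, and then read off from it the constants~$(C_5,\mu_2)$. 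The essential observation is that the threshold that~\eqref{suffalpha} imposes on the gap~$\alpha_{\M_{\sigma+}}$ is a single finite quantity~$\ovlineC{R,C_P,\Lambda}$, increasing in~$R$, that depends on the fixed data of~$A_{\rm rc}$ and~$\NN$ (polynomial degree, coefficient norms), on~$\lambda$, and on the \emph{uniform} bounds~$C_P$ and~$\Lambda$, but not separately on~$M$; this $M$-independence is exactly what the hypotheses~$\norm{P_{U_{\#\M_\sigma}}^{E_{\M_\sigma}^\perp}}{\LL(H)}\le C_P$ and~$\frac{\alpha_{\M_\sigma}}{\alpha_{\M_{\sigma+}}}\le\Lambda$ buy us.

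The key spectral step is then to show~$\alpha_{\M_{\sigma+}}\to+\infty$ along the sequence. Since the dimensions~$\dim E_{\M_\sigma}=\#\M_\sigma$ increase without bound and any~$\#\M_\sigma$-element subset of~$\N_0$ has largest index at least~$\#\M_\sigma$, monotonicity of the eigenvalues gives~$\alpha_{\M_\sigma}\ge\alpha_{\#\M_\sigma}\to+\infty$; the assumed ratio bound then forces
\[
 \alpha_{\M_{\sigma+}}\ge\Lambda^{-1}\alpha_{\M_\sigma}\ge\Lambda^{-1}\alpha_{\#\M_\sigma}\to+\infty\qquad(M\to+\infty).
\]
Hence for all large enough~$M$ we have~$\alpha_{\M_{\sigma+}}>\ovlineC{R,C_P,\Lambda}$, so condition~\eqref{suffalpha} holds for that pair. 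Fixing such an~$M$ and applying the single-pair result produces~\eqref{goal.stab-N} with~$(C_5,\mu_2,M)$ determined by~$R$ (through the threshold) and independent of the particular~$y_0$ in the ball~$\{v\in V\mid\norm{v}{V}<R\}$; this is precisely semiglobal~$(\mathbb F,V)$-exponential stability in the sense of Definition~\ref{D:expstab.SG}. For rectangular domains both uniform bounds are available, by the estimates recalled in Section~\ref{sS:exRect} and Remark~\ref{R:frac-eigs}, so the hypotheses are not vacuous.

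I expect the genuine difficulty to lie not in this reduction but in the single-pair result it invokes, i.e.\ in proving that~\eqref{suffalpha} is truly \emph{sufficient}. Because no sign condition is placed on~$\NN$, the free dynamics may blow up in finite time, and the main obstacle will be a continuation/maximal-interval argument that simultaneously rules out finite-time blow-up of the closed-loop state \emph{and} yields exponential decay in~$V$. The approach I would take there is to differentiate a~$V$-energy, use the smoothing~$H\to V$ estimate and the large gap~$\alpha_{\M_{\sigma+}}$ to absorb the reaction-convection term and, crucially, the polynomial nonlinear contributions~$\NN(t,y)$ and~$P_{U_{\#\M_\sigma}}^{E_{\M_\sigma}^\perp}\NN(t,y)$ into the dissipation, on the maximal interval where~$\norm{y(t)}{V}$ stays below a bound governed by~$R$; the ratio bound~$\Lambda$ is what keeps the feedback term built from~$\FF_{\M_\sigma}=A+\lambda\Id$, which carries a factor comparable to~$\alpha_{\M_\sigma}$, controlled relative to the stabilizing gap~$\alpha_{\M_{\sigma+}}$. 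A bootstrap/contradiction argument on the maximal existence time then upgrades the local bound to a global one and closes the estimate for all~$t\ge0$.
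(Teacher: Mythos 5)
Your overall route --- reducing Theorem~\ref{T:N.goal} to the single-pair result (Theorem~\ref{T:main.1pair}) and then verifying that the spectral condition~\eqref{suffalpha} holds along the sequence for all large~$M$ --- is the same as the paper's, and your derivation of $\alpha_{\M_{\sigma+}}\ge\Lambda^{-1}\alpha_{\M_\sigma}\ge\Lambda^{-1}\alpha_{\#\M_\sigma}\to+\infty$ is correct (and slightly more explicit than what the paper writes).

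There is, however, a genuine gap at the pivotal step: you assert that the right-hand side of~\eqref{suffalpha} is a single finite threshold $\ovlineC{R,C_P,\Lambda}$ ``not depending separately on~$M$'', so that $\alpha_{\M_{\sigma+}}\to+\infty$ alone closes the argument. This is false in general. The quantities $\mathfrak q$ and $\norm{\mathfrak h}{L^{\mathfrak r}(\R_{0},\R)}$ appearing in~\eqref{frak-apr} involve $\norm{q(0)}{\D(A)}$, and since $y_0$ is only assumed to lie in~$V$, the only available bound is $\norm{q(0)}{\D(A)}\le\alpha_{\M_\sigma}^{1/2}\norm{q(0)}{V}$; hence the right-hand side of~\eqref{suffalpha} grows with~$M$ like a power of~$\alpha_{\M_\sigma}$ whenever some $\zeta_{2j}>0$ (e.g., reaction exponents $r_j>3$ or convection terms in 3D), cf.~\eqref{normhLr}. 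The actual content of the paper's proof is to show that this growth is $O(\alpha_{\M_\sigma}^{\theta})$ with $\theta<1$ --- which is exactly what the exponent restrictions in Assumption~\ref{A:FFM} ($\|\fractx{\zeta_2}{1-\delta_2}\|<1$ together with the smallness conditions on $\beta_2\mathfrak p$ and on $\mathfrak r\eta_2\|\zeta_2+\delta_2\|\mathfrak p$) encode, and the reason the theorem is stated with $\FF_{\M_\sigma}=A+\lambda\Id$ (allowing $\beta_2=\eta_2=0$) rather than $\lambda\Id$ --- and then to invoke the ratio bound $\fractx{\alpha_{\M_\sigma}}{\alpha_{\M_{\sigma+}}}\le\Lambda$ to conclude that the quotient of the right-hand side of~\eqref{suffalpha} by~$\alpha_{\M_{\sigma+}}$ tends to zero, as carried out in~\eqref{suffalpha-bddM3}. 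You bring in the role of~$\Lambda$ and of the $\alpha_{\M_\sigma}$-factors only in your sketch of the (deferred) single-pair proof; in the reduction itself, where this is actually needed, your argument as written does not close.
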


%%%%%%%%%%%%%%%%%%%%%%%%%%%%%%%%%%%%%%%
%%%%%%%%%%%%%%%%%%%%%%%%%%%%%%%%%%%%%%%
\section{Assumptions and mathematical setting}\label{S:assump}
Here we present the mathematical setting and the sufficient conditions for stability of the closed-loop system.

%%%%%%%%%%%%%%%%%%%%%%%%%%%%%%%%%%%%%%%
\subsection{Assumptions on the state operators}\label{sS:assumOp}
Let~$H$ and~$V$ be separable Hilbert spaces, with~$V\subseteq H$. We will consider~$H$ as pivot space, $H'=H$.

\begin{assumption}\label{A:A0sp}
 $A\in\LL(V,V')$ is an isomorphism from~$V$ onto~$V'$, $A$ is symmetric, and $(y,z)\mapsto\langle Ay,z\rangle_{V',V}$
 is a complete scalar product on~$V.$
\end{assumption}

From now on we suppose that~$V$ is endowed with the scalar product~$(y,z)_V\coloneqq\langle Ay,z\rangle_{V',V}$,
which still makes~$V$ a Hilbert space. 
Therefore, $A\colon V\to V'$ is an isometry.
\begin{assumption}\label{A:A0cdc}
The inclusion $V\subseteq H$ is continuous, dense, and compact. %$V\xhookrightarrow{\rm d,c} H$.
\end{assumption}

Necessarily, we have that
the operator $A$ is densely defined in~$H$, with domain $\D(A)\coloneqq\{u\in V\mid Au\in H\}$ endowed with the scalar product
$(y,z)_{\D(A)}\coloneqq (Ay,Az)_H$, and the inclusions
\[
\D(A)\xhookrightarrow{\rm d,c} V\xhookrightarrow{\rm d,c} H\xhookrightarrow{\rm d,c} V'\xhookrightarrow{\rm d,c}\D(A)'.
\]
Further,~$A$ has compact inverse~$A^{-1}\colon H\to \D(A)$, and we can find a nondecreasing
system of (repeated) eigenvalues $(\alpha_i)_{i\in\N_0}$ and a corresponding complete basis of
eigenfunctions $(e_i)_{i\in\N_0}$:
\begin{equation*}
 0<\alpha_1\le\alpha_2\le\dots\le\alpha_i\le\alpha_{i+1}\to+\infty \quad\mbox{and}\quad Ae_i=\alpha_i e_i.
\end{equation*}

For every $\beta\in\R$, the power~$A^\beta$ of~$A$ is defined by
\[
A^\beta \sum_{i=1}^{+\infty}y_ie_i\coloneqq\sum_{i=1}^{+\infty}\alpha_i^\beta y_i e_i, 
\]
and the corresponding domains~$\D(A^{|\beta|})\coloneqq\{y\in H\mid A^{|\beta|} y\in H\}$, and 
$\D(A^{-|\beta|})\coloneqq \D(A^{|\beta|})'$.
We have~$\D(A^{\beta})\xhookrightarrow{\rm d,c}\D(A^{\beta_1})$, for all $\beta>\beta_1$,
and we can see that~$\D(A^{0})=H$, $\D(A^{1})=\D(A)$, $\D(A^{\frac{1}{2}})=V$.

For the time-dependent operators we assume the following:
\begin{assumption}\label{A:A1}
For  all~$t>0$ we have~$A_{\rm rc}(t)\in\LL(V,H)$, and there is a nonnegative constant~$C_{\rm rc}$
such that, $\norm{A_{\rm rc}}{L^\infty(\R_0,\LL(V,H))}\le C_{\rm rc}.$
\end{assumption}

\begin{assumption}\label{A:NN}
 We have~$\NN(t,\Bigcdot)\in\CC_{\rm b,i}(\D(A),H)$ 
and
there exist constants $C_\NN\ge 0$, $n\in\N_0$, %$\eta_{1j}\ge0$, $\eta_{2j}\ge0$,
$\zeta_{1j}\ge0$, $\zeta_{2j}\ge0$,
 $\delta_{1j}\ge 0$, ~$\delta_{2j}\ge 0$, %and~$\{\chi_{j},\widetilde\chi_{j}\}\subset\{0,1\}$,
 with~$j\in\{1,2,\dots,n\}$, such that
 for  all~$t>0$ and
 all~$(y_1,y_2)\in H\times H$, we have
\begin{align*}
&\norm{\NN(t,y_1)-\NN(t,y_2)}{H}\le C_\NN\textstyle\sum\limits_{j=1}^{n}
  \left( \norm{y_1}{V}^{\zeta_{1j}}\norm{y_1}{\D(A)}^{\zeta_{2j}}+\norm{y_2}{V}^{\zeta_{1j}}\norm{y_2}{\D(A)}^{\zeta_{2j}}\right)
   \norm{y_1-y_2}{V}^{\delta_{1j}}\norm{y_1- y_2}{\D(A)}^{\delta_{2j}},
\end{align*}
with~$\zeta_{2j}+\delta_{2j}<1$ and~$\delta_{1j}+\delta_{2j}\ge1$.
\end{assumption}

\bigskip\noindent
\textit{\large Examples.}
We can show that our Assumptions~\ref{A:A0sp}--\ref{A:NN} on the linear and nonlinear operators will be satisfied for parabolic equations
evolving in a bounded smooth, or rectangular, domain~$\Omega\in\R^{\mathbf d}$, $\mathbf d\in\{1,2,3\}$, as
\[
 \fractx{\p}{\p t}y+(-\nu\Delta +\Id)y +(a-1)y+b\cdot\nabla y -\textstyle\sum\limits_{i=2}^n\hat{a}_iy^i +(\hat b(t)\cdot\nabla y)y=0,\qquad \GG y\rest{\p\Omega}=0,\qquad y(0)=y_0,
\]
with~$n\le4$ if $\mathbf d=3$, and~$n\in\N$ if~$\mathbf d\in\{1,2\}$. Under either Dirichlet or Neumann homogeneous  boundary conditions.
For example, here we may take~$A=-\nu\Delta +\Id$ as the shifted Laplacian. The same
assumptions are also satisfied for the Navier--Stokes equations under homogeneous Dirichlet boundary conditions, where
we may take~$A=P_H(-\nu\Delta +\Id)$ as the shifted Stokes operator, where~$P_H$ is the
orthogonal projection in $L^2(\Omega,\R^{\mathbf d})$ onto the space~$H$ of
divergence free vector fields which are tangent to the boundary~$\p\Omega$ of~$\Omega$.
More comments and details on these examples are given later in Section~\ref{S:examples}. 

%%%%%%%%%%%%%%%%%%%%%%%%%%%%%%%%%%%%%%%
\subsection{Auxiliary estimates for the nonlinear terms}
Besides the assumptions on the state operators, presented in  Section~\ref{sS:assumOp}, we will
need also assumptions on the triple~$(\FF_{\M_\sigma},E_{\M_\sigma}, U_{\#\M_\sigma})$, which defines the feedback operator.
Before, we need to present suitable estimates resulting from Assumption~\ref{A:NN}. These are the content of the
following Proposition, whose proof follows by straightforward computations. The proof is, however, not trivial and is given in the
Appendix,  Section~\ref{Apx:proofP:NN}.

Recall the notation~$\|a\|\coloneqq\max_{1\le j\le n}\{a_j\}$,
for a sequence of constants $a_j\ge0$. We will also denote
\[
 \|P\|_\LL\coloneqq\norm{P_{E_{\M_\sigma}^\perp}^{U_{\#\M_\sigma}}}{\LL(H)},
\]
which will not lead to ambiguity, as soon as the pair~$(E_{\M_\sigma}, U_{\#\M_\sigma})$ is fixed.

\begin{proposition}\label{P:NN}
 If Assumptions~\ref{A:A0sp}, \ref{A:A0cdc}, and~\ref{A:NN} hold true, then 
there are constants $\overline C_{\NN1}>0$, and~$\overline C_{\NN2}>0$
such that:
  for all~$\widehat\gamma_0>0$, all~$t>0$, 
  all~$(y_1,y_2)\in H\times H$, and all~$(q,Q)\in E_{\M_\sigma}\times E_{\M_\sigma}^\perp$, we have
\begin{align}
 &2\Bigl( P_{E_{\M_\sigma}^\perp}^{U_{\#\M_\sigma}}\left(\NN(t,y_1)-\NN(t,y_2)\right),A(y_1-y_2)\Bigr)_{H}
 \le \widehat\gamma_0 \norm{y_1- y_2}{\D(A)}^{2}\notag\\
  &\hspace*{.5em}
  +\left(1+\widehat\gamma_0^{-\frac{1+\|\delta_2\|}{1-\|\delta_2\|} }\right)
 \overline C_{\NN1}\sum\limits_{j=1}^n\left( \norm{y_1}{V}^\frac{2\zeta_{1j}}{1-\delta_{2j}} \norm{y_1}{\D(A)}^\frac{2\zeta_{2j}}{1-\delta_{2j}}
 +\norm{y_2}{V}^\frac{2\zeta_{1j}}{1-\delta_{2j}} \norm{y_2}{\D(A)}^\frac{2\zeta_{2j}}{1-\delta_{2j}}
 \right)\norm{y_1-y_2}{V}^{\frac{2\delta_{1j}}{1-\delta_{2j}}},\label{NNyAy}\\
 %\intertext{and}
 &2\Bigl(  P_{E_{\M_\sigma}^\perp}^{U_{\#\M_\sigma}}\left(\NN(t,q+Q)-\NN(t,q)\right),AQ\Bigr)_{H} 
 \le \widehat\gamma_0 \norm{Q}{\D(A)}^{2}\notag\\
  &\hspace*{.5em}
  +\left(1+\widehat\gamma_0^{-\frac{1+\|\delta_2\|}{1-\|\zeta_2+\delta_2\|} }\right)
  \overline C_{\NN2}
 \left(1 +\norm{q}{V}^{2\|\frac{\zeta_1+\delta_1}{1-\zeta_2-\delta_2}\|}\right)
 \left(1+\norm{q}{\D(A)}^{\|\frac{\zeta_2}{1-\delta_2}\|}\right)\left(
 1+\norm{Q}{V}^{2\|\frac{\zeta_1+\delta_1}{1-\zeta_2-\delta_2}\|-2}
 \right)\norm{Q}{V}^{2}.\label{NNQAQ}
\end{align}
 Further, 
 $ \overline C_{\NN1}=\ovlineC{n,\frac{1}{1-\|\delta_{2}\|},C_\NN,\|P\|_\LL}$
 and~$\;\overline C_{\NN2}=\ovlineC{n,\|\zeta_1\|,\|\zeta_2\|,\frac{1}{1-\|\delta_{2}\|},\frac{1}{1-\|\zeta_2+\delta_{2}\|},
 C_\NN,\|P\|_\LL}.
 $
 \end{proposition}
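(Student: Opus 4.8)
The plan is to reduce both inequalities to the single elementary estimate
\[
2\bigl(P_{E_{\M_\sigma}^\perp}^{U_{\#\M_\sigma}}g,\,Ah\bigr)_H
\le 2\,\|P\|_\LL\,\norm{g}{H}\,\norm{Ah}{H}
= 2\,\|P\|_\LL\,\norm{g}{H}\,\norm{h}{\D(A)},
\]
which combines the Cauchy--Schwarz inequality in~$H$, the operator-norm bound $\norm{P_{E_{\M_\sigma}^\perp}^{U_{\#\M_\sigma}}g}{H}\le\|P\|_\LL\norm{g}{H}$ for the oblique projection, and the identity $\norm{Ah}{H}=\norm{h}{\D(A)}$ arising from the definition $(y,z)_{\D(A)}=(Ay,Az)_H$. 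Taking $g=\NN(t,y_1)-\NN(t,y_2)$ and invoking Assumption~\ref{A:NN}, each left-hand side is bounded by a finite sum over $j\in\{1,\dots,n\}$ of monomials in the norms $\norm{\Bigcdot}{V}$, $\norm{\Bigcdot}{\D(A)}$ of the arguments and of the increment. All that remains is to redistribute these monomials, by Young's inequality, so that the top-order $\D(A)$-norm of the increment is absorbed into the prescribed $\widehat\gamma_0$-term.

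For~\eqref{NNyAy} I would take $h=y_1-y_2$, so that the increment factor from Assumption~\ref{A:NN} together with the factor $\norm{y_1-y_2}{\D(A)}$ from $\norm{Ah}{H}$ produces $\norm{y_1-y_2}{\D(A)}^{\delta_{2j}+1}$. Since $\zeta_{2j}\ge0$ forces $\delta_{2j}<1$, this exponent is strictly below the critical value~$2$; hence Young's inequality with conjugate exponents $\tfrac{2}{\delta_{2j}+1}$ and $\tfrac{2}{1-\delta_{2j}}$ splits the term into $\widehat\gamma_0\norm{y_1-y_2}{\D(A)}^2$ plus a term in which the increment enters only through $\norm{y_1-y_2}{V}^{\frac{2\delta_{1j}}{1-\delta_{2j}}}$ and the arguments carry the exponents $\frac{2\zeta_{1j}}{1-\delta_{2j}}$ and $\frac{2\zeta_{2j}}{1-\delta_{2j}}$, exactly as on the right-hand side of~\eqref{NNyAy}. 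The accompanying power $\widehat\gamma_0^{-\frac{1+\delta_{2j}}{1-\delta_{2j}}}$ is dominated by $\widehat\gamma_0^{-\frac{1+\|\delta_2\|}{1-\|\delta_2\|}}$; absorbing the $j$-dependence into the maxima $\|\Bigcdot\|$ and covering both regimes $\widehat\gamma_0<1$ and $\widehat\gamma_0\ge1$ by the factor $1+\widehat\gamma_0^{-\frac{1+\|\delta_2\|}{1-\|\delta_2\|}}$ yields the claim, with $\overline C_{\NN1}$ collecting $n$, $C_\NN$, $\|P\|_\LL$ and the finitely many Young constants.

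The estimate~\eqref{NNQAQ} is the genuinely delicate one, and this is where I expect the main obstacle. Here $h=Q$ and the increment is $y_1-y_2=Q$, while $y_1=q+Q$ still carries $q$ inside the prefactors $\norm{q+Q}{V}^{\zeta_{1j}}\norm{q+Q}{\D(A)}^{\zeta_{2j}}$. I would first separate $q$ and $Q$ inside these prefactors using the triangle inequality together with $(a+b)^\theta\le C_\theta(a^\theta+b^\theta)$: the $\D(A)$-norm of $q$ is allowed to survive (it appears on the right-hand side), but every occurrence of $\norm{Q}{\D(A)}$ must be removed except for the single designated $\widehat\gamma_0\norm{Q}{\D(A)}^2$. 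In the worst summand the total $\D(A)$-power of $Q$ is $\zeta_{2j}+\delta_{2j}+1$, which by $\zeta_{2j}+\delta_{2j}<1$ is again strictly below~$2$; Young's inequality with conjugate exponents $\tfrac{2}{\zeta_{2j}+\delta_{2j}+1}$ and $\tfrac{2}{1-\zeta_{2j}-\delta_{2j}}$ therefore absorbs it into $\widehat\gamma_0\norm{Q}{\D(A)}^2$ and converts the surviving $Q$-dependence into a power of $\norm{Q}{V}$. The second structural hypothesis $\delta_{1j}+\delta_{2j}\ge1$ guarantees that this power is at least~$2$, which is precisely what permits writing the surviving $Q$-factor as $\norm{Q}{V}^2$ times $\bigl(1+\norm{Q}{V}^{2\|\frac{\zeta_1+\delta_1}{1-\zeta_2-\delta_2}\|-2}\bigr)$.

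The bookkeeping that I expect to be the true crux is the final majorization: each monomial produced above is bounded, via $x^a\le 1+x^b$ for $0\le a\le b$, by the prescribed product $\bigl(1+\norm{q}{V}^{2\|\frac{\zeta_1+\delta_1}{1-\zeta_2-\delta_2}\|}\bigr)\bigl(1+\norm{q}{\D(A)}^{\|\frac{\zeta_2}{1-\delta_2}\|}\bigr)\bigl(1+\norm{Q}{V}^{2\|\frac{\zeta_1+\delta_1}{1-\zeta_2-\delta_2}\|-2}\bigr)\norm{Q}{V}^{2}$, whose exponents are the maxima over $j$ of the exponents generated by the various Young splittings, and the power of $\widehat\gamma_0^{-1}$ is taken uniformly as $\tfrac{1+\|\delta_2\|}{1-\|\zeta_2+\delta_2\|}$. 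Verifying that the stated exponents dominate all monomials requires only the monotonicity relations $1-\zeta_{2j}-\delta_{2j}\le 1-\delta_{2j}$ and $\zeta_{1j}\le\zeta_{1j}+\delta_{1j}$, but it is laborious because of the many cross-terms; it uses no new idea. Collecting the finitely many constants into $\overline C_{\NN2}=\ovlineC{n,\|\zeta_1\|,\|\zeta_2\|,\frac{1}{1-\|\delta_{2}\|},\frac{1}{1-\|\zeta_2+\delta_{2}\|},C_\NN,\|P\|_\LL}$ then completes the argument.
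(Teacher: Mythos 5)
Your proposal is correct and follows essentially the same route as the paper's own proof in the appendix: Cauchy--Schwarz plus the projection operator norm and Assumption~\ref{A:NN} to get the monomial bound with increment factor $\norm{y_1-y_2}{\D(A)}^{1+\delta_{2j}}$, Young's inequality with conjugate exponents $\tfrac{2}{1+\delta_{2j}}$, $\tfrac{2}{1-\delta_{2j}}$ (respectively $\tfrac{2}{1+\zeta_{2j}+\delta_{2j}}$, $\tfrac{2}{1-\zeta_{2j}-\delta_{2j}}$ after splitting $\norm{q+Q}{}$ via $(a+b)^s\le 2^{\norm{s-1}{}}(a^s+b^s)$) to absorb the top-order $\D(A)$-norm of the increment into the $\widehat\gamma_0$-term, and the same final majorization of the surviving monomials by the stated product using $\delta_{1j}+\delta_{2j}\ge1$ and the maxima over $j$.
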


Inequality~\eqref{NNQAQ} will be used to prove the existence of a solution for the closed-loop system, while~\eqref{NNyAy}
will be used to prove the uniqueness of the solution.

%%%%%%%%%%%%%%%%%%%%%%%%%%%%%%%%%%%%%%%
\subsection{Assumptions on the oblique projection based feedback}\label{sS:assumProj}
We present here the assumptions on the triple~$(\FF_{\M_\sigma},E_{\M_\sigma}, U_{\#\M_\sigma})$.
Observe that, from~\eqref{sys-FeedKy-non} and~\eqref{FeedKy-Nx},
the orthogonal projection $q\coloneqq P_{E_{\M_\sigma}}y$
satisfies
\begin{equation}\label{sys-q}
\dot q=-\FF_{\M_\sigma}(q), 
\end{equation}
For the exponential stability of~\eqref{sys-FeedKy-non} we need $q(t)$ to decrease exponentially to zero.
We will also ask for integrability of~$q$ and~$\dot q$ as follows.
\begin{assumption}\label{A:FFM}
 We have~$\FF_{\M_\sigma}\in\CC_{\rm b,i}(E_{\M_\sigma},E_{\M_\sigma})$ and there are
 constants~$C_{q0}\ge0$,~$C_{q1}\ge1$,~$C_{q2}\ge0$, $C_{q3}\ge0$, $\xi\ge1$,~$\lambda>0$,
 $\beta_{0}\ge0$,
 $\beta_{1}\ge0$, 
 $\beta_{2}\ge0$, $\eta_{1}\ge0$, 
 $\eta_{2}\ge0$, and~$1<\mathfrak r<\frac{1}{\|\zeta_2+\delta_2\|}$, all independent of~$\M_\sigma$, such that:
\begin{align*}
&\norm{\FF_{\M_\sigma}(\widehat q)}{H}\le C_{q_0}\norm{\widehat q}{\D(A)}^\xi,\quad\mbox{for all}\quad \widehat q\in E_{\M_\sigma},\\
&\beta_1+\beta_2\ge1,\qquad\mathfrak r\|\zeta_1+\delta_1+(\eta_1+\eta_2)(\zeta_2+\delta_2)\|\ge1,\\%\qquad
&\left(\dnorm{\fractx{\zeta_1+\delta_1}{1-\zeta_2-\delta_2}}{}-1\right)\beta_2<1-\dnorm{\fractx{\zeta_2}{1-\delta_2}}{}
,\qquad
\left(\dnorm{\fractx{\zeta_1+\delta_1}{1-\zeta_2-\delta_2}}{}-1\right)
\dnorm{\zeta_{2}+\delta_{2}}{}\mathfrak r\eta_2<1-\dnorm{\fractx{\zeta_2}{1-\delta_2}}{},
\end{align*}
and
 every solution~$q$ of system~\eqref{sys-q} satisfies, for all~$t\ge0$,
\begin{align*}
    &\norm{q(t)}{V}\le C_{q1}\ex^{-\lambda t}\norm{q(0)}{V},\qquad\norm{q}{L^2(\R_{0},\D(A))}\le
 C_{q2}\norm{q(0)}{V}^{\eta_1}\norm{q(0)}{\D(A)}^{\eta_2},\qquad\mbox{and}\\
  &\norm{Aq-\FF_{\M_\sigma}(q)}{L^{2\mathfrak r}(\R_{0},H)}\le
  C_{q3}\norm{q(0)}{V}^{\beta_1}\norm{q(0)}{\D(A)}^{\beta_2}.
\end{align*}
\end{assumption}

As we see, when~$\dnorm{\fractx{\zeta_1+\delta_1}{1-\zeta_2-\delta_2}}{}\ne1$,
we ask for small enough magnitudes of~$\beta_2$.
Observe that with~$\FF_{\M_\sigma}=\lambda\Id $ we can take~$({\beta_1},{\beta_2})=(0,1)$,
but we cannot take~${\beta_2}<1$, that is, Assumption~\ref{A:FFM} does not necessarily hold true, being satisfied only if
$\dnorm{\frac{\zeta_2}{1-\delta_2}}{}+\dnorm{\fractx{\zeta_1+\delta_1}{1-\zeta_2-\delta_2}}{}<2$.
Instead, with~$\FF_{\M_\sigma}=A+\lambda\Id$ the Assumption~\ref{A:FFM} is always satisfied, because we can take
~$({\beta_1},{\beta_2})=({\eta_1},{\eta_2})=(1,0)$. This is why in Theorem~\eqref{T:N.goal} we
write only~$A+\lambda\Id$, and exclude~$\lambda\Id $.

Finally, we present the assumptions involving~$P_{U_{\#\M_\sigma}}^{E_{\M_\sigma}^\perp}$.
Note that both~$U_{\#\M_\sigma}$ and~$E_{\M_\sigma}^\perp$ are closed subspaces. Thus,
the {\em oblique}
projection~$P_{U_{\#\M_\sigma}}^{E_{\M_\sigma}^\perp}\colon H\mapsto U_{\#\M_\sigma}$ is well defined if, and only if, we have the direct sum
$H=U_{\#\M_\sigma}\oplus E_{\M_\sigma}^\perp$.
In particular, by considering the feedback~\eqref{FeedKy}, we are necessarily assuming the following.
\begin{assumption}\label{A:DS}
 We have the direct sum~$H=U_{\#\M_\sigma}\oplus E_{\M_\sigma}^\perp$.
\end{assumption}
Recall that~$\#\M_\sigma={\rm dim}(U_{\#\M_\sigma})={\rm dim}(E_{\M_\sigma})$. 
Recall also that Assumption~\ref{A:DS} means that for every  given~$h\in H$ there exists one, and only one,
pair $(h_{U_{\#\M_\sigma}},h_{E_{\M_\sigma}^\perp})$ satisfying
\[
 h=h_{U_{\#\M_\sigma}}+h_{E_{\M_\sigma}^\perp}
 \qquad\mbox{with}\qquad (h_{U_{\#\M_\sigma}},h_{E_{\M_\sigma}^\perp})\in {U_{\#\M_\sigma}}\times E_{\M_\sigma}^\perp.
\]
Hence we simply take
$
P_{U_{\#\M_\sigma}}^{E_{\M_\sigma}^\perp}h\coloneqq h_{U_{\#\M_\sigma}}.
$
Similarly, the oblique projection in~$H$ onto~${E_{\M_\sigma}^\perp}$ along~$U_{\#\M_\sigma}$ is defined by
$
P_{E_{\M_\sigma}^\perp}^{U_{\#\M_\sigma}}h\coloneqq h_{E_{\M_\sigma}^\perp}.
$
Observe that~$P_{U_{\#\M_\sigma}}^{E_{\M_\sigma}^\perp}h$ is the only element in the set~$(h+{E_{\M_\sigma}^\perp}){\textstyle\bigcap} U_{\#\M_\sigma}$, and
~$P_{E_{\M_\sigma}^\perp}^{U_{\#\M_\sigma}}h$ is the only element in the set~$(h+U_{\#\M_\sigma}){\textstyle\bigcap} {E_{\M_\sigma}^\perp}$.

The oblique projection~$P_{U_{\#\M_\sigma}}^{E_{\M_\sigma}^\perp}$ is {\em orthogonal} if, and only if, ${U_{\#\M_\sigma}}={E_{\M_\sigma}}$. The operator
norm of an orthogonal projection
onto a closed subspace~$F\subset H$ is always equal to~$1$, if~$F\ne\{0\}$, that is, ~$\norm{P_{F}^{F^\perp}}{\LL(H)}=1$. If~$F=\{0\}$,
then~$\norm{P_{F}^{F^\perp}}{\LL(H)}=0$.
The operator norm of an oblique {\em nonorthogonal} projection is strictly larger than~$1$. In particular,
in case $U_{\#\M_\sigma}\ne{E_{\M_\sigma}}$ we have that
$\norm{P_{U_{\#\M_\sigma}}^{E_{\M_\sigma}^\perp}}{\LL(H)}>1$. 

Orthogonal projections~$P_{F}^{F^\perp}$ will be denoted by~$P_{F}$, for simplicity. 
We have the following properties, which are useful in the computations hereafter.
\begin{subequations}\label{propProj}
 \begin{align}
  P_{E_{\M_\sigma}}&=P_{E_{\M_\sigma}}P_{U_{\#\M_\sigma}}^{E_{\M_\sigma}^\perp},&
   P_{U_{\#\M_\sigma}}^{E_{\M_\sigma}^\perp}&=P_{U_{\#\M_\sigma}}^{E_{\M_\sigma}^\perp}P_{E_{\M_\sigma}},\\
  P_{E_{\M_\sigma}^\perp}^{U_{\#\M_\sigma}}&=P_{E_{\M_\sigma}^\perp}+P_{E_{\M_\sigma}^\perp}^{U_{\#\M_\sigma}}P_{E_{\M_\sigma}},&
  P_{E_{\M_\sigma}^\perp}^{U_{\#\M_\sigma}}&=P_{E_{\M_\sigma}^\perp}-P_{E_{\M_\sigma}^\perp}P_{U_{\#\M_\sigma}}^{E_{\M_\sigma}^\perp}P_{E_{\M_\sigma}}.
 \end{align}
\end{subequations}

For further comments on oblique projections we refer to~\cite[Sect.~2.2]{KunRod18-cocv} and~\cite[Sect.~3]{RodSturm18}.

The next assumption is less trivial and it is the one that gives us the stability condition. 
In order to state the assumption we start
by recalling the particular eigenvalues~$\alpha_{\M_{\sigma}}$ and~$\alpha_{\M_{\sigma+}}$, defined in~\eqref{VDAalphaM}.
Then we define suitable functions as follows.
For a given triple~$\gamma=(\gamma_1,\gamma_2,\gamma_3)\in\R_0^3$ with positive coordinates, and a given
function~$q\in L^\infty(\R_{0}, E_{\M_\sigma})$, we define 
\begin{subequations}\label{frak-apr}%
\begin{align}
 &\mathfrak a_0\coloneqq 2-\gamma_1-\gamma_2-\gamma_3,\qquad
  \mathfrak a_1\coloneqq\gamma_1^{-1}\norm{P_{E_{\M_\sigma}^\perp}^{U_{\#\M_\sigma}}}{\LL(H)}^2 C_{\rm rc}^2,\qquad
  \mathfrak a_2\coloneqq \gamma_3^{-\frac{2}{1-\|\zeta_2+\delta_2\|} }
  \overline C_{\NN2},\\
  &\mathfrak q\coloneqq\max_{t\ge 0}\left( 1 +\norm{q}{V}^{2\|\frac{\zeta_1+\delta_1}{1-\zeta_2-\delta_2}\|}\right)
 \left(1+\norm{q}{\D(A)}^{\|\frac{\zeta_2}{1-\delta_2}\|}\right)
 ,\qquad
   \mathfrak p\coloneqq\|\fractx{\zeta_1+\delta_1}{1-\zeta_2-\delta_2}\|-1,\qquad
   \mathfrak h\coloneqq\gamma_2^{-1}\norm{\MM(q)}{H}^2,
 \end{align}
where the constants~$C_{\rm rc}$ and~$C_\NN$ are as in Assumptions~\ref{A:A1} and~\ref{A:NN}, respectively, and
\end{subequations}
\begin{align}\label{MM}
    \MM(q)&\coloneqq-P_{E_{\M_\sigma}^\perp}^{U_{\#\M_\sigma}}\Bigl(Aq +A_{\rm rc}(t)q+\NN(t,q)\Bigr)
    - P_{E_{\M_\sigma}^\perp} P_{U_{\#\M_\sigma}}^{E_{\M_\sigma}^\perp}\FF_{E_{\M_\sigma}}(q)\notag\\
    &=-P_{E_{\M_\sigma}^\perp}^{U_{\#\M_\sigma}}\Bigl(Aq +A_{\rm rc}(t)q+\NN(t,q)-\FF_{E_{\M_\sigma}}(q)\Bigr),\qquad 
    q\in E_{\M_\sigma}.
\end{align}

\begin{assumption}\label{A:suffalpha}
With~$\mathfrak r>1$ as in Assumption~\ref{A:FFM}, we have that %$\mathfrak h\in L^r(\R_{s_0},\R)$ and
\begin{align}\label{suffalpha}
 \alpha_{\M_{\sigma+}}>\inf_{\begin{subarray}{c}
                       (\gamma_1,\gamma_2,\gamma_3)\in\R_0^3,\\
                       \mathfrak a_0>0,\hspace*{.7em}\overline\e>0,\\
                       \mathfrak a_0\alpha_{\M_{\sigma+}}-\mathfrak a_1-\mathfrak a_2\mathfrak q-\overline\e>0
                      \end{subarray}}
\frac{1}{\mathfrak a_0}\left(\mathfrak a_1+\mathfrak a_2\mathfrak q+\overline\e
+(\mathfrak p+1)\mathfrak a_2\mathfrak q\left(\norm{Q_0}{V}^2
  +(\tfrac{\mathfrak r}{\mathfrak r-1}\overline\e)^{-\frac{\mathfrak r-1}{\mathfrak r}}
  \norm{\mathfrak h}{L^{\mathfrak r}(\R_{0},\R)}\right)^{\mathfrak p}\right).
  \end{align}
\end{assumption}

\bigskip\noindent
\textit{\large Remarks and examples.}
Note that Assumption~\ref{A:FFM}
holds true with, for example,~$\FF_{\M_\sigma}=A+\lambda\Id$.
Of course it would also hold true with~$\FF_{\M_\sigma}=\lambda\Id$ if we would not ask for the constants in
there to be independent of~$\M_\sigma$. 
Such independence is helpful to prove that, in particular situations as in Corollary~\ref{C:bddratP=suffalpha}
below, Assumption~\ref{A:suffalpha}
will be satisfied for large enough~$M$. It is also helpful to prove, later on, that
the number of actuators depend only in the~$V$-norm of the initial condition~$y(0)=q(0)+Q(0)$, with~$(q(0),Q(0))\in
E_{\M_\sigma}\times E_{\M_\sigma}^\perp$ (cf.~Thm.~\ref{T:N.goal}).

Concerning Assumption~\ref{A:DS}, it is needed to define the oblique projection
$P_{U_{\#\M_\sigma}}^{E_{\M_\sigma}^\perp}$ and it is not difficult to find the actuators such that it holds true.
What is not clear is whether we can find the actuators, for example a finite number of
indicator functions~$1_{\omega_i}$ in the setting
of parabolic equations,
so that Assumption~\ref{A:suffalpha} also holds true. Indeed,
recalling~\eqref{MM} and~\eqref{frak-apr}, and using Assumption~\ref{A:NN}, we obtain
\begin{align*}
    &\quad\norm{\mathfrak h}{{L^{\mathfrak r}(\R_{0},\R)}}
    =\gamma_2^{-1}\norm{\MM(q)}{{L^{2\mathfrak r}(\R_{0},H)}}^2
     =\gamma_2^{-1}
     \norm{-P_{E_{\M_\sigma}^\perp}^{U_{\#\M_\sigma}}\Bigl(Aq +A_{\rm rc}(t)q+\NN(t,q) -\FF_{\M_{\sigma}}(q)\Bigr)
     }{{L^{2\mathfrak r}(\R_{0},H)}}^2\\
     &\le\ovlineC{\|P\|_\LL}
     \left(\norm{Aq-\FF_{\M_{\sigma}}(q)}{L^{2\mathfrak r}(\R_{0},H)}^2 
     +\norm{A_{\rm rc}(t)q+\NN(t,q)}{L^{2\mathfrak r}(\R_{0},H)}^2\right)\\
     &\le\ovlineC{n,\|P\|_\LL}
     \!\!\left(\!\!C_{q3}\!\norm{q(0)}{V}^{2\beta_1}\!\norm{q(0)}{\D(A)}^{2\beta_2}
     +C_{\rm rc}^2C_{q1}^2(4\lambda)^{-\frac12}\!\norm{q(0)}{V}^2
     +C_\NN^2\sum\limits_{j=1}^n\!\norm{\norm{q}{V}^{\zeta_{1j}+\delta_{1j}}
    \!\norm{q}{\D(A)}^{\zeta_{2j}+\delta_{2j}}}{L^{2\mathfrak r}(\R_{0},\R)}^2\!\right)\!.%\\
    \end{align*}
  Observe that from Assumption~\ref{A:FFM} we have 
  \begin{align*}\int_{\R_{0}}\norm{q}{V}^{2\mathfrak r(\zeta_{1j}+\delta_{1j})}
    \norm{q}{\D(A)}^{2\mathfrak r(\zeta_{2j}+\delta_{2j})}\,\ed s
    &\le\left(\int_{\R_{0}}\norm{q}{V}^{\frac{2\mathfrak r(\zeta_{1j}+\delta_{1j})}{1-\mathfrak r(\zeta_{2j}+\delta_{2j})}}
    \,\ed s\right)^{1-\mathfrak r(\zeta_{2j}+\delta_{2j})}
    \left(\int_{\R_{0}}
    \norm{q}{\D(A)}^2\,\ed s\right)^{\mathfrak r(\zeta_{2j}+\delta_{2j})}\\
    &\le\ovlineC{C_{q1},C_{q2},\frac1{\lambda}}\norm{q(0)}{V}^{2\mathfrak r(\zeta_{1j}+\delta_{1j})}
         \norm{q(0)}{V}^{2\eta_1\mathfrak r(\zeta_{2j}+\delta_{2j})}
         \norm{q(0)}{\D(A)}^{2\eta_2\mathfrak r(\zeta_{2j}+\delta_{2j})},
    \end{align*}
  which allow us to derive that, with~$\overline C_1=\ovlineC{n,\|P\|_\LL,C_{\rm rc},C_\NN,C_{q1},C_{q2},C_{q3},\frac1{\lambda}}$,
  \begin{align}
    \norm{\mathfrak h}{{L^{\mathfrak r}(\R_{0},\R)}}
      &\le \overline C_1
     \left(\norm{q(0)}{V}^{2}+\alpha_{\M_\sigma}^{\beta_2}\norm{q(0)}{V}^{2(\beta_1+\beta_2)}
          +\sum\limits_{j=1}^n\alpha_{\M_\sigma}^{\mathfrak r\eta_2(\zeta_{2j}+\delta_{2j})}
          \norm{q(0)}{V}^{2\mathfrak r(\zeta_{1j}+\delta_{1j}+(\eta_1+\eta_2)(\zeta_{2j}+\delta_{2j}))}\right)\notag\\
       &\le\ovlineC{n,\|P\|_\LL,C_{\rm rc},C_\NN,C_{q1},C_{q2},C_{q3},\frac1{\lambda},\norm{q(0)}{V}}
           \left(1+\alpha_{\M_\sigma}^{\beta_2}+\alpha_{\M_\sigma}^{\mathfrak r\eta_2\|\zeta_{2}+\delta_{2}\|}
      \right)\norm{q(0)}{V}^{2},\label{normhLr}
     \end{align}
    Recall also that~$\beta_1+\beta_2\ge1$
  and~$\mathfrak r\|\zeta_{1}+\delta_{1}+(\eta_1+\eta_2)(\zeta_{2}+\delta_{2}\|
  \ge1$.

\begin{corollary}\label{C:bddratP=suffalpha}
Suppose that the ratio~$\frac{\alpha_{\M_{\sigma}}}{\alpha_{\M_{\sigma+}}}\le\Lambda$ and the projection norm
$\norm{P_{E_{\M_\sigma}^\perp}^{U_{\#\M_\sigma}}}{\LL(H)}\le C_P$ both remain bounded,
with~$(\Lambda,C_P)$ independent of~$M$. Then Assumption~\ref{A:suffalpha} is satisfied.
\end{corollary}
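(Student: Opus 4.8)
The plan is to exploit that the right-hand side of~\eqref{suffalpha} is an infimum over feasible tuples $(\gamma_1,\gamma_2,\gamma_3,\overline\e)$, so that it suffices to exhibit \emph{one} such tuple for which the corresponding expression is strictly below~$\alpha_{\M_{\sigma+}}$, for all large~$M$. First I would freeze $\gamma_1=\gamma_2=\gamma_3=\tfrac12$; then $\mathfrak a_0=\tfrac12>0$ is a fixed constant, while the hypothesis $\|P\|_\LL\le C_P$ makes both $\mathfrak a_1=\gamma_1^{-1}\|P\|_\LL^2C_{\rm rc}^2\le 2C_P^2C_{\rm rc}^2$ and $\mathfrak a_2=\gamma_3^{-\frac{2}{1-\|\zeta_2+\delta_2\|}}\overline C_{\NN2}$ bounded uniformly in~$M$, because $\overline C_{\NN2}$ depends on $\|P\|_\LL$ only through its bound~$C_P$ and $1-\|\zeta_2+\delta_2\|>0$ by Assumption~\ref{A:NN} (which also gives $b\coloneqq\|\tfrac{\zeta_2}{1-\delta_2}\|<1$, used below).

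The arithmetic core is the spectral comparison on the finite-dimensional space~$E_{\M_\sigma}$: since $\alpha_i^2\le\alpha_{\M_\sigma}\alpha_i$ for $i\in\M_\sigma$, one has $\norm{\widehat q}{\D(A)}\le\alpha_{\M_\sigma}^{1/2}\norm{\widehat q}{V}$ for all $\widehat q\in E_{\M_\sigma}$. Feeding this, together with the decay $\norm{q(t)}{V}\le C_{q1}\ex^{-\lambda t}\norm{q(0)}{V}$ from Assumption~\ref{A:FFM} and $\norm{q(0)}{V}\le\norm{y_0}{V}<R$ (as $P_{E_{\M_\sigma}}$ is orthogonal in~$V$), into the definition of~$\mathfrak q$ yields $\mathfrak q\le\overline C\,(1+\alpha_{\M_\sigma}^{b/2})$; and the already-derived inequality~\eqref{normhLr} gives $\norm{\mathfrak h}{L^{\mathfrak r}(\R_0,\R)}\le\overline C\,(1+\alpha_{\M_\sigma}^{m})$, where $m\coloneqq\max\{\beta_2,\mathfrak r\eta_2\|\zeta_2+\delta_2\|\}$ and, here and below, $\overline C$ denotes constants depending on $R,\Lambda,C_P$ and the fixed data but not on~$M$. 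The ratio bound $\alpha_{\M_\sigma}\le\Lambda\alpha_{\M_{\sigma+}}$ then converts these into $\mathfrak a_2\mathfrak q\le\overline C\,\alpha_{\M_{\sigma+}}^{b/2}$ and $\norm{\mathfrak h}{L^{\mathfrak r}(\R_0,\R)}\le\overline C\,\alpha_{\M_{\sigma+}}^{m}$.

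Next I would take $\overline\e=1$, which renders the factor $(\tfrac{\mathfrak r}{\mathfrak r-1}\overline\e)^{-\frac{\mathfrak r-1}{\mathfrak r}}$ a fixed constant; moreover, since $\mathfrak a_0\alpha_{\M_{\sigma+}}$ grows linearly while $\mathfrak a_1$ is bounded and $\mathfrak a_2\mathfrak q\le\overline C\,\alpha_{\M_{\sigma+}}^{b/2}$ with $b/2<\tfrac12$, the feasibility constraint $\mathfrak a_0\alpha_{\M_{\sigma+}}-\mathfrak a_1-\mathfrak a_2\mathfrak q-\overline\e>0$ holds for all large~$M$. Using $\norm{Q_0}{V}\le\norm{y_0}{V}<R$ and assembling the bracketed expression then bounds it, for large~$M$, by $\overline C\,\alpha_{\M_{\sigma+}}^{\theta}$, where
\[
 \theta\coloneqq\tfrac12 b+\Bigl(\dnorm{\tfrac{\zeta_1+\delta_1}{1-\zeta_2-\delta_2}}{}-1\Bigr)m=\tfrac12 b+\mathfrak p\,m
\]
(with the understanding that when $\mathfrak p\le0$ the last summand in~\eqref{suffalpha} stays bounded, as its base $\norm{Q_0}{V}^2+\overline C\norm{\mathfrak h}{L^{\mathfrak r}(\R_0,\R)}\to\infty$, so the bound $\overline C\,\alpha_{\M_{\sigma+}}^{\theta}$ still holds with $\theta$ replaced by $\tfrac12 b$).

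The decisive point is that the two inequalities of Assumption~\ref{A:FFM} involving $\beta_2$ and $\eta_2$ read exactly $\mathfrak p\beta_2<1-b$ and $\mathfrak p\,\mathfrak r\eta_2\|\zeta_2+\delta_2\|<1-b$, whence $\mathfrak p\,m<1-b$ and therefore $\theta<\tfrac12 b+(1-b)=1-\tfrac12 b<1$. Since along the sequence $\#\M_\sigma\to\infty$ forces $\alpha_{\M_\sigma}\ge\alpha_{\#\M_\sigma}\to\infty$, the ratio bound gives $\alpha_{\M_{\sigma+}}\ge\Lambda^{-1}\alpha_{\M_\sigma}\to\infty$; combined with $\theta<1$ this makes $\overline C\,\alpha_{\M_{\sigma+}}^{\theta}<\alpha_{\M_{\sigma+}}$ for all sufficiently large~$M$, which is the feasible point sought and establishes~\eqref{suffalpha}. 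I expect the main obstacle to be exactly the exponent bookkeeping of the third step: one must verify that carrying $\mathfrak q$ and $\norm{\mathfrak h}{L^{\mathfrak r}(\R_0,\R)}$ through the spectral comparison produces precisely the exponent~$\theta$ that the structural inequalities of Assumption~\ref{A:FFM} are designed to keep below~$1$, so that the sublinear growth in~$\alpha_{\M_{\sigma+}}$ is genuinely forced rather than merely assumed.
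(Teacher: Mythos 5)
Your proposal is correct and follows essentially the same route as the paper: fix an admissible tuple $(\gamma,\overline\e)$, use the uniform bounds $C_P$ and $\Lambda$ together with $\norm{\widehat q}{\D(A)}\le\alpha_{\M_\sigma}^{1/2}\norm{\widehat q}{V}$ and estimate~\eqref{normhLr} to show the right-hand side of~\eqref{suffalpha} grows like $\alpha_{\M_{\sigma+}}^{\theta}$ with $\theta<1$ (forced by the exponent inequalities of Assumption~\ref{A:FFM}), and conclude from $\alpha_{\M_{\sigma+}}\to+\infty$. The paper's own proof of the corollary is just a terser statement of this, with the detailed limit computations deferred to~\eqref{suffalpha-bddM3} in the proof of Theorem~\ref{T:N.goal}, which you have in effect reproduced.
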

\begin{proof} 
We know that~$\lim\limits_{M\to+\infty}\alpha_{\M_{\sigma+}}=+\infty$, then
for fixed~$\gamma\in\R_0^3$, such that~$\mathfrak a_0>0$, and~$\overline\e>0$, we see that~\eqref{suffalpha} will
be satisfied for large enough~$M$, 
because~$0\le \max\{\beta_2\mathfrak p,\mathfrak r\eta_2\|\zeta_{2}+\delta_{2}\|\mathfrak p\}<1$,
due to Assumption~\ref{A:FFM}. Note that the constant $\overline C$ in~\eqref{normhLr} is independent of~$\alpha_{\M_\sigma}$.
 \qed\end{proof}

The boundedness of the ratio~$\frac{\alpha_{\M_{\sigma}}}{\alpha_{\M_{\sigma+}}}\le\Lambda$ is
assumed in Theorem~\ref{T:N.goal}. This ratio depends only on the choice of~$\M_\sigma$ (cf.~Rem.~\ref{R:frac-eigs}). 
On the other hand, the boundedness of~$\norm{P_{E_{\M_\sigma}^\perp}^{U_{\#\M_\sigma}}}{\LL(H)}\le C_P$
is generally a nontrivial
assumption. However, as we have seen in Section~\ref{sS:exRect} for parabolic equations
evolving in a bounded rectangular domain we can choose~$(E_{\M_\sigma},U_{\#\M_\sigma})$ so that the
operator norm of the projection remains
bounded, furthermore the total volume~${\rm vol}({\bigcup_{i=1}^M\omega_i})=r{\rm vol}(\Omega)$ can be arbitrarily small.
On the other hand we have also mentioned in Section~\ref{sS:exRect} that for general domains such choice of~$(E_{\M_\sigma},U_{\#\M_\sigma})$
is an open interesting question.

%%%%%%%%%%%%%%%%%%%%%%%%%%%%%%%%%%%%%%%
\section{Stability of the closed-loop system}\label{S:feedback}
Here we prove that system~\eqref{sys_F} is exponentially stable with the feedback in~\eqref{FeedKy-Nx}, provided
the above assumptions are satisfied by
the state operators and the triple~$(\FF_{\M_\sigma},U_{\#\M_\sigma},E_{\M_\sigma})$.

Let $\mathfrak a_0,\mathfrak a_1,\mathfrak a_2,\mathfrak p$, $\mathfrak q$, and~$\mathfrak h$, be as in~\eqref{frak-apr},
and~$\overline\e$ be as in~\eqref{suffalpha}. Note that, if Assumption~\ref{A:suffalpha} is satisfied, then
\begin{subequations}\label{eps.tildeeps}
\begin{align}\label{suffalpha.gamma}
 &\mathfrak a_0\alpha_{\M_{\sigma+}}>
\mathfrak a_1+\mathfrak a_2\mathfrak q+\overline\e+(\mathfrak p+1)\mathfrak a_2\mathfrak q\left(\norm{Q_0}{V}^2
  +(\tfrac{\mathfrak r}{\mathfrak r-1}\overline\e)^{-\frac{\mathfrak r-1}{\mathfrak r}}
  \norm{\mathfrak h}{L^{\mathfrak r}(\R_{0},\R)}\right)^{\mathfrak p},\quad\mbox{for some}\quad\gamma\in\R_0^3.
\intertext{We define the constants}
 &\e\coloneqq\mathfrak a_0\alpha_{\M_{\sigma+}}-\mathfrak a_1-\mathfrak a_2\mathfrak q
-\mathfrak a_2\mathfrak q\norm{Q_0}{V}^{2\mathfrak p},\\
&\widetilde\e\coloneqq\mathfrak a_0\alpha_{\M_{\sigma+}}-\mathfrak a_1-\mathfrak a_2\mathfrak q 
-\mathfrak a_2\mathfrak q(\mathfrak p+1)\left(\norm{Q_0}{V}^2
+(\tfrac{r}{r-1}\overline\e)^{-\frac{r-1}{r}}\norm{\mathfrak h}{L^r(\R_{0},H)}\right)^{\mathfrak p}
\intertext{and observe that, for~$\gamma$ as in~\eqref{suffalpha.gamma}, we have}
&\e\ge\widetilde\e\ge\overline\e>0. 
\end{align}
\end{subequations}
\begin{theorem}\label{T:main.1pair}
Let Assumptions~\ref{A:A0sp}--\ref{A:NN} and~\ref{A:FFM}--\ref{A:suffalpha} be satisfied.
Then system~\eqref{sys_F} is exponentially stable,
with~$\mathfrak F=\KKK_{U_{\#\M_\sigma}}^{\FF_{\M_\sigma},\NN}$ as in~\eqref{FeedKy-Nx}.  The solution~$y$
satisfies~$\norm{y(t)}{V}
 \le \overline C\ex^{-\frac{\mu}{2} t}\norm{y(0)}{V}$, for all~$t\ge0$, where~$\mu<\min\{\widetilde\e,2\lambda\}$, $\widetilde\e$ is
 as in~\eqref{eps.tildeeps}, and~$\overline C=\ovlineC{n,\|P\|_\LL,C_{\rm rc},C_\NN,\frac{1}{\widetilde\e},C_{1q},
   \norm{q(0)}{V},\frac{1}{\gamma_3},\frac{1}{\widetilde\e-\mu},\alpha_{\M_\sigma}}
   $.
\end{theorem}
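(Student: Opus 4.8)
The plan is to split the state along the $A$-invariant orthogonal decomposition $H=E_{\M_\sigma}\oplus E_{\M_\sigma}^\perp$ and to treat the two components separately. Writing $q\coloneqq P_{E_{\M_\sigma}}y$ and $Q\coloneqq P_{E_{\M_\sigma}^\perp}y$, so that $y=q+Q$, I would first apply $P_{E_{\M_\sigma}}$ to~\eqref{sys_F}; using the identities~\eqref{propProj} and the fact that $\FF_{\M_\sigma}$ takes values in~$E_{\M_\sigma}$, the high-mode contributions cancel and one recovers the decoupled finite-dimensional equation~\eqref{sys-q}, $\dot q=-\FF_{\M_\sigma}(q)$. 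Applying instead $P_{E_{\M_\sigma}^\perp}^{U_{\#\M_\sigma}}$ and using $\Id-P_{U_{\#\M_\sigma}}^{E_{\M_\sigma}^\perp}=P_{E_{\M_\sigma}^\perp}^{U_{\#\M_\sigma}}$, the complementary component is seen to solve $\dot Q+P_{E_{\M_\sigma}^\perp}^{U_{\#\M_\sigma}}\bigl(AQ+A_{\rm rc}(t)Q+\NN(t,q+Q)-\NN(t,q)\bigr)=\MM(q)$, with $\MM(q)$ exactly as in~\eqref{MM}. Thus $q$ evolves autonomously and drives $Q$ through the forcing $\MM(q)$.

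From Assumption~\ref{A:FFM} the $q$-equation has a global solution decaying exponentially in $V$; since $E_{\M_\sigma}$ is finite dimensional, all its norms are equivalent, so $q$ decays exponentially in $\D(A)$ as well (with a factor $\alpha_{\M_\sigma}^{1/2}$), and the bound~\eqref{normhLr} then controls $\norm{\mathfrak h}{L^{\mathfrak r}(\R_0,\R)}$ together with the exponential decay of $\mathfrak h(t)=\gamma_2^{-1}\norm{\MM(q(t))}{H}^2$ at rate $2\lambda$. For the $Q$-equation I would establish local-in-time existence of a strong solution (e.g.\ by a Galerkin scheme, using Assumption~\ref{A:A1} to obtain $\D(A)$-regularity), with uniqueness following from inequality~\eqref{NNyAy} applied to the difference of two solutions and the a priori bounds securing existence coming from~\eqref{NNQAQ}.

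The heart of the proof is the energy estimate for $Q$. Testing the $Q$-equation with $2AQ$ in $H$ and exploiting that $AQ\in E_{\M_\sigma}^\perp$, so that $P_{E_{\M_\sigma}^\perp}^{U_{\#\M_\sigma}}$ fixes it, yields the clean dissipation $2\norm{Q}{\D(A)}^2$. Bounding the reaction--convection term by Young's inequality produces the $\gamma_1$-share of dissipation and the coefficient $\mathfrak a_1$; inequality~\eqref{NNQAQ} with $\widehat\gamma_0=\gamma_3$ absorbs the nonlinear term into the $\gamma_3$-share of dissipation plus $\mathfrak a_2\mathfrak q\bigl(\norm{Q}{V}^2+\norm{Q}{V}^{2(\mathfrak p+1)}\bigr)$, while the forcing contributes the $\gamma_2$-share plus $\mathfrak h$. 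Using $\norm{Q}{\D(A)}^2\ge\alpha_{\M_{\sigma+}}\norm{Q}{V}^2$ on $E_{\M_\sigma}^\perp$ and collecting terms with $\mathfrak a_0=2-\gamma_1-\gamma_2-\gamma_3$, I arrive at the scalar inequality $\frac{\ed}{\ed t}\norm{Q}{V}^2+\bigl(\mathfrak a_0\alpha_{\M_{\sigma+}}-\mathfrak a_1-\mathfrak a_2\mathfrak q\bigr)\norm{Q}{V}^2\le\mathfrak a_2\mathfrak q\,\norm{Q}{V}^{2(\mathfrak p+1)}+\mathfrak h$.

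The main obstacle is the superlinear term $\norm{Q}{V}^{2(\mathfrak p+1)}$, which could force finite-time blow-up; controlling it is precisely where the stability condition~\eqref{suffalpha} and the requirement of many actuators (large $\alpha_{\M_{\sigma+}}$) enter, and it is here that the dependence on $\norm{y_0}{V}$, hence the semiglobal character, originates. I would run a continuation argument against the barrier $B\coloneqq\norm{Q_0}{V}^2+(\tfrac{\mathfrak r}{\mathfrak r-1}\overline\e)^{-\frac{\mathfrak r-1}{\mathfrak r}}\norm{\mathfrak h}{L^{\mathfrak r}(\R_0,\R)}$: on any interval where $\norm{Q}{V}^2\le B$ one has $\norm{Q}{V}^{2(\mathfrak p+1)}\le B^{\mathfrak p}\norm{Q}{V}^2$, so by the definition of $\widetilde\e$ in~\eqref{eps.tildeeps} the inequality linearizes to $\frac{\ed}{\ed t}\norm{Q}{V}^2+\widetilde\e\norm{Q}{V}^2\le\mathfrak h$. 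A Gr\"onwall estimate combined with H\"older (exponents $\mathfrak r$ and $\frac{\mathfrak r}{\mathfrak r-1}$) and $\widetilde\e\ge\overline\e$ returns $\norm{Q(t)}{V}^2\le B$, strictly for $t>0$; this closes the continuation, giving global existence and the uniform bound. Finally, feeding the exponential decay of $\mathfrak h$ back into $\frac{\ed}{\ed t}\norm{Q}{V}^2+\widetilde\e\norm{Q}{V}^2\le\mathfrak h$ yields exponential decay of $\norm{Q}{V}^2$ at any rate $\mu<\min\{\widetilde\e,2\lambda\}$; adding the exponential decay of $\norm{q}{V}^2$ and using $\norm{q(0)}{V}^2+\norm{Q_0}{V}^2=\norm{y(0)}{V}^2$ gives the asserted estimate for $\norm{y(t)}{V}$.
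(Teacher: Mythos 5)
Your proposal is correct and follows the paper's architecture almost step for step: the same orthogonal splitting $y=q+Q$ into the decoupled finite-dimensional equation~\eqref{sys-q} and the forced equation~\eqref{sys-split-Q}, the same energy estimate obtained by testing with $2AQ$ and distributing the dissipation via $\gamma_1,\gamma_2,\gamma_3$ through~\eqref{Young-qQ} and~\eqref{NNQAQ}, the same spectral inequality $\norm{Q}{\D(A)}^2\ge\alpha_{\M_{\sigma+}}\norm{Q}{V}^2$, and the same final convolution estimate $\int_0^t\ex^{-\widetilde\e(t-s)}\ex^{-2\lambda s}\,\ed s\le\norm{\widetilde\e-\mu}{\R}^{-1}\ex^{-\mu t}$ combining the decay of $Q$ with that of $q$. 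The one genuinely different ingredient is your treatment of the scalar inequality $\frac{\ed}{\ed t}\norm{Q}{V}^2\le-(\mathfrak a_0\alpha_{\M_{\sigma+}}-\mathfrak a_1-\mathfrak a_2\mathfrak q)\norm{Q}{V}^2+\mathfrak a_2\mathfrak q\norm{Q}{V}^{2(\mathfrak p+1)}+\mathfrak h$: the paper passes to the comparison {\sc ode}~\eqref{dyn-odewh} and proves Lemma~\ref{L:odeh} via Brauer's nonlinear variation-of-constants formula together with a first-touching-time contradiction, whereas you linearize the superlinear term directly under the a priori barrier $B=\norm{Q_0}{V}^2+(\tfrac{\mathfrak r}{\mathfrak r-1}\overline\e)^{-\frac{\mathfrak r-1}{\mathfrak r}}\norm{\mathfrak h}{L^{\mathfrak r}(\R_0,\R)}$ and close a continuation argument with Gr\"onwall and H\"older. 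Your route is more elementary (no external perturbation lemma) and in fact only consumes a factor $B^{\mathfrak p}$ rather than $(\mathfrak p+1)B^{\mathfrak p}$ in the linearized coefficient, so it yields a decay rate at least as good as the stated $\widetilde\e$; the strictness needed to close the continuation is guaranteed because Assumption~\ref{A:suffalpha} forces $\widetilde\e>\overline\e$. The price is that the paper's Lemma~\ref{L:odeh} is a reusable standalone statement about perturbed scalar {\sc ode}s, while your barrier is wired into this particular inequality. One small slip to fix in the writeup: to obtain $\dot Q$ on the left-hand side you must apply the \emph{orthogonal} projection $P_{E_{\M_\sigma}^\perp}$ to~\eqref{sys_F-dec} (note $P_{E_{\M_\sigma}^\perp}^{U_{\#\M_\sigma}}\dot y\ne\dot Q$ in general, since $P_{E_{\M_\sigma}^\perp}^{U_{\#\M_\sigma}}$ does not annihilate $E_{\M_\sigma}$); the equation you then state for $Q$ is nevertheless exactly the paper's~\eqref{sys-split-Q}, so nothing downstream is affected.
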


%%%%%%%%%%%%%%%%%%%%%%%%%%%%%%%%%%%%%%%
\subsection{Orthogonal decomposition of the solution}
Observe that we may write system~\eqref{sys_F}, with~$\mathfrak F=\KKK_{U_{\#\M_\sigma}}^{\FF_{\M_\sigma},\NN}$,
for time~$t\ge 0$, as
\begin{align}\label{sys_F-dec}
  \dot y + \Bigl(\Id-P_{U_{\#\M_\sigma}}^{E_{\M_\sigma}^\perp}\Bigr)\Bigl(Ay +A_{\rm rc}(t)y+\NN(t,y)\Bigr)
  + P_{U_{\#\M_\sigma}}^{E_{\M_\sigma}^\perp}\FF_{\M_\sigma}(P_{E_{\M_\sigma}}y)&=0,\qquad y(0)=y_0.
\end{align}
Splitting~$y$ as
$
 y=P_{E_{\M_\sigma}}y+P_{E_{\M_\sigma}^\perp}y,
$
with~$(q,Q)\coloneqq(P_{E_{\M_\sigma}}y,P_{E_{\M_\sigma}^\perp}y)$ we obtain, using the properties in~\eqref{propProj},
\begin{align*}
  \dot q +\FF_{E_{\M_\sigma}}(q)&=0,\\
  \dot Q + P_{E_{\M_\sigma}^\perp}^{U_{\#\M_\sigma}}\Bigl(Ay +A_{\rm rc}(t)y+\NN(t,y)\Bigr)
  +P_{E_{\M_\sigma}^\perp} P_{U_{\#\M_\sigma}}^{E_{\M_\sigma}^\perp}\FF_{E_{\M_\sigma}}(q)&=0.
\end{align*}

Now we write
\[
 \NN(t,q,Q)\coloneqq\NN(t,y)-\NN(t,q)
\]
and
$
    \dot Q + P_{E_{\M_\sigma}^\perp}^{U_{\#\M_\sigma}}\Bigl(AQ +A_{\rm rc}(t)Q+\NN(t,q,Q)\Bigr)
    =\MM(q)
$
with~$\MM$ as in~\eqref{MM}.
Note that we have~$P_{E_{\M_\sigma}^\perp} P_{U_{\#\M_\sigma}}^{E_{\M_\sigma}^\perp}\FF_{E_{\M_\sigma}}(q)
=P_{E_{\M_\sigma}^\perp} (1-P_{E_{\M_\sigma}^\perp}^{U_{\#\M_\sigma}})\FF_{E_{\M_\sigma}}(q)
=-P_{E_{\M_\sigma}^\perp} P_{E_{\M_\sigma}^\perp}^{U_{\#\M_\sigma}}\FF_{E_{\M_\sigma}}(q)$,
due to~$\FF_{E_{\M_\sigma}}(q)\in E_{\M_\sigma}$.
Hence, system~\eqref{sys_F-dec} splits as follows, by setting $y_0\eqqcolon q_0+Q_0\in V$,
\begin{subequations}\label{sys-split-qQ}
\begin{align}
  \dot q +\FF_{E_{\M_\sigma}}(q)&=0,&\quad  q(0)&=q_0\in E_{\M_\sigma},\label{sys-split-q}\\
  \dot Q + P_{E_{\M_\sigma}^\perp}^{U_{\#\M_\sigma}}\Bigl(AQ +A_{\rm rc}(t)Q+\NN(t,q,Q)\Bigr)
    &=\MM(q),&\quad  Q(0)&=Q_0\in E_{\M_\sigma}^\perp{\textstyle\bigcap}V.\label{sys-split-Q}
\end{align}
\end{subequations}

We will start by studying~\eqref{sys-split-Q}, for a given function~$q$ taking values in~$E_{\M_\sigma}$.
\begin{theorem}\label{T:wsol_QqV}
Let Assumptions~\ref{A:A0sp}--\ref{A:NN} and~\ref{A:FFM}--\ref{A:suffalpha} hold true, for a 
suitable space ${U_{\#\M_\sigma}}\subset H$ and a subset~$\M_\sigma\in\mathfrak P_{\#\M_\sigma}(\N_0)$.
Let also $Q_0\in E_{\M_\sigma}^\perp{\textstyle\bigcap}V$, and
 $q\in L^\infty(\R_{0},E_{\M_\sigma})$. Then
there exists a global strong solution~$Q\in W_{\rm loc}(\R_{0},\D(A),H)$,  for the system~\eqref{sys-split-Q},
taking its values in~$E_{\M_\sigma}^\perp{\textstyle\bigcap}V$.
Moreover the solution is unique and satisfies, for all~$t\ge 0$,
\begin{subequations}\label{sys-Q0-est}
\begin{align}
 \norm{Q(t)}{V}^2&\le \ex^{-\e t} \norm{Q_0}{V}^2
 + \int_{0}^{t} \ex^{-\widetilde\e(t-s)}\norm{\mathfrak h(s)}{\R}\,\ed s,\\
  \norm{Q(t)}{V}^2&\le\norm{Q_0}{V}^2
  +(\fractx{\mathfrak r}{\mathfrak r-1}\overline\e)^{-\fractx{\mathfrak r-1}{\mathfrak r}}
  \norm{\mathfrak h}{L^{\mathfrak r}(\R_{0},\R)},
\end{align}
 \end{subequations}
with~$\e$, $\widetilde\e$, and~$\overline\e$ as in~\eqref{eps.tildeeps}.
\end{theorem}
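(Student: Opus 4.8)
The plan is to derive one master energy estimate that simultaneously rules out finite‑time blow‑up, produces global existence, and yields both inequalities in~\eqref{sys-Q0-est}; uniqueness is then treated separately. First I would construct approximate solutions by a Galerkin scheme in the eigenbasis $\{e_i\mid i\notin\M_\sigma\}$ of $E_{\M_\sigma}^\perp$, noting that $A$ leaves $E_{\M_\sigma}^\perp\cap V$ invariant so that the projected finite‑dimensional systems are locally well posed. Since $\mathfrak q$ and $\mathfrak h$ in~\eqref{frak-apr} are the data built from the given $q\in L^\infty(\R_{0},E_{\M_\sigma})$, the whole difficulty is to bound $t\mapsto\norm{Q(t)}{V}^2$ uniformly in the Galerkin index.

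The key estimate comes from testing~\eqref{sys-split-Q} with $2AQ$ in $H$. The time‑derivative term gives $\frac{\ed}{\ed t}\norm{Q}{V}^2$. Because $Q\in E_{\M_\sigma}^\perp\cap V$ forces $AQ\in E_{\M_\sigma}^\perp$, the identities~\eqref{propProj} reduce $P_{E_{\M_\sigma}^\perp}^{U_{\#\M_\sigma}}AQ$ to $AQ$, so the diffusion term contributes $2\norm{Q}{\D(A)}^2$; the spectral gap $\norm{Q}{\D(A)}^2\ge\alpha_{\M_{\sigma+}}\norm{Q}{V}^2$ is what produces decay. The reaction–convection term is controlled by Assumption~\ref{A:A1} and Young's inequality, yielding $\gamma_1\norm{Q}{\D(A)}^2+\mathfrak a_1\norm{Q}{V}^2$; the source $\MM(q)$ by Cauchy--Schwarz and Young, yielding $\gamma_2\norm{Q}{\D(A)}^2+\mathfrak h$; and the nonlinear term, written as $P_{E_{\M_\sigma}^\perp}^{U_{\#\M_\sigma}}\bigl(\NN(t,q+Q)-\NN(t,q)\bigr)$, is exactly the left‑hand side of~\eqref{NNQAQ} with $y_1=q+Q$, $y_2=q$, $\widehat\gamma_0=\gamma_3$. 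Moving $(\gamma_1+\gamma_2+\gamma_3)\norm{Q}{\D(A)}^2$ to the left leaves $\mathfrak a_0\norm{Q}{\D(A)}^2$, which by the spectral gap dominates $\mathfrak a_0\alpha_{\M_{\sigma+}}\norm{Q}{V}^2$. With $f\coloneqq\norm{Q}{V}^2$ and $c\coloneqq\mathfrak a_0\alpha_{\M_{\sigma+}}-\mathfrak a_1-\mathfrak a_2\mathfrak q$ this gives the scalar inequality
\[
\dot f+cf\le\mathfrak a_2\mathfrak q\,f^{\mathfrak p+1}+\mathfrak h .
\]

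Here Assumption~\ref{A:suffalpha} enters: it guarantees $c>0$ and, more, that $\e\ge\widetilde\e\ge\overline\e>0$ in~\eqref{eps.tildeeps}. Let $G(t)\coloneqq\ex^{-\e t}\norm{Q_0}{V}^2+\int_{0}^{t}\ex^{-\widetilde\e(t-s)}\mathfrak h(s)\,\ed s$ be the right‑hand side of the first estimate in~\eqref{sys-Q0-est}. Using $\widetilde\e\ge\overline\e$ and H\"older's inequality in time with exponents $\mathfrak r$ and $\frac{\mathfrak r}{\mathfrak r-1}$, one checks $G(t)\le B$, where $B\coloneqq\norm{Q_0}{V}^2+(\frac{\mathfrak r}{\mathfrak r-1}\overline\e)^{-\frac{\mathfrak r-1}{\mathfrak r}}\norm{\mathfrak h}{L^{\mathfrak r}(\R_{0},\R)}$ is the second bound in~\eqref{sys-Q0-est}. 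Writing $G=a+b$ with $a=\ex^{-\e t}\norm{Q_0}{V}^2\le\norm{Q_0}{V}^2$, the mean‑value bound $(a+b)^{\mathfrak p+1}-a^{\mathfrak p+1}\le(\mathfrak p+1)B^{\mathfrak p}b$ (valid since $G\le B$ and $\mathfrak p\ge0$, the latter because $\delta_{1j}+\delta_{2j}\ge1$ forces $\|\frac{\zeta_1+\delta_1}{1-\zeta_2-\delta_2}\|\ge1$), combined with $a^{\mathfrak p+1}\le\norm{Q_0}{V}^{2\mathfrak p}a$ and the identities $c-\e=\mathfrak a_2\mathfrak q\norm{Q_0}{V}^{2\mathfrak p}$, $c-\widetilde\e=(\mathfrak p+1)\mathfrak a_2\mathfrak q B^{\mathfrak p}$, shows $\dot G+cG\ge\mathfrak a_2\mathfrak q\,G^{\mathfrak p+1}+\mathfrak h$. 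Thus $G$ is a global supersolution with $G(0)=f(0)$; since the right‑hand side of the scalar inequality is locally Lipschitz in $f$, the comparison principle gives $f(t)\le G(t)\le B$ for as long as the solution exists. This uniform barrier precludes blow‑up, so the Galerkin approximations exist on all of $\R_{0}$ and, being bounded in $L^2_{\rm loc}(\R_{0},\D(A))$ with derivatives bounded in $L^2_{\rm loc}(\R_{0},H)$, converge by Aubin--Lions compactness (strongly in $L^2_{\rm loc}(\R_{0},V)$, enough to pass to the limit in $\NN$ via its continuity and the growth in Assumption~\ref{A:NN}) to a strong solution $Q\in W_{\rm loc}(\R_{0},\D(A),H)$ valued in $E_{\M_\sigma}^\perp\cap V$ and satisfying~\eqref{sys-Q0-est}.

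For uniqueness I would take two solutions $Q_1,Q_2$ with the same $(q,Q_0)$, set $D\coloneqq Q_1-Q_2$, and test the difference equation with $2AD$, controlling the nonlinear contribution by~\eqref{NNyAy} with $y_i=q+Q_i$. Because Assumption~\ref{A:NN} gives $\delta_{1j}+\delta_{2j}\ge1$, the power $\norm{D}{V}^{2\delta_{1j}/(1-\delta_{2j})}$ has exponent $\ge2$ and is absorbable as $\rho(t)\norm{D}{V}^2$, the prefactor $\rho$ lying in $L^1_{\rm loc}$ (its $\D(A)$‑powers have exponent $<2$ since $\zeta_{2j}+\delta_{2j}<1$, hence are integrable by the $L^2(\D(A))$ regularity just obtained); Gr\"onwall with $D(0)=0$ then forces $D\equiv0$. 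I expect the main obstacle to be exactly the superlinear term $\mathfrak a_2\mathfrak q\,f^{\mathfrak p+1}$, which is what can drive blow‑up and whose control is the entire purpose of Assumption~\ref{A:suffalpha}; and within that, recovering the sharp homogeneous rate $\ex^{-\e t}$ in~\eqref{sys-Q0-est} — rather than the cruder $\ex^{-\widetilde\e t}$ a naive use of $f\le B$ would give — is the delicate point, which is precisely what the convexity estimate on $G^{\mathfrak p+1}$ is designed to achieve.
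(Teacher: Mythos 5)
Your proposal is correct and its overall skeleton coincides with the paper's: test \eqref{sys-split-Q} with $2AQ$, estimate the reaction--convection term, the source $\MM(q)$ and the nonlinearity (via \eqref{NNQAQ} with $\widehat\gamma_0=\gamma_3$) by Young's inequality, use the spectral gap $\norm{Q}{\D(A)}^2\ge\alpha_{\M_{\sigma+}}\norm{Q}{V}^2$ to reduce to the scalar inequality $\dot f+cf\le \mathfrak a_2\mathfrak q\,f^{\mathfrak p+1}+\mathfrak h$, obtain existence by Galerkin in $E_{\M_\sigma}^\perp\cap E_N$ plus Aubin--Lions, and uniqueness by \eqref{NNyAy} and Gr\"onwall. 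The one place where you genuinely diverge is the treatment of the scalar {\sc ode}: the paper delegates this to Lemma~\ref{L:odeh}, whose proof writes the perturbed solution through Brauer's nonlinear variation-of-constants formula around the linearization \eqref{Linwh0} and then runs a first-crossing-time argument (assuming a $\tau_2$ where the bound is attained and deriving $w_0=0$ and $h\equiv0$ afterwards). You instead exhibit $G(t)=\ex^{-\e t}\norm{Q_0}{V}^2+\int_0^t\ex^{-\widetilde\e(t-s)}\mathfrak h(s)\,\ed s$ directly as a supersolution, verifying $\dot G+cG\ge \mathfrak a_2\mathfrak q\,G^{\mathfrak p+1}+\mathfrak h$ from the exact identities $c-\e=\mathfrak a_2\mathfrak q\norm{Q_0}{V}^{2\mathfrak p}$ and $c-\widetilde\e=(\mathfrak p+1)\mathfrak a_2\mathfrak q B^{\mathfrak p}$ together with the mean-value bound $(a+b)^{\mathfrak p+1}-a^{\mathfrak p+1}\le(\mathfrak p+1)B^{\mathfrak p}b$ on $[0,B]$, and then invoke the standard comparison principle (legitimate, since $f\mapsto f^{\mathfrak p+1}$ is locally Lipschitz for $\mathfrak p\ge0$ and the right-hand side is Carath\'eodory). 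Your computation checks out and yields exactly \eqref{sys-Q0-est}; it is more self-contained and avoids both the citation to \cite{Brauer66} and the somewhat delicate case analysis in the proof of Lemma~\ref{L:odeh}, at the price of not isolating a reusable {\sc ode} lemma in which the smallness condition \eqref{bar.eps} appears as an explicit standalone hypothesis (the paper reuses that formulation when verifying Assumption~\ref{A:suffalpha} in the proof of Theorem~\ref{T:wsol_QqV}).
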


The proof is given hereafter in Section~\ref{sS:ProofT:wsol_QqV}, where the
local stability of~\eqref{sys-split-Q} is reduced to the local stability of
a suitable scalar {\sc ode} system in the form
\begin{align}\label{dyn-odewh}
 \dot w &=-\bigl(\breve C_1 -\breve C_2\norm{w}{\R}^{p}\bigr) w +\norm{h}{\R},\qquad
 w(0)=w_0\in\R.
 \end{align}
 where~$\breve C_1>0$, ~$\breve C_2>0$, and~$w$ takes its values in~$\R$, say
 for some given $\tau>0$ we have~$w(t)\in\R$ for~$t\in[0,\tau)$.
 
 %%%%%%%%%%%%%%%%%%%%%%%%%%%%%%%%%%%%%%%
 \subsection{Auxiliary {\sc ode} stability results}
  Below ~$\breve C_1>0$ and~$\breve C_2>0$ are positive constants.
 We will look at~\eqref{dyn-odewh}
 as a perturbation of the system
 \begin{align}\label{dyn-odewh0}
 \dot w &=-\bigl(\breve C_1 -\breve C_2\norm{w}{\R}^{p}\bigr) w ,\qquad
 w(0)=w_0\in\R.
 \end{align}
 
 \begin{proposition}\label{P:odeh0}
 Let~$p>0$. If~$\norm{w_0}{\R}<(\frac{\breve C_1}{\breve C_2})^\frac1{p}$, then the solution of
 system~\eqref{dyn-odewh0} satisfies
 \begin{equation}\label{est.wh0}
 \ex^{-\breve C_1 (t-s)} \norm{w(s)}{\R}\le\norm{w(t)}{\R}\le \ex^{-\e (t-s)} \norm{w(s)}{\R},\quad\mbox{for all}\quad t\ge s \ge0, 
 \end{equation}
 with~$\e\coloneqq \breve C_1 -\breve C_2\norm{w_0}{\R}^{p}>0$.
 \end{proposition}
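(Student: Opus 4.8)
The plan is to reduce this scalar problem to a monotone equation for the modulus $\rho\coloneqq\norm{w}{\R}$ and then close the estimate by one-sided comparison. First I would note that the right-hand side $w\mapsto-(\breve C_1-\breve C_2\norm{w}{\R}^{p})w=-\breve C_1 w+\breve C_2\sign(w)\norm{w}{\R}^{p+1}$ is of class $C^1$ on $\R$ (for $p>0$ its derivative extends continuously by $0$ at the origin), hence locally Lipschitz, so the Cauchy problem~\eqref{dyn-odewh0} has a unique maximal solution. Because $w\equiv0$ is itself a solution, uniqueness forces $w(t)\ne0$ for all $t$ whenever $w_0\ne0$; thus $w$ keeps a constant sign, $\rho=\norm{w}{\R}$ is differentiable along the trajectory, and
\begin{equation*}
 \dot\rho=-\bigl(\breve C_1-\breve C_2\rho^{p}\bigr)\rho,\qquad \rho(0)=\rho_0\coloneqq\norm{w_0}{\R}.
\end{equation*}
(The excluded case $w_0=0$ gives $w\equiv0$, for which the claim is trivial.) By hypothesis $\rho_0^{p}<\breve C_1/\breve C_2$, so indeed $\e=\breve C_1-\breve C_2\rho_0^{p}>0$.

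Next I would establish the invariance $\rho(t)\le\rho_0$ on the whole existence interval; this is the key step and amounts to a barrier argument. Setting $t_1\coloneqq\inf\{t>0\mid\rho(t)>\rho_0\}$, continuity gives $\rho(t_1)=\rho_0$ and $\dot\rho(t_1)\ge0$, whereas $\dot\rho(t_1)=-(\breve C_1-\breve C_2\rho_0^{p})\rho_0=-\e\rho_0<0$ (using $\rho_0>0$), a contradiction. Hence $0\le\rho(t)\le\rho_0$ throughout, and the bracket therefore stays trapped,
\begin{equation*}
 \e=\breve C_1-\breve C_2\rho_0^{p}\le\breve C_1-\breve C_2\rho(t)^{p}\le\breve C_1 ,
\end{equation*}
which, since $\rho\ge0$, yields the two-sided differential inequality $-\breve C_1\rho\le\dot\rho\le-\e\rho$.

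Finally I would integrate these inequalities by the integrating-factor/Gr\"onwall argument: from $\frac{\ed}{\ed t}(\ex^{\e t}\rho)\le0$ and $\frac{\ed}{\ed t}(\ex^{\breve C_1 t}\rho)\ge0$ the quantity $\ex^{\e t}\rho(t)$ is nonincreasing and $\ex^{\breve C_1 t}\rho(t)$ is nondecreasing, so that for all $t\ge s\ge0$
\begin{equation*}
 \ex^{-\breve C_1(t-s)}\rho(s)\le\rho(t)\le\ex^{-\e(t-s)}\rho(s),
\end{equation*}
which is precisely~\eqref{est.wh0} after recalling $\rho=\norm{w}{\R}$. The uniform bound $\rho\le\rho_0$ also excludes finite-time blow up, so the maximal solution is global on $\overline{\R_0}$. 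The only genuinely delicate point is the sign-preservation and invariance of the second step; once $\rho(t)\le\rho_0$ is secured, the rest is routine scalar comparison, with the upper decay rate $\e$ dictated exactly by the size of the initial datum.
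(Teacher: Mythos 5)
Your proof is correct and follows essentially the same route as the paper: both establish that the modulus of the solution never exceeds its initial value (you via a first-crossing barrier argument for $\rho=\norm{w}{\R}$, the paper via the phase-line observation that $\dot w<0$ on $(0,(\breve C_1/\breve C_2)^{1/p})$ together with odd symmetry for negative data), and then sandwich $\dot\rho$ between $-\breve C_1\rho$ and $-\e\rho$ and integrate. The differences are purely cosmetic, and all the delicate points (local Lipschitz continuity of the right-hand side for small $p$, sign preservation by uniqueness) are handled correctly.
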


 The proof is straightforward. For the sake of completeness we give it in the Appendix, Section~\ref{Apx:proofP:odeh0}.

Next, for the perturbed {\sc ode} we have the following.
\begin{lemma}\label{L:odeh}
 Let~$p>0$, $r>1$, and~$h\in L^r(\R_{0},\R)$. If there exists~$\overline\e>0$ such that the inequality
 \begin{align}\label{bar.eps}
 \norm{w_0}{\R}+(\fractx{r}{r-1}\overline\e)^{-\fractx{r-1}{r}}\norm{h}{L^r(\R_{0},\R)}
  &<\left(\tfrac{\breve C_1-\overline\e}{\breve C_2(p+1)}\right)^\frac1{p}
  \end{align}
is satisfied, then the solution~$w=w^h$ of
 system~\eqref{dyn-odewh} satisfies, for all~$t\ge0$
 \begin{subequations}\label{est.wh0a}
 \begin{align}
 \norm{w^h(t)}{\R}&\le \ex^{-\e t} \norm{w_0}{\R}+ \int_{0}^{t} \ex^{-\widetilde\e(t-s)}\norm{h(s)}{\R}\,\ed s,
 \\
  \norm{w^h(t)}{\R}&\le\norm{w_0}{\R}+(\fractx{r}{r-1}\overline\e)^{-\fractx{r-1}{r}}\norm{h}{L^r(\R_{0},\R)},
 \end{align}
  \end{subequations}
   with~$\e\coloneqq \breve C_1 -\breve C_2\norm{w_0}{\R}^{p}$
  and~$\widetilde\e\coloneqq \breve C_1-\breve C_2(p+1)\left(\norm{w_0}{\R}
  +(\fractx{r}{r-1}\overline\e)^{-\fractx{r-1}{r}}\norm{h}{L^r(\R_{0},\R)}\right)^p$.
  Note that~$0<\overline\e<\widetilde\e\le\e$.
 \end{lemma}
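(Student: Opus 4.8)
The plan is to reduce the statement to a scalar comparison argument for $v\coloneqq\norm{w}{\R}$. Since $w$ is $C^1$ on its interval of existence $[0,\tau)$, the function $v$ is locally Lipschitz and, at each point where $w\ne0$, we have $\dot v=\sign(w)\dot w=-\bigl(\breve C_1-\breve C_2 v^{p}\bigr)v+\sign(w)\norm{h}{\R}\le-\bigl(\breve C_1-\breve C_2 v^{p}\bigr)v+\norm{h}{\R}$, an inequality that persists at the zeros of $w$ in the upper Dini sense. Before comparing, I would record the elementary consequences of~\eqref{bar.eps}. Writing $B\coloneqq\norm{w_0}{\R}+(\fractx{r}{r-1}\overline\e)^{-\fractx{r-1}{r}}\norm{h}{L^r(\R_{0},\R)}$, condition~\eqref{bar.eps} reads $\breve C_2(p+1)B^{p}<\breve C_1-\overline\e$, which yields at once $\widetilde\e=\breve C_1-\breve C_2(p+1)B^{p}>\overline\e>0$, and then $\e=\breve C_1-\breve C_2\norm{w_0}{\R}^{p}\ge\breve C_1-\breve C_2(p+1)B^{p}=\widetilde\e$ because $\norm{w_0}{\R}\le B$ and $p+1\ge1$; thus $0<\overline\e<\widetilde\e\le\e$, as asserted.

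Next I would introduce the candidate majorant
\[
\phi(t)\coloneqq\ex^{-\e t}\norm{w_0}{\R}+\int_0^t\ex^{-\widetilde\e(t-s)}\norm{h(s)}{\R}\,\ed s,
\]
which is precisely the right-hand side of the first estimate in~\eqref{est.wh0a}. Splitting $\phi=\phi_1+\phi_2$ into its two summands, one has $\phi_1\le\norm{w_0}{\R}$ (as $\e>0$), while Hölder's inequality with exponents $\fractx{r}{r-1}$ and $r$ gives $\phi_2(t)\le(\fractx{r}{r-1}\widetilde\e)^{-\fractx{r-1}{r}}\norm{h}{L^r(\R_0,\R)}\le(\fractx{r}{r-1}\overline\e)^{-\fractx{r-1}{r}}\norm{h}{L^r(\R_0,\R)}$, using $\widetilde\e\ge\overline\e$. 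Hence $\phi(t)\le B$ for all $t\ge0$, which will become the second estimate in~\eqref{est.wh0a} once we know $v\le\phi$. The heart of the argument is therefore to show that $\phi$ is a supersolution, that is $\dot\phi+\bigl(\breve C_1-\breve C_2\phi^{p}\bigr)\phi\ge\norm{h}{\R}$, and then to invoke the scalar comparison principle: the right-hand side $f(t,x)=-\bigl(\breve C_1-\breve C_2 x^{p}\bigr)x+\norm{h(t)}{\R}$ is $C^1$ (hence locally Lipschitz) in $x\ge0$ since $p+1>1$, $v$ is a subsolution, and $v(0)=\phi(0)=\norm{w_0}{\R}$, so the principle delivers $v(t)\le\phi(t)$ on $[0,\tau)$, giving both estimates of~\eqref{est.wh0a} simultaneously.

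The main obstacle, and the one genuinely computational step, is the verification of the supersolution inequality. Using $\dot\phi_1=-\e\phi_1$ and $\dot\phi_2=-\widetilde\e\phi_2+\norm{h}{\R}$ together with $\breve C_1-\e=\breve C_2\norm{w_0}{\R}^{p}$ and $\breve C_1-\widetilde\e=\breve C_2(p+1)B^{p}$, the inequality reduces, after cancelling $\norm{h}{\R}$ and the positive factor $\breve C_2$, to $\phi_1\bigl(\norm{w_0}{\R}^{p}-\phi^{p}\bigr)+\phi_2\bigl((p+1)B^{p}-\phi^{p}\bigr)\ge0$. For $\phi\le\norm{w_0}{\R}$ both brackets are nonnegative and nothing is to prove; for $\norm{w_0}{\R}\le\phi\le B$ I would substitute $\phi_1=\phi-\phi_2$ and use $\phi_2\ge\phi-\norm{w_0}{\R}$ (valid since $\phi_1\le\norm{w_0}{\R}$) to bound the expression below by the one-variable polynomial $G(\phi)=-\phi^{p+1}+(p+1)B^{p}\phi-(p+1)B^{p}\norm{w_0}{\R}+\norm{w_0}{\R}^{p+1}$; since $G(\norm{w_0}{\R})=0$ and $G'(\phi)=(p+1)\bigl(B^{p}-\phi^{p}\bigr)\ge0$ on $[\norm{w_0}{\R},B]$, one gets $G\ge0$ there, closing the verification. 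Finally, because the comparison bound yields $v\le\phi\le B$ throughout $[0,\tau)$, the solution stays bounded and hence, by the standard continuation theorem for ordinary differential equations with locally Lipschitz right-hand side, extends to all of $\R_{0}$; the estimates~\eqref{est.wh0a} then hold for every $t\ge0$.
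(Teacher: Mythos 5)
Your proposal is correct, and it proves both estimates of~\eqref{est.wh0a}, the global existence, and the chain $0<\overline\e<\widetilde\e\le\e$; I checked the supersolution computation and it closes: with $a=\norm{w_0}{\R}$ and $B$ your bound, the expression $\phi_1(a^p-\phi^p)+\phi_2((p+1)B^p-\phi^p)$ does reduce, after substituting $\phi_1=\phi-\phi_2$ and using $\phi_2\ge\phi-a$ together with $(p+1)B^p-a^p\ge0$, to $G(\phi)=-\phi^{p+1}+(p+1)B^p\phi-(p+1)B^p a+a^{p+1}$, which vanishes at $\phi=a$ and is nondecreasing on $[a,B]$. However, your route is genuinely different from the one in the paper. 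The paper does not construct a supersolution: it invokes the nonlinear variation-of-constants formula of Alekseev--Brauer, $w^h(t;0,w_0)=w^0(t;0,w_0)+\int_0^t\ZZ_{w^h(s;0,w_0)}(t;s)h(s)\,\ed s$, where $\ZZ_{\widehat w}$ is the flow of the linearization of the unperturbed equation around amplitude $\widehat w$ (decay rate $\breve C_1-\breve C_2(p+1)\norm{\widehat w}{\R}^p$, which is where $\widetilde\e$ comes from), and then runs a first-crossing contradiction: if the bound $B$ were attained at some first time $\tau_2$, the integral representation forces $w_0=0$ and $h\equiv0$ beyond $\tau_2$, so the bound cannot be exceeded and no finite-time blow-up occurs. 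Your argument buys self-containedness (no appeal to Brauer's lemma, no contradiction setup) at the price of the one genuinely computational step, the polynomial inequality $G\ge0$; the paper's argument makes the origin of $\widetilde\e$ as a linearized decay rate transparent and transfers more directly to the way the estimate is consumed in Theorem~\ref{T:wsol_QqV}. The only point you should tighten is regularity: with $h\in L^r$ the solution is a Carath\'eodory (absolutely continuous) solution rather than $C^1$, so the differential inequality for $\norm{w}{\R}$ holds a.e.\ (or in the upper Dini sense at zeros of $w$), and the comparison principle must be quoted in that form; this is standard and does not affect the conclusion.
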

\begin{proof}
The linearization of system~\eqref{dyn-odewh0} around a 
constant function~${\widehat w}$, ${\widehat w}(t)={\widehat w}(0)\in\R$ for all~$t\in\R$, reads 
\begin{align}\label{Linwh0}
 \dot z &=-\bigl(\breve C_1 -\breve C_2(p+1)\norm{\widehat w}{\R}^p\bigr) z ,\qquad
 z(0)=z_0\in\R,
 \end{align}
 which is exponentially stable if~$\breve C_1 >\breve C_2(p+1)\norm{\widehat w}{\R}^p$.
 That is, denoting the solution of~\eqref{Linwh0} by
 \[
  z(t)\eqqcolon \ZZ_{\widehat w}(t;0)z_0=\ZZ_{\widehat w}(t;s)\ZZ_{\widehat w}(s;0)z_0
  =\ZZ_{\widehat w}(t;s)z(s),\qquad t\ge s\ge0,\quad z_0\in\R,\quad {\widehat w}\in\R,
 \]
 we have that, with~$z(s)=z_1\in\R$,
 \begin{equation}\label{z-stab}
 \norm{\ZZ_{\widehat w}(t;s)z_1}{\R}= \ex^{-\widehat\e (t-s)} \norm{z_1}{\R},\quad\mbox{for all}\quad t\ge s\ge0, 
 \quad\mbox{with}\quad\widehat\e\coloneqq \breve C_1 -\breve C_2(p+1)\norm{\widehat w}{\R}^p.
 \end{equation}

 Let us also denote the solutions of systems~\eqref{dyn-odewh0} and~\eqref{dyn-odewh}, for~$t\ge s\ge0$, respectively by
 \begin{align*}
  w^0(t)\eqqcolon w^0(t;s,w_1),\qquad t\ge s\ge 0,\quad w^0(s)=w_1\in\R,\\
  w^h(t)\eqqcolon w^h(t;s,w_1),\qquad t\ge s\ge 0,\quad w^h(s)=w_1\in\R.
 \end{align*}
 Notice that by the assumption~\eqref{bar.eps} the initial condition $w_0$ satisfies
 \begin{equation*}
  0\le \norm{w_0}{\R}<\left(\fractx{\breve C_1}{\breve C_2}\right)^\frac1{p},
 \end{equation*}
which due to Proposition~\ref{P:odeh0} implies that~$w^0(t)$ is defined for all~$t\ge0$ and satisfies~\eqref{est.wh0}.
 We also know that~$w^h(t)$ will be defined for $t\ge 0$ in a maximal time interval, say
 for $t\in(0,\tau^h)$ with~$\tau^h>0$. We show now
 that~$\tau^h=+\infty$. Indeed if~$\tau^h\ne+\infty$ then we would have that
 \begin{equation}\label{blowup-wh}
  \lim_{t\nearrow\tau^h}\norm{w^h(t)}{\R}=+\infty. 
 \end{equation}
 Thus we want to show that~\eqref{blowup-wh} does not hold with (finite)~$\tau^h\in\R_{0}$.
 Let us fix an arbitrary~$\tau_1\in(0,\tau^h)$, then both
 solutions remain bounded in~$[0,\tau_1]$. That is, for a suitable large enough~$\rho>0$,
 \[
 \{w^0(t),w^h(t)\}\in(w_0-\rho,w_0+\rho), \quad\mbox{for all}\quad t\in[0,\tau_1].
 \]

 From~\cite[Lem.~3]{Brauer66}, since~\eqref{Linwh0} is the linearization of~\eqref{dyn-odewh0}, we know that we can write
 \begin{align*}
  w^h(t;0,w_0)&=w^0(t;0,w_0)+\int_{0}^t \ZZ_{w^h(s;0,w_0)}(t;s)h(s)\,\ed s,\qquad t\in[0,\tau_1].
 \end{align*}
Next we prove that  we actually have
\begin{equation}\label{norm-tau1}
 \norm{w^h(\tau_1)}{\R}\le\norm{w_0}{\R}+(\fractx{r}{r-1}\overline\e)^{-\fractx{r-1}{r}}\norm{h}{L^r(\R_{0},\R)}
 .
 \end{equation}
 For this purpose, let~$h\ne0$ and suppose that there exists~$\tau_2\in(0,\tau_1)$ such that
 \begin{subequations}\label{n-tau2}
  \begin{align}
 \norm{w^h(\tau_2)}{\R}&=\norm{w_0}{\R}+(\fractx{r}{r-1}\overline\e)^{-\fractx{r-1}{r}}\norm{h}{L^r(\R_{0},\R)}
 ,\label{n-tau2a}\\
 \norm{w^h(t)}{\R}&<\norm{w_0}{\R}+(\fractx{r}{r-1}\overline\e)^{-\fractx{r-1}{r}}\norm{h}{L^r(\R_{0},\R)}
 ,\quad\mbox{for all}\quad t\in[0,\tau_2).
 \end{align}
 \end{subequations}

 From~\eqref{z-stab}, we find that
 \begin{align}\label{norm-tau2}
  \norm{w^h(\tau_2;0,w_0)}{\R}&\le \ex^{-\e\tau_2}\norm{w_0}{\R}+\int_{0}^{\tau_2} \ex^{-\widetilde\e(\tau_2-s)}\norm{h(s)}{\R}\,\ed s
  \le \ex^{-\e\tau_2}\norm{w_0}{\R}+(\fractx{r}{r-1}\widetilde\e)^{-\fractx{r-1}{r}}\norm{h}{L^r((0,\tau_2),\R)},
 \end{align}
 which combined with~\eqref{n-tau2a} and with the fact that~$\overline\e<\widetilde\e$, gives
 us~$(\fractx{r}{r-1}\overline\e)^{-\fractx{r-1}{r}}>(\fractx{r}{r-1}\widetilde\e)^{-\fractx{r-1}{r}}$ and
 \[
 \norm{w_0}{\R}-\ex^{-\e\tau_2}\norm{w_0}{\R}
 \le-(\fractx{r}{r-1}\overline\e)^{-\fractx{r-1}{r}}\left(\norm{h}{L^r(\R_{0},\R)}-\norm{h}{L^r((0,\tau_2),\R)}\right),
 \]
which in turn implies
\[
w_0=0 \quad\mbox{and}\quad\norm{h}{L^r(\R_{\tau_2},\R)}=0.
\]
That is,~$h$ vanishes for~$t>\tau_2$ and we obtain~$w^h(\tau_1;\tau_2,w^h(\tau_2))=w^0(\tau_1;\tau_2,w^h(\tau_2))$, and
\[
 \norm{w^h(\tau_1;\tau_2,w^h(\tau_2))}{\R}
 %=\norm{w^0(\tau_1;\tau_2,w^h(\tau_2))}{\R}
 \le \ex^{-\e(\tau_1-\tau_2)}\norm{w^h(\tau_2)}{\R}
 \le\norm{w_0}{\R}+(\fractx{r}{r-1}\overline\e)^{-\fractx{r-1}{r}}\norm{h}{L^r(\R_{0},\R)}
 .
\]

Therefore, with~$h\ne0$, if there is~$\tau_2\in(0,\tau_1)$ satisfying~\eqref{n-tau2}, then~\eqref{norm-tau1} holds true.
Of course, if there is no such~$\tau_2$ then necessarily~\eqref{norm-tau1} holds true.
 
Since~\eqref{norm-tau1} holds true for arbitrary~$\tau_1<\tau^h$, and
since~$\norm{w_0}{\R}+(\fractx{r}{r-1}\overline\e)^{-\fractx{r-1}{r}}\norm{h}{L^r(\R_{0},\R)}$
is independent of~$\tau_1$. it follows that necessarily~\eqref{blowup-wh} cannot hold with~$\tau^h\in\R$.
Therefore~$\tau^h=+\infty$, and \eqref{norm-tau1} and~\eqref{norm-tau2} hold true for all~$\tau_1\ge 0$ and all~$\tau_2\ge0$.
That is, the estimates in~\eqref{est.wh0a} are satisfied.
   \qed\end{proof}

%%%%%%%%%%%%%%%%%%%%%%%%%%%%%%%%%%%%%%%
\subsection{Proof of Theorem~\ref{T:wsol_QqV}}\label{sS:ProofT:wsol_QqV}
We can show the existence of the solution as a weak limit of Galerkin approximations of the system,
following a standard argument.
By taking the scalar product, in~$H$, with $2AQ$ in~\eqref{sys-split-Q}, we obtain 
\begin{align*}
\frac{\ed}{\ed t}\norm{Q}{V}^2&=-2 \norm{Q}{\D(A)}^2
-2\Bigl( P_{E_{\M_\sigma}^\perp}^{U_{\#\M_\sigma}}\Bigl(A_{\rm rc}(t)Q+\NN(t,q,Q)\Bigr),AQ\Bigr)_H  +2(\MM(q),AQ)_H,\qquad t>0.
\end{align*}

Using Assumption~\ref{A:suffalpha}, we fix
 a quadruple~$\gamma=(\gamma_1,\gamma_2,\gamma_3,\overline\e)\in\R_0^4$ satisfying~\eqref{suffalpha}.
 From Assumption~\ref{A:A1},
 \begin{subequations}\label{Young-qQ}
 \begin{align}
  2\Bigl( P_{E_{\M_\sigma}^\perp}^{U_{\#\M_\sigma}}A_{\rm rc}(t)Q,AQ\Bigr)_H&\le\gamma_1\norm{Q}{\D(A)}^2
  +\gamma_1^{-1}\norm{P_{E_{\M_\sigma}^\perp}^{U_{\#\M_\sigma}}}{\LL(H)}^2 C_{\rm rc}^2\norm{Q}{V}^2,\\
  2(\MM(q),AQ)_H&\le\gamma_2\norm{Q}{\D(A)}^2+\gamma_2^{-1}\norm{\MM(q)}{H}^{2}
  \end{align}
 and, from~\eqref{NNQAQ}, with~$\widehat\gamma_0=\gamma_3$, we find
 \begin{align}
 &2\Bigl(  P_{E_{\M_\sigma}^\perp}^{U_{\#\M_\sigma}}\NN(t,q,Q),AQ\Bigr)_{H} \le \gamma_3 \norm{Q}{\D(A)}^{2}\notag\\
  &\hspace*{3em}
  +\gamma_3^{-\frac{2}{1-\|\zeta_2+\delta_2\|} }
  \overline C_{\NN2} \left( 1 +\norm{q}{V}^{2\|\frac{\zeta_1+\delta_1}{1-\zeta_2-\delta_2}\|}\right)
 \left(1+\norm{q}{\D(A)}^{\|\frac{\zeta_2}{1-\delta_2}\|}\right)
 \left( 1+\norm{Q}{V}^{2\|\frac{\zeta_1+\delta_1}{1-\zeta_2-\delta_2}\|-2}
 \right)\norm{Q}{V}^{2}.
 \end{align}
 \end{subequations}
Hence,
 the estimates in~\eqref{Young-qQ} lead us to 
 \begin{align}\label{dynnormQDA}
\frac{\ed}{\ed t}\norm{Q}{V}^2
&\le-\mathfrak a_0 \norm{Q}{\D(A)}^2
+\left(\mathfrak a_1+\mathfrak a_2\mathfrak q(1+\norm{Q}{V}^{2\mathfrak p})
\right)\norm{Q}{V}^{2}+\mathfrak h,
\end{align}
with~$\mathfrak a_0$, $\mathfrak a_1$, $\mathfrak a_2$, $\mathfrak q$, $\mathfrak p$,
and~$\mathfrak h$ as in~\eqref{frak-apr}.

Recall that for~$(q,Q)\in E_{\M_\sigma}\times E_{\M_\sigma}^\perp$,
we have~$\norm{Q}{\D(A)}^2\ge\alpha_{\M_{\sigma+}}\norm{Q}{V}^2$ and~$\norm{q}{\D(A)}^2\le\alpha_{\M_\sigma}\norm{q}{V}^2$,
see~\eqref{VDAalphaM}.
Thus,
\begin{align*}
 \frac{\ed}{\ed t}\norm{Q}{V}^2
 &\le-\bigl(\mathfrak a_0\alpha_{\M_{\sigma+}}-\mathfrak a_1-\mathfrak a_2\mathfrak q
 -\mathfrak a_2\mathfrak q\norm{Q}{V}^{2\mathfrak p}\bigr) \norm{Q}{V}^2
 +\mathfrak h,
 \end{align*}
and we conclude that~$\norm{Q(t)}{V}^2\le w(t)$, where $w$ solves
\begin{align}\label{dyn-odew}
 \dot w
 &=-\bigl(\mathfrak a_0\alpha_{\M_{\sigma+}}-\mathfrak a_1-\mathfrak a_2\mathfrak q
 -\mathfrak a_2\mathfrak q\norm{w}{\R}^{\mathfrak p}\bigr) w +\mathfrak h
,\qquad
 w(0)=\norm{Q_0}{V}^2.
 \end{align}

 Note that, from Assumption~\ref{A:suffalpha},
 we have~$\mathfrak a_0\alpha_{\M_{\sigma+}}-\mathfrak a_1-\mathfrak a_2\mathfrak q-\overline\e>0$ and
 \begin{align*}
   \norm{Q_0}{V}^2+(\fractx{r}{r-1}\overline\e)^{-\frac{r-1}{r}}\norm{\mathfrak h}{L^r(\R_{0},\R)}&
  <\left(\frac{\mathfrak a_0\alpha_{\M_{\sigma+}}-\mathfrak a_1-\mathfrak a_2\mathfrak q
  -\overline\e}{(\mathfrak p+1)\mathfrak a_2\mathfrak q}\right)
  ^\frac1{\mathfrak p},
  \end{align*}
  which shows that the requirements in Lemma~\ref{L:odeh} are fulfilled with 
\begin{align*}
 &w_0=\norm{Q_0}{V}^2,\qquad p=\mathfrak p,
 \qquad h=\mathfrak h,\qquad
 C_1=\mathfrak a_0\alpha_{\M_{\sigma+}}-\mathfrak a_1-\mathfrak a_2\mathfrak q,\qquad\mbox{and}\qquad
 C_2=\mathfrak a_2\mathfrak q.
\end{align*}
Thus, with $\e\coloneqq C_1 -C_2\norm{w_0}{\R}^{p}$ and
$\widetilde\e=C_1-C_2(p+1)(\norm{w_0}{\R}+(\fractx{r}{r-1}\overline\e)^{-\frac{r-1}{r}}\norm{h}{L^r(\R_{0},\R)}
)^p$, we arrive at~\eqref{sys-Q0-est}.

We have just proven that~\eqref{sys-Q0-est} holds true, for any given strong solution. 
The existence of a strong solution follows from the fact that the previous estimates hold true for Galerkin
approximations~$Q^N$ taking values in the finite-dimensional
space~$E_{\M_{\sigma}}^{\perp,N}\coloneqq P_{E_N}E_{\M_{\sigma}}^{\perp,N}= E_{\M_{\sigma}}^{\perp}\bigcap E_N$,
\[
 N\in\N,\qquad E_N\coloneqq\linspan\{e_i\mid i\in\{1,2,\dots,N\}\}
\]
and $P_{E_N}\colon H\to E_N$ is the orthogonal projection in~$H$ onto~$E_N$, which solve the finite-dimensional system
\begin{align*}
    \dot Q^N + P_{E_N}P_{E_{\M_\sigma}^\perp}^{U_{\#\M_\sigma}}\Bigl(AQ^N +A_{\rm rc}(t)Q^N+\NN(t,q,Q^N)\Bigr)
    &=P_{E_N}\MM(q),&\quad  Q^N(0)&=Q^N_0\coloneqq P_{E_N}Q_0.
\end{align*}

Let us fix an arbitrary~$s>0$. 
Hence, from (the analogous to)~\eqref{sys-Q0-est} we find~$\norm{Q^N}{L^\infty((0,s),V)}\le C_3$,
where~$C_3$ can be taken independent of~$N$
and~$s$. Then, by integrating~\eqref{dynnormQDA} we obtain that
 \begin{align*}
\norm{Q(s)}{V}^{2}+\mathfrak a_0\norm{Q^N}{L^2((0,s),\D(A))}^2&\le\norm{Q(0)}{V}^{2}
+\norm{\mathfrak h}{L^1((0,s),\R)}\\
&\quad+s\left((\mathfrak a_1+\mathfrak a_2\mathfrak q)\norm{Q^N}{L^\infty((0,s),V)}^2
+\mathfrak a_2\mathfrak q\norm{Q^N}{L^\infty((0,s),V)}^{2\mathfrak p+2}\right).
 \end{align*}
Since~$\mathfrak a_0>0$, because~$\mathfrak a_0>\frac{\mathfrak a_1+\mathfrak a_2\mathfrak q}{\alpha_{\M_{\sigma+}}}>0$, 
we conclude that~$\norm{Q^N}{L^2((0,s),\D(A))}^2\le C_4$, where $C_4$ can be taken independent of~$N$.
Finally, from Assumption~\ref{A:A1}, ~\eqref{sys-split-Q}, ~\eqref{NNyAy}, and~$q\in L^\infty(\R_{0},\D(A))$, it follows that
 \begin{align*}
    \norm{\dot Q^N}{H} &\le C_5 \left(1+\norm{Q^N}{\D(A)} +\norm{\MM(q)}{H}\right),
\end{align*}
from which we have that $\norm{\dot Q^N}{L^2((0,s),H)}^2\le C_7$, with $C_7$ independent of~$N$.
Thus, we can conclude the existence of the weak limit~$Q^\infty$ of a suitable subsequence of~$Q^N$ (that we still denote~$Q^N$):
 \begin{align*}
  Q^N\xrightharpoonup[L^2((0,s),\D(A))]{} Q^\infty\qquad\mbox{and}\qquad
  \dot Q^N\xrightharpoonup[L^2((0,s),H)]{} \dot Q^\infty,
 \end{align*}
then we can assume the strong convergence $Q^N\xrightarrow[L^2((0,s),V)]{} Q^\infty$,
because~$W((0,s),\D(A),H)\xhookrightarrow{\rm c}L^2((0,s),V)$,
see~\cite[Ch.~3, Sect..~2.2, Thm.~2.1]{Temam01}.  
Next, we show that
\begin{align}
&\qquad\dot Q^N + P_{E_N}P_{E_{\M_\sigma}^\perp}^{U_{\#\M_\sigma}}\Bigl(AQ^N +A_{\rm rc}(t)Q^N+\NN(t,q,Q^N)\Bigr)
-P_{E_N}\MM(q)\notag\\
\xrightharpoonup[L^2((0,s),H)]{}&\qquad\dot Q^\infty 
+ P_{E_{\M_\sigma}^\perp}^{U_{\#\M_\sigma}}\Bigl(AQ^\infty +A_{\rm rc}(t)Q^\infty+\NN(t,q,Q^\infty)\Bigr)-\MM(q)\,
\label{Gal.limit}
\end{align}
from which we can conclude that the limit~$Q^\infty$ solves~\eqref{sys-split-Q}.
We know that~$\dot Q^N\xrightharpoonup[L^2((0,s),H)]{} \dot Q^\infty$, and 
\[
AQ^N +A_{\rm rc}(t)Q^N
\xrightharpoonup[L^2((0,s),H)]{} AQ^\infty +A_{\rm rc}(t)Q^\infty
\]
follows straightforwardly. Since $q\in L^2((0,s),\D(A))$ is fixed, we also have~$P_{E_N}\MM(q)
\xrightarrow[L^2((0,s),H)]{} \MM(q)$.

Hence, since~$P_{E_N}P_{E_{\M_\sigma}^\perp}^{U_{\#\M_\sigma}}\in\LL(H)$, to show~\eqref{Gal.limit} it remains to show
\[
\NN(t,q,Q^N)\xrightharpoonup[L^2((0,s),H)]{} \NN(t,q,Q^\infty).
\]
Actually, we  have strong convergence
$\NN(t,q,Q^N)
\xrightarrow[L^2((0,s),H)]{} \NN(t,q,Q^\infty)$. Indeed,
from 
$Q^\infty\in W((0,s),\D(A),H)\xhookrightarrow{}C([0,s],V)$ and the fact that the
sequence~$Q^N$ is uniformly bounded in the space $W((0,s),\D(A),H)$, from Assumption~\ref{A:NN} and
the H\"older inequality, 
with~$y_1=q+Q^N$ and~$y_2=q+Q^\infty$, it follows that, with~$D^N\coloneqq Q^N-Q^\infty$, and since~$\delta_{2j}+\zeta_{2j}<1$,
\begin{align*}
 &\norm{\NN(t,q+Q^N)-\NN(t,q+Q^\infty)}{L^2((0,s),H)} \notag\\
&\hspace*{.8em}\le C_{\NN}\textstyle\sum\limits_{j=1}^n\norm{\left( \norm{y_1}{V}^{\zeta_{1j}}\norm{y_1}{\D(A)}^{\zeta_{2j}}
 +\norm{y_2}{V}^{\zeta_{1j}}\norm{y_2}{\D(A)}^{\zeta_{2j}}\right)
 \norm{D^N}{\D(A)}^{\delta_{2j}}}{L^\frac{2}{\zeta_{2j}+\delta_{2j}}((0,s),\R)}
 \norm{\norm{D^N}{V}^{\delta_{1j}}}{L^\frac{2}{1-\zeta_{2j}-\delta_{2j}}((0,s),\R)} \\
 &\hspace*{.8em}\le C_8\textstyle\sum\limits_{j=1}^n\textstyle\sum\limits_{k=1}^2\norm{\norm{y_k}{\D(A)}^{\zeta_{2j}}
  \norm{D^N}{\D(A)}^{\delta_{2j}}}{L^\frac{2}{\zeta_{2j}+\delta_{2j}}((0,s),\R)}
 \norm{D^N}{L^{2}((0,s),V)}^{1-\zeta_{2j}-\delta_{2j}}.
\end{align*}
From $\delta_{1j}+\delta_{2j}\ge1$, it follows that~$\frac{2\delta_{1j}}{1-\zeta_{2j}-\delta_{2j}}\ge 2$
and~$\norm{\norm{D^N}{V}^{\delta_{1j}}}{L^\frac{2}{1-\zeta_{2j}-\delta_{2j}}((0,s),\R)}
\le C_9\norm{D^N}{L^{2}((0,s),V)}^{1-\zeta_{2j}-\delta_{2j}}$,
because~$D^N$ is uniformly bounded in~$L^{\infty}((0,s),V)$.
Observe also that by the Young inequality
\[
 \norm{y_k}{\D(A)}^{\frac{2\zeta_{2j}}{\zeta_{2j}+\delta_{2j}}}
  \norm{D^N}{\D(A)}^{\frac{2\delta_{2j}}{\zeta_{2j}+\delta_{2j}}}
  \le
  \norm{y_k}{\D(A)}^{2}
  +\norm{D^N}{\D(A)}^{2},
\]
which leads us to
\begin{align*}
\norm{\norm{y_k}{\D(A)}^{\zeta_{2j}}
  \norm{D^N}{\D(A)}^{\delta_{2j}}}{L^\frac{2}{\zeta_{2j}+\delta_{2j}}((0,s),\R)}
  &\le \norm{\norm{y_k}{\D(A)}^{\frac{2\zeta_{2j}}{\zeta_{2j}+\delta_{2j}}}
  \norm{D^N}{\D(A)}^{\frac{2\delta_{2j}}{\zeta_{2j}+\delta_{2j}}}}{L^1((0,s),\R)}^\frac{\zeta_{2j}+\delta_{2j}}{2}\\
  &\le \left(\norm{y_k}{L^2((0,s),\D(A))}^{2}+\norm{D^N}{L^2((0,s),\D(A))}^2\right)^{\frac{\zeta_{2j}+\delta_{2j}}{2}}
  \end{align*}
and consequently to
\begin{align*}
 &\norm{\NN(t,q+Q^N)-\NN(t,q+Q^\infty)}{L^2((0,s),H)} \le C_{10}\norm{D^N}{L^{2}((0,s),V)}^{1-\zeta_{2j}-\delta_{2j}}
 \xrightarrow[N\to+\infty]{} 0.
\end{align*}

To finish the proof of Theorem~\ref{T:wsol_QqV}, it remains to prove the uniqueness  in~$W((0,s),\D(A),H)$.
For this purpose, observe
that given two solutions~$Q_1$ and~$Q_2$ in~$W((0,s),\D(A),H)$, we find that $G=Q_2-Q_1\in W((0,s),\D(A),H)$ solves
\begin{align*}
    \dot G + P_{E_{\M_\sigma}^\perp}^{U_{\#\M_\sigma}}\Bigl(AG +A_{\rm rc}(t)G+\NN(t,q,Q_2)-\NN(t,q,Q_1)\Bigr)
    &=0,&\quad  G(0)&=0\in E_{\M_\sigma}^\perp.
\end{align*}
Thus, from~\eqref{NNyAy} with~$\widehat\gamma_0=1$, and the Young inequality,
with~$y_1=q+Q^N$ and~$y_2=q+Q^\infty$,  it follows
\begin{align*}
 &2\Bigl( P_{E_{\M_\sigma}^\perp}^{U_{\#\M_\sigma}}\left(\NN(t,y_1)-\NN(t,y_2)\right),AG\Bigr)_{H} \notag\\
&\hspace*{2em}\le  \norm{G}{\D(A)}^{2}
  +
 \overline C_{\NN1}\sum\limits_{j=1}^n\left( \norm{y_1}{V}^{\frac{2\zeta_{1j}}{1-\delta_{2j}-\zeta_{2j}}}
 +\norm{y_2}{V}^{\frac{2\zeta_{1j}}{1-\delta_{2j}-\zeta_{2j}}}
 + \norm{y_1}{\D(A)}^2+\norm{y_2}{\D(A)}^2
 \right)\norm{G}{V}^{\frac{2\delta_{1j}}{1-\delta_{2j}}}\\
 &\hspace*{2em}=  \norm{G}{\D(A)}^{2}
  +
 \Phi(t)\norm{G}{V}^{2} ,
\end{align*}
with~$\Phi(t)\coloneqq\overline C_{\NN1}\sum\limits_{j=1}^n\left( \norm{y_1}{V}^{\frac{2\zeta_{1j}}{1-\delta_{2j}-\zeta_{2j}}}
 +\norm{y_2}{V}^{\frac{2\zeta_{1j}}{1-\delta_{2j}-\zeta_{2j}}}
 + \norm{y_1}{\D(A)}^2+\norm{y_2}{\D(A)}^2
 \right)\norm{G}{V}^{\frac{2\delta_{1j}}{1-\delta_{2j}}-2}$. Recall that $\frac{2\delta_{1j}}{1-\delta_{2j}}\ge2$.

By using Assumption~\ref{A:A1} and~\eqref{Young-qQ} with~$\gamma_1=1$, we find
\begin{align*}
    \frac{\ed}{\ed t}\norm{G}{V}^2 
    &\le -2\norm{G}{\D(A)}^2+\norm{G}{\D(A)}^{2}+ \Phi(t)\norm{G}{V}^{2}+\norm{G}{\D(A)}^{2}
    + \norm{P_{E_{\M_\sigma}^\perp}^{U_{\#\M_\sigma}}}{\LL(H)}^2C_{\rm rc}^2\norm{G}{V}^{2}\le\Phi_2(t)\norm{G}{V}^{2}.
   \end{align*} 
with~$\Phi_2(t)\coloneqq\norm{P_{E_{\M_\sigma}^\perp}^{U_{\#\M_\sigma}}}{\LL(H)}^2C_{\rm rc}^2+\Phi(t)$. 
Now, we see that~$\Phi_2$ is integrable on~$(0,s)$, due
to~$q\in L^\infty([0,s],\D(A))$ and $\{Q_1,Q_2\}\subset C([0,s],V)\textstyle\bigcap
 L^2((0,s),\D(A))$. Hence, by the Gronwall inequality,
\[
 \norm{G(t)}{V}^2\le\ex^{\int_{0}^t\Phi_2(\tau)\,\ed \tau}\norm{G(0)}{V}^2=0, \quad\mbox{for all}\quad t\in[0,s].
\]
That is, $G=0$ and~$Q_2=Q_1+G=Q_1$. We have shown that for arbitrary~$s> 0$ there exists one,
and only one, strong solution
$Q\in W((0,s),\D(A),H)$ for~\eqref{sys-split-Q}. In other words,
there exists one, and only one, global solution~$Q\in W_{\rm loc}(\R_0,\D(A),H)$ for~\eqref{sys-split-Q}. This finishes
the proof of Theorem~\ref{T:wsol_QqV}.
\qed

%%%%%%%%%%%%%%%%%%%%%%%%%%%%%%%%%%%%%%%
\subsection{Proof of Theorem~\ref{T:main.1pair}}
Theorem~\ref{T:main.1pair} 
follows from the following Theorem~\ref{T:main.1pairF}.\qed

\begin{theorem}\label{T:main.1pairF}
If Assumptions~\ref{A:A0sp}--\ref{A:NN} and~\ref{A:FFM}--\ref{A:suffalpha} are satisfied, then 
system~\eqref{sys-split-qQ} is exponentially stable.  The solution~$y=q+Q$, satisfies~$\norm{y(t)}{V}
 \le \overline C\ex^{-\frac{\mu}{2}t}\norm{y(0)}{V}$, for all~$t\ge0$,
 where~$\mu<\min\{\widetilde\e,2\lambda\}$ and~$\widetilde\e$ is
 as in~\eqref{eps.tildeeps}. Furthermore, ~$\overline C=\ovlineC{n,\|P\|_\LL,C_{\rm rc},C_\NN,\frac{1}{\widetilde\e},C_{1q},
   \norm{q(0)}{V},\frac{1}{\gamma_3},\frac{1}{\widetilde\e-\mu},\alpha_{\M_\sigma}}
   $.
 \end{theorem}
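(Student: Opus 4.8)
The plan is to exploit the skew-coupled (triangular) structure of the split system~\eqref{sys-split-qQ}: the $E_{\M_\sigma}$-component $q$ solves the autonomous finite-dimensional equation~\eqref{sys-split-q}, which does not see $Q$, while the $E_{\M_\sigma}^\perp$-component $Q$ solves~\eqref{sys-split-Q}, driven by $q$ through the forcing $\MM(q)$ in~\eqref{MM}. First I would solve~\eqref{sys-split-q}: since $E_{\M_\sigma}$ is finite-dimensional and $\FF_{\M_\sigma}$ is continuous, a solution $q$ exists locally, and the a priori bound $\norm{q(t)}{V}\le C_{q1}\ex^{-\lambda t}\norm{q(0)}{V}$ from Assumption~\ref{A:FFM} both rules out blow-up (so $q$ is global) and shows $q\in L^\infty(\R_0,E_{\M_\sigma})$; moreover $\norm{q(t)}{\D(A)}^2\le\alpha_{\M_\sigma}\norm{q(t)}{V}^2$ gives $q\in L^\infty(\R_0,\D(A))$, so in particular the quantity $\mathfrak q$ in~\eqref{frak-apr} is finite. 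With this $q$ fixed, Theorem~\ref{T:wsol_QqV} supplies a unique global strong solution $Q\in W_{\rm loc}(\R_0,\D(A),H)$ of~\eqref{sys-split-Q} together with the estimates~\eqref{sys-Q0-est}, and $y\coloneqq q+Q$ is the sought solution of~\eqref{sys-split-qQ}.

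The heart of the argument is to upgrade the first estimate in~\eqref{sys-Q0-est}, namely $\norm{Q(t)}{V}^2\le\ex^{-\e t}\norm{Q_0}{V}^2+\int_0^t\ex^{-\widetilde\e(t-s)}\norm{\mathfrak h(s)}{\R}\,\ed s$, from mere boundedness to genuine exponential decay. Since $\e\ge\widetilde\e>\mu$ (cf.~\eqref{eps.tildeeps}), the first term is already $\le\ex^{-\mu t}\norm{Q_0}{V}^2$, so everything rests on the convolution term. The key observation is that the forcing $\mathfrak h=\gamma_2^{-1}\norm{\MM(q)}{H}^2$ inherits exponential decay from $q$. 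Concretely, for each $\tau\ge0$ the time-translate $q(\tau+\Bigcdot)$ again solves~\eqref{sys-split-q} with datum $q(\tau)$, so re-running the derivation of~\eqref{normhLr} on it (the bounds on $A_{\rm rc}$ and $\NN$ being uniform in $t$, and those of Assumption~\ref{A:FFM} holding for every solution) yields $\norm{\mathfrak h}{L^{\mathfrak r}(\R_\tau,\R)}\le\widehat C\norm{q(\tau)}{V}^2\le\widehat C\,C_{q1}^2\ex^{-2\lambda\tau}\norm{q(0)}{V}^2$, with $\widehat C$ depending only on the quantities allowed in $\overline C$. A dyadic splitting of $\R_0$ then shows, using $\mu<2\lambda$, that $s\mapsto\ex^{\mu s}\norm{\mathfrak h(s)}{\R}$ lies in $L^{\mathfrak r}(\R_0,\R)$; feeding this into the convolution via H\"older,
\[
\ex^{\mu t}\!\int_0^t\!\ex^{-\widetilde\e(t-s)}\norm{\mathfrak h(s)}{\R}\,\ed s
=\int_0^t\!\ex^{-(\widetilde\e-\mu)(t-s)}\ex^{\mu s}\norm{\mathfrak h(s)}{\R}\,\ed s
\le\norm{\ex^{-(\widetilde\e-\mu)\Bigcdot}}{L^{\frac{\mathfrak r}{\mathfrak r-1}}(\R_0)}\,\norm{\ex^{\mu\Bigcdot}\mathfrak h}{L^{\mathfrak r}(\R_0)},
\]
which is finite precisely because $\widetilde\e>\mu$. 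Hence the convolution term is $\le\overline C^2\ex^{-\mu t}\norm{q(0)}{V}^2$, and altogether $\norm{Q(t)}{V}^2\le\overline C^2\ex^{-\mu t}\bigl(\norm{Q_0}{V}^2+\norm{q(0)}{V}^2\bigr)$.

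Finally I would recombine the two components. The datum splits \emph{orthogonally in $V$}: for $q_0\in E_{\M_\sigma}$ and $Q_0\in E_{\M_\sigma}^\perp$ one has $(q_0,Q_0)_V=(Aq_0,Q_0)_H=0$, since $Aq_0\in E_{\M_\sigma}$, whence $\norm{y(0)}{V}^2=\norm{q_0}{V}^2+\norm{Q_0}{V}^2$. Using $\mu\le2\lambda$ to write $\norm{q(t)}{V}^2\le C_{q1}^2\ex^{-\mu t}\norm{q(0)}{V}^2$ and adding the bound for $\norm{Q(t)}{V}^2$ gives $\norm{y(t)}{V}^2=\norm{q(t)}{V}^2+\norm{Q(t)}{V}^2\le\overline C^2\ex^{-\mu t}\norm{y(0)}{V}^2$, i.e.\ the claimed $\norm{y(t)}{V}\le\overline C\ex^{-\frac{\mu}{2}t}\norm{y(0)}{V}$, with $\overline C$ an increasing function of the listed arguments.

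The main obstacle is exactly the upgrade in the second paragraph: Theorem~\ref{T:wsol_QqV} only delivers a uniform-in-time bound together with a convolution estimate, and converting this into exponential decay at the essentially optimal rate $\min\{\widetilde\e,2\lambda\}$ requires the \emph{quantitative $L^{\mathfrak r}$-tail decay} of $\mathfrak h$. Establishing that tail decay hinges on applying~\eqref{normhLr} to the time-shifted trajectory and verifying that its constant does not deteriorate as $\tau$ grows (this is where the $\M_\sigma$-independence of the constants in Assumption~\ref{A:FFM}, the bound $\norm{q(\tau)}{V}\le C_{q1}\norm{q(0)}{V}$, and the finiteness of $\mathfrak q$ are used). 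A naive splitting of the convolution integral at $t/2$ would only yield the suboptimal rate $\min\{\lambda,\widetilde\e/2\}$, so the weighted-$L^{\mathfrak r}$ / Young-convolution bookkeeping, together with the strict inequality $\mu<\min\{\widetilde\e,2\lambda\}$, is essential.
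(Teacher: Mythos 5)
Your proposal is correct and follows the same skeleton as the paper's proof (solve the finite-dimensional $q$-equation, invoke Theorem~\ref{T:wsol_QqV} for $Q$, upgrade the convolution estimate in~\eqref{sys-Q0-est} to exponential decay, and recombine using the $V$-orthogonality of $E_{\M_\sigma}$ and $E_{\M_\sigma}^\perp$), but you implement the central step differently. Where you derive the $L^{\mathfrak r}$-tail bound $\norm{\mathfrak h}{L^{\mathfrak r}(\R_\tau,\R)}\le \widehat C\,\ex^{-2\lambda\tau}\norm{q(0)}{V}^2$ by rerunning~\eqref{normhLr} on the time-shifted trajectory and then close with a dyadic decomposition and a weighted H\"older/Young convolution inequality, the paper simply bounds the forcing \emph{pointwise in time}: since $q(s)$ lives in the finite-dimensional space $E_{\M_\sigma}$, one has $\norm{q(s)}{\D(A)}^2\le\alpha_{\M_\sigma}\norm{q(s)}{V}^2$, so every term in $\norm{\MM(q(s))}{H}^2$ (each carrying a total power of $\norm{q(s)}{V}$ at least $2$, including the $\FF_{\M_\sigma}$-term via $\xi\ge1$) is dominated by $\ovlineC{\dots,\norm{q(0)}{V},\alpha_{\M_\sigma}}\,\ex^{-2\lambda s}\norm{q(0)}{V}^2$, and then the elementary computation $\int_0^t\ex^{-\widetilde\e(t-s)}\ex^{-\mu s}\,\ed s\le\norm{\widetilde\e-\mu}{\R}^{-1}\ex^{-\mu t}$ already gives the optimal rate $\mu<\min\{\widetilde\e,2\lambda\}$ — no $L^{\mathfrak r}$ bookkeeping and no splitting of the integral are needed. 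Your route is heavier but perfectly valid, and it is the more robust one: it would survive in a situation where only time-integrated (rather than pointwise) decay information on the forcing were available, whereas the paper's shortcut relies essentially on the finite-dimensionality of $E_{\M_\sigma}$ to convert $\D(A)$-norms of $q$ into $V$-norms pointwise. Your closing remark about the $t/2$-splitting being suboptimal is accurate but moot for the paper's argument, since the direct pointwise bound bypasses that issue entirely.
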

 \begin{proof}
We have $q\in L^\infty(\R_{0},\D(A))$ because $q\in L^\infty(\R_{0},H)$ and~$E_{\M_\sigma}$ is 
finite dimensional,~$E_{\M_\sigma}\subset\D(A)\subset H$.
 By Theorem~\ref{T:wsol_QqV},  we conclude that~$Q$ satisfies, for all $t\ge 0$,
\begin{align*}
 \norm{Q(t)}{V}^2&\le \ex^{-\e t} \norm{Q(0)}{V}^2
 + \int_{0}^{t} \ex^{-\widetilde\e(t-s)}\gamma_3^{-1}\norm{\MM(q(s))}{H}^2\,\ed s.
   \end{align*}
Hence we obtain,  
   using Assumptions~\ref{A:NN} and~\ref{A:FFM},
 \begin{align*}
 &\quad\norm{Q(t)}{V}^2-\ex^{-\e t} \norm{Q(0)}{V}^2\\
  &\le
  \ovlineC{n,\|P\|_\LL,C_{\rm rc},C_\NN}\gamma_3^{-1}\int_{0}^{t} \ex^{-\widetilde\e(t-s)}
  \biggl(\norm{Aq(s)}{H}^2+\norm{q(s)}{V}^{2}
 +\textstyle\sum\limits_{j=1}^n\norm{q(s)}{V}^{2( \delta_{1j}+\zeta_{1j} )}\norm{q(s)}{\D(A)}^{2(\delta_{2j}+\zeta_{2j})}\\
&\hspace*{25em}+\norm{\FF_{\M_\sigma}(q(s)) }{H}^2\biggr)\,\ed s\\
  &\le  \ovlineC{n,\|P\|_\LL,C_{\rm rc},C_\NN,C_{q0},C_{q3}}\gamma_3^{-1}
  \int_{0}^{t} \ex^{-\widetilde\e(t-s)}\biggl(\alpha_{\M_\sigma}+1
    +\textstyle\sum\limits_{j=1}^n\alpha_{\M_\sigma}^{\delta_{2j}+\zeta_{2j}}
    \norm{q(s)}{V}^{2( \delta_{1j}+\zeta_{1j}+\delta_{2j}+\zeta_{2j}-1)}\\
  &\hspace*{25em}+\alpha_{\M_\sigma}^{\xi}
    \norm{q(s)}{V}^{2(\xi-1)}\biggr)\norm{q(s)}{V}^{2}\,\ed s
  \\
 &\le  \ovlineC{n,\|P\|_\LL,C_{\rm rc},C_\NN,\frac{1}{\widetilde\e},C_{0q},C_{1q},C_{3q},\norm{q(0)}{V},
 \alpha_{\M_\sigma}}\gamma_3^{-1}
 \norm{q(0)}{V}^{2}\int_{0}^{t} \ex^{-\widetilde\e(t-s)}\ex^{-2\lambda s}\,\ed s.
   \end{align*}

Through straightforward computations we can obtain, with~$\mu<\min\{\widetilde\e,2\lambda\}$, the estimates
\begin{align*}
\int_{0}^{t} \ex^{-\widetilde\e(t-s)}\ex^{-2\lambda s}\,\ed s
\le\int_{0}^{t} \ex^{-\widetilde\e(t-s)}\ex^{-\mu s}\,\ed s
&\le\norm{\widetilde\e-\mu}{\R}^{-1}\ex^{-\mu t},%\quad&\mbox{if}\quad\widetilde\e&\ne2\lambda,\\
\end{align*}
which leads us to
\[
\norm{Q(t)}{V}^2\le \ex^{-\e t} \norm{Q(0)}{V}^2+
  \widehat D\ex^{-\mu t}\norm{q(0)}{V}^{2}
\]
  with~$\widehat D =\ovlineC{n,\|P\|_\LL,C_{\rm rc},C_\NN,\frac{1}{\widetilde\e},C_{0q},C_{1q},C_{3q},
   \norm{q(0)}{V},\alpha_{\M_\sigma}}\gamma_3^{-1}
   \norm{\widetilde\e-\mu}{\R}^{-1}$.
Hence,~$\norm{y(t)}{V}^2=\norm{Q(t)}{V}^2+\norm{q(t)}{V}^2$ satisfies
    \begin{align}
 \norm{y(t)}{V}^2
 &\le \ex^{-\e t} \norm{Q(0)}{V}^2
 +\widehat D\ex^{-\mu t}\norm{q(0)}{V}^{2}
 +C_{q1}^2\ex^{-2\lambda t}\norm{q(0)}{V}^{2}\notag\\
 &\le \ex^{-\e t} \norm{Q(0)}{V}^2
 +(C_{q1}^2+\widehat D)\ex^{-\mu t}\norm{q(0)}{V}^{2}
 \le (1+C_{q1}^2+\widehat D)\ex^{-\mu t}\norm{y(0)}{V}^{2},\qquad t\ge0,\label{stable-s_0}
 \end{align}
which finishes the proof.
\qed\end{proof}

%%%%%%%%%%%%%%%%%%%%%%%%%%%%%%%%%%%%%%%
\subsection{Proof of Theorem~\ref{T:N.goal}}\label{sS:proofT:N.goal}
We show that Theorem~\ref{T:N.goal} follows as a corollary of Theorem~\ref{T:main.1pair}.
 Indeed, let us suppose we have a
sequence~$(U_{\#\M_\sigma},E_{\M_\sigma})_{M\in\N}$ so that~$C_P^M\coloneqq\norm{P_{E_{\M_\sigma}^\perp}^{U_{\#\M_\sigma}}}{\LL(H)}\le C_P$
and~$\fractx{\alpha_{\M_\sigma}}{\alpha_{\M_{\sigma+}}}\le\Lambda$, with~$C_P$ and~$\Lambda$
independent of~$M$.
Let us also fix~$\gamma=\overline\gamma\in\R_0^3$ so that~$\mathfrak a_0=\mathfrak a_0^{\overline\gamma}>0$, and fix also~$\overline\e>0$.

Recalling~\eqref{frak-apr} and~\eqref{MM}, we see
that~${\mathfrak a}_1^{\overline\gamma}$, ~${\mathfrak a}_2^{\overline\gamma}$, and~$\mathfrak h^{\overline\gamma}$, are the only
terms in~\eqref{suffalpha} depending on~$C_P^M$. 
However, these terms remain bounded if~$C_P^M$ does. Hence, defining
\begin{align*}
 \widetilde{\mathfrak a}_1^{\overline\gamma}&\coloneqq{\mathfrak a}_1^{\overline\gamma}(C_P)
 =\overline\gamma_1^{-1}C_P^2 C_{\rm rc}^2>\mathfrak a_1^{\overline\gamma}(C_P^M),\\
 \widetilde{\mathfrak a}_2^{\overline\gamma}&\coloneqq{\mathfrak a}_2^{\overline\gamma}(C_P)
 =\overline\gamma_3^{-\frac{2}{1-\|\zeta_{2}+\delta_{2}\|}}\overline{C}_{\NN2}(C_P)
  >\mathfrak a_2^{\overline\gamma}(C_P^M),\\
  \widetilde{\mathfrak h}^{\overline\gamma}&
   \coloneqq{\mathfrak h}^{\overline\gamma}(C_P)\ge{\mathfrak h}^{\overline\gamma}(C_P^M), 
\end{align*}
we observe that Assumption~\ref{A:suffalpha}, taking~$r=\mathfrak r\in(1,\frac{1}{\|\zeta_2+\eta_2\|})$ as
in Assumption~\eqref{A:FFM},
follows from
\begin{align}\label{suffalpha-bddM}
 \alpha_{\M_{\sigma+}}>\inf_{\begin{subarray}{c}
                       (\overline\gamma_1,\overline\gamma_2,\overline\gamma_3)\in\R_0^3,\\
                       \mathfrak a_0^{\overline\gamma}>0,\hspace*{.7em}\overline\e>0,\\
                       \mathfrak a_0^{\overline\gamma}\alpha_{\M_{\sigma+}}
                       -\widetilde{\mathfrak a}_1^{\overline\gamma}-\widetilde{\mathfrak a}_2^{\overline\gamma}\mathfrak q
                       -\overline\e>0,
                                 \end{subarray}}
\frac{1}{\mathfrak a_0^{\overline\gamma}}\left(\widetilde{\mathfrak a}_1^{\overline\gamma}
+\widetilde{\mathfrak a}_2^{\overline\gamma}\mathfrak q+\overline\e
+(\mathfrak p+1)\widetilde{\mathfrak a}_2^{\overline\gamma}\mathfrak q\left(\norm{Q_0}{V}^2
  +(\fractx{\mathfrak r}{\mathfrak r-1}\overline\e)^{-\frac{\mathfrak r-1}{\mathfrak r}}
  \norm{\widetilde{\mathfrak h}^{\overline\gamma}}{L^{\mathfrak r}(\R_{0},\R)}
  \right)^{\mathfrak p}\right).
  \end{align}
Note that for~$M$ large enough it follows
that~$\mathfrak a_0^{\overline\gamma}\alpha_{\M_{\sigma+}}-\widetilde{\mathfrak a}_1^{\overline\gamma}
-\widetilde{\mathfrak a}_2^{\overline\gamma}\mathfrak q-\overline\e>0$.
 Now,
 with~$y_0=q_0+Q_0$, 
 \begin{subequations}\label{suffalpha-bddM3}
 \begin{align}
   &\lim_{M\to+\infty}\fractx{\widetilde{\mathfrak a}_1^{\overline\gamma}+\overline\e}{\alpha_{\M_{\sigma+}}}
                       =0,\\
   &\lim_{M\to+\infty}\fractx{ \mathfrak a_2^{\overline\gamma}\mathfrak q }{\alpha_{\M_{\sigma+}}}
      \le \ovlineC{C_P,\norm{q_0}{V}}\lim_{M\to+\infty}
      \fractx{ \alpha_{\M_{\sigma}}^{\|\frac{\zeta_2}{1-\delta_2}\|} }{\alpha_{\M_{\sigma+}}}
      \le \ovlineC{C_P,\norm{q_0}{V}}\lim_{M\to+\infty}
      \Lambda^{\|\frac{\zeta_2}{1-\delta_2}\|}\alpha_{\M_{\sigma+}}^{\|\frac{\zeta_2}{1-\delta_2}\|-1}=0,\quad                    
   \intertext{since~$\|\frac{\zeta_2}{1-\delta_2}\|<1$, and}
  &\lim_{M\to+\infty}\fractx{(\mathfrak p+1)\mathfrak a_2^{\overline\gamma}\mathfrak q\left(\norm{Q_0}{V}^2
  +(\fractx{\mathfrak r}{\mathfrak r-1}\overline\e)^{-\frac{\mathfrak r-1}{\mathfrak r}}\norm{{\mathfrak h}^{\overline\gamma}}{L^{\mathfrak r}(\R_{s_0},\R)}
  \right)^{\mathfrak p}}{\alpha_{\M_{\sigma+}}}
  \le\ovlineC{C_P,\norm{q_0}{V}}\lim_{M\to+\infty}
  \fractx{\mathfrak q\norm{\mathfrak h^{\overline\gamma}}{{L^{\mathfrak r}(\R_{0},\R)}}^{\mathfrak p}}
  {\alpha_{\M_{\sigma+}}}.
\end{align} 

From~\eqref{normhLr} we have that
\begin{align*}
    \norm{\mathfrak h^{\overline\gamma}}{{L^{\mathfrak r}(\R_{0},\R)}}
      &\le\ovlineC{n,C_P,C_{\rm rc},C_\NN,C_{q1},C_{q2},C_{q3},\frac1{\lambda},\norm{q(0)}{V}}
           \left(1+\alpha_{\M_\sigma}^{\beta_2}+\alpha_{\M_\sigma}^{\mathfrak r\eta_2\|\zeta_{2}+\delta_{2}\|}
      \right)\norm{q(0)}{V}^{2},
     \end{align*}
which leads us to
  \begin{align}   
 \lim_{M\to+\infty}
  \fractx{\mathfrak q\norm{\mathfrak h^{\overline\gamma}}{{L^2(\R_{0},\R)}}^{\mathfrak p}}
  {\alpha_{\M_{\sigma+}}}
  &\le\overline C\lim_{M\to+\infty}
  \left(\alpha_{\M_{\sigma+}}^{\|\frac{\zeta_2}{1-\delta_2}\|-1}
  \left(1+\alpha_{\M_\sigma}^{\beta_2\mathfrak p}
      +\alpha_{\M_\sigma}^{\mathfrak r\eta_2\|\zeta_{2}+\delta_{2}\|\mathfrak p}\right)\right)=0,
\end{align}     
  since, by Assumption~\ref{A:FFM}, we have~$\max\left\{\|\frac{\zeta_2}{1-\delta_2}\|-1+\beta_2\mathfrak p,
  \|\frac{\zeta_2}{1-\delta_2}\|-1+\mathfrak r\eta_2\|\zeta_{2}+\delta_{2}\|\mathfrak p\right\}<0$.
\end{subequations} 
 Therefore, from the inequalities in~\eqref{suffalpha-bddM3} we can conclude that necessarily~\eqref{suffalpha-bddM} holds
 true for large enough~$M$, with
  \begin{equation}\label{depM_Vnorm}
 M=\ovlineC{n,C_P,C_{\rm rc},C_\NN,C_{q1},C_{q2},C_{q3},\frac1{\lambda},\norm{q(0)}{V},\norm{Q(0)}{V}}. 
 \end{equation}
 In particular, \eqref{depM_Vnorm}
 means that~$M$ increases (or may increase) with the norm~$\norm{y(0)}{V}$, of the initial condition~$y(0)=q(0)+Q(0)$,
 but it also means that, for arbitrary given~$R>0$, ~$M$ can
 be taken the same for all initial initial conditions in the ball
 $\{z\in V\mid \norm{z}{V}\le R\}$.
 \qed

 %%%%%%%%%%%%%%%%%%%%%%%%%%%%%%%%%%%%%%%
\subsection{Boundedness of the control}
In applications, besides the existence of a stabilizing feedback, it is important that the total
``energy'' spent to stabilize the system is finite.
We show here that the control given by our nonlinear feedback operator in~\eqref{FeedKy-Nx} is indeed bounded, with a bound
increasing with the norm of the initial condition.
Note that~\eqref{sys_F} and~\eqref{sys_F-dec} are the same system.

\begin{theorem}\label{T:main-ct}
 Let~$\mathbf u(t)\coloneqq\mathfrak F(t,y(t))=\KKK_{U_{\#\M_\sigma}}^{\FF_{\M_\sigma},\NN}(t,y)$
 be the control input given by the
 operator~\eqref{FeedKy-Nx}
 stabilizing system~\eqref{sys_F}, with initial condition~$y_0$ as in Theorem~\ref{T:main.1pair}. Then 
 \begin{align*}
&\sup_{t\ge0}\norm{\KKK_{U_{\#\M_\sigma}}^{\FF_{\M_\sigma},\NN}(t,z)}{H}
\le\ovlineC{\dnorm{P}{\LL},\norm{z}{V},C_{\rm rc},C_{\NN},C_{q_0}}\left(1+\norm{z}{\D(A)}^\xi\right),
\quad\mbox{for all}\quad z\in\D(A),\qquad\mbox{and}\\%\label{feednorm-K}\\
&\norm{\mathbf u}{L^{2\mathfrak r}(\R_{0},H)}
\le \ovlineC{n,\dnorm{P}{\LL},C_{\rm rc},C_{\NN},\frac1{\widetilde\e},C_{q0},C_{q1},C_{q2},C_{q3},\norm{q_0}{V},\alpha_{\M_\sigma}}
\norm{q_0}{V}
\end{align*}
with~$1<\mathfrak r< \tfrac{1}{\|\zeta_2+\delta_2\|}$ and~$\xi$ as in~Assumption~\ref{A:FFM},
and~$\dnorm{P}{\LL}\coloneqq\norm{P_{U_{\#\M_\sigma}}^{E_{\M_\sigma}^\perp}}{\LL(H)}$.
\end{theorem}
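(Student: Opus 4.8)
The plan is to prove the two displayed estimates separately; both rest on the splitting $y=q+Q$ and on the decay and integrability already established in Theorems~\ref{T:wsol_QqV} and~\ref{T:main.1pairF}, combined with Assumption~\ref{A:FFM}. For the pointwise bound I would start from the definition~\eqref{FeedKy-Nx}, apply the triangle inequality and the operator norm of the oblique projection, obtaining
\[
\norm{\KKK_{U_{\#\M_\sigma}}^{\FF_{\M_\sigma},\NN}(t,z)}{H}\le\dnorm{P}{\LL}\Bigl(\norm{Az}{H}+\norm{A_{\rm rc}(t)z}{H}+\norm{\NN(t,z)}{H}+\norm{\FF_{\M_\sigma}(P_{E_{\M_\sigma}}z)}{H}\Bigr),
\]
and then bound each summand: $\norm{Az}{H}=\norm{z}{\D(A)}$; $\norm{A_{\rm rc}(t)z}{H}\le C_{\rm rc}\norm{z}{V}$ by Assumption~\ref{A:A1}; $\norm{\NN(t,z)}{H}\le C_\NN\sum_j\norm{z}{V}^{\zeta_{1j}+\delta_{1j}}\norm{z}{\D(A)}^{\zeta_{2j}+\delta_{2j}}$ from Assumption~\ref{A:NN} with $y_2=0$ (using $\NN(t,0)=0$); and $\norm{\FF_{\M_\sigma}(P_{E_{\M_\sigma}}z)}{H}\le C_{q0}\norm{z}{\D(A)}^\xi$, since $P_{E_{\M_\sigma}}$ commutes with $A$ and does not increase the $\D(A)$-norm. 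As $\zeta_{2j}+\delta_{2j}<1\le\xi$ and $\norm{z}{V}\le C\norm{z}{\D(A)}$, every power of $\norm{z}{\D(A)}$ occurring is dominated by $1+\norm{z}{\D(A)}^\xi$; absorbing the $V$-norm into the constant yields the first inequality.

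For the $L^{2\mathfrak r}$-in-time bound the key observation is that $\mathbf u(t)$ takes values in the finite-dimensional space $U_{\#\M_\sigma}$, so $\mathbf u=P_{U_{\#\M_\sigma}}^{E_{\M_\sigma}^\perp}\mathbf u=P_{U_{\#\M_\sigma}}^{E_{\M_\sigma}^\perp}P_{E_{\M_\sigma}}\mathbf u$ by~\eqref{propProj}, whence $\norm{\mathbf u(t)}{H}\le\dnorm{P}{\LL}\norm{P_{E_{\M_\sigma}}\mathbf u(t)}{H}$. Projecting the definition of $\mathbf u$ onto $E_{\M_\sigma}$, using $P_{E_{\M_\sigma}}P_{U_{\#\M_\sigma}}^{E_{\M_\sigma}^\perp}=P_{E_{\M_\sigma}}$, the commutation $P_{E_{\M_\sigma}}A=AP_{E_{\M_\sigma}}$, and $\FF_{\M_\sigma}(q)\in E_{\M_\sigma}$, I obtain
\[
P_{E_{\M_\sigma}}\mathbf u=\bigl(Aq-\FF_{\M_\sigma}(q)\bigr)+P_{E_{\M_\sigma}}\bigl(A_{\rm rc}(t)y+\NN(t,y)\bigr).
\]
The decisive gain is that the leading term $AQ$ has disappeared: only $Aq$, living on the finite-dimensional part, survives application of the diffusion operator, so no $L^{2\mathfrak r}$-in-time control of $\norm{Q}{\D(A)}$ is needed.

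It then remains to estimate the three contributions in $L^{2\mathfrak r}(\R_0,H)$. The term $\norm{Aq-\FF_{\M_\sigma}(q)}{L^{2\mathfrak r}(\R_0,H)}$ is controlled directly by Assumption~\ref{A:FFM} by $C_{q3}\norm{q_0}{V}^{\beta_1}\norm{q_0}{\D(A)}^{\beta_2}$, which, using $\norm{q_0}{\D(A)}\le\alpha_{\M_\sigma}^{1/2}\norm{q_0}{V}$ and $\beta_1+\beta_2\ge1$, is at most $\overline C\norm{q_0}{V}$ after absorbing the excess power of $\norm{q_0}{V}$ and the factor $\alpha_{\M_\sigma}^{\beta_2/2}$ into the constant. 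For $A_{\rm rc}(t)y$ I use $\norm{A_{\rm rc}(t)y}{H}\le C_{\rm rc}\norm{y}{V}$ together with the exponential decay $\norm{y(t)}{V}\le\overline C\ex^{-\frac{\mu}{2}t}\norm{y(0)}{V}$ from Theorem~\ref{T:main.1pairF}, which makes the time integral converge. The nonlinear term is the delicate one: by Assumption~\ref{A:NN} with $y_2=0$, $\norm{\NN(t,y)}{H}^{2\mathfrak r}$ is a sum of terms $\norm{y}{V}^{2\mathfrak r(\zeta_{1j}+\delta_{1j})}\norm{y}{\D(A)}^{2\mathfrak r(\zeta_{2j}+\delta_{2j})}$, and since $\mathfrak r<\tfrac1{\dnorm{\zeta_2+\delta_2}{}}$ forces $\mathfrak r(\zeta_{2j}+\delta_{2j})<1$, a Hölder split in time (exactly as in the derivation of~\eqref{normhLr}) against $\norm{y}{\D(A)}^2\in L^1(\R_0)$ trades the $\D(A)$-power for $y\in L^2(\R_0,\D(A))$ while the surviving $V$-power is integrated using the exponential decay. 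The parabolic regularity $y\in L^2(\R_0,\D(A))$ used here follows by integrating~\eqref{dynnormQDA} over $\R_0$ for the $Q$-part and from $\norm{q}{\D(A)}\le\alpha_{\M_\sigma}^{1/2}\norm{q}{V}$ for the $q$-part.

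The main obstacle is precisely this last step: one must check that all exponents produced by the Hölder splitting remain admissible, so that the integrals converge and the bound stays linear in the relevant initial-data norm with a constant allowed to grow with $\norm{q_0}{V}$ and $\alpha_{\M_\sigma}$. This is where the structural inequalities of Assumption~\ref{A:FFM}, in particular $\mathfrak r\dnorm{\zeta_2+\delta_2}{}<1$ and $\beta_1+\beta_2\ge1$, together with the integrability estimates for $q$ therein, enter in an essential way; the remaining manipulations are routine uses of the triangle inequality and the projection bounds.
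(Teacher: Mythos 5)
Your proposal is correct and follows essentially the same route as the paper: the pointwise bound by factoring out $\dnorm{P}{\LL}$ and estimating each term via Assumptions~\ref{A:A1}, \ref{A:NN} (with $y_2=0$) and~\ref{A:FFM}, and the $L^{2\mathfrak r}$ bound by using $P_{U_{\#\M_\sigma}}^{E_{\M_\sigma}^\perp}=P_{U_{\#\M_\sigma}}^{E_{\M_\sigma}^\perp}P_{E_{\M_\sigma}}$ so that $Ay$ collapses to $Aq$, followed by Assumption~\ref{A:FFM} for $Aq-\FF_{\M_\sigma}(q)$, the exponential $V$-decay for the reaction--convection term, and the Hölder split in time (as in the derivation of~\eqref{normhLr}) for the nonlinearity. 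Your added remarks on $y\in L^2(\R_0,\D(A))$ and on the admissibility of the Hölder exponents make explicit what the paper leaves implicit, but the argument is the same.
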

\begin{proof}
Recalling~\eqref{FeedKy-Nx}, the boundedness of~$\KKK_{U_{\#\M_\sigma}}^{\FF_{\M_\sigma},\NN}$ follows simply from
 \begin{align*}
& \norm{A}{\LL(\D(A),H)}=1,\quad
\norm{A_{\rm rc}(t)}{\LL(\D(A),H)}\le\norm{\Id}{\LL(\D(A),V)}\norm{A_{\rm rc}(t)}{\LL(V,H)}\le
\ovlineC{C_{\rm rc}}\norm{\Id}{\LL(\D(A),V)},\\
& \norm{\NN(t,z)}{H}=\ovlineC{C_\NN}\left(1+\norm{z}{V}^{\|\zeta_1+\delta_1\|}\right)
 \left(1+\norm{z}{\D(A)}^{\|\zeta_2+\delta_2\|}\right),\qquad
\norm{\FF_{\M_\sigma}(P_{E_{\M_\sigma}}z)}{H}\le C_{q_0}\norm{z}{\D(A)}^\xi,
 \end{align*}
and from $\norm{\Id}{\LL(V,H)}=\alpha_1^{-\frac12}$, $\xi\ge1$, and~$\|\zeta_2+\delta_2\|<1$.
 
To show the boundedness of the spent ``energy''~$\norm{\mathbf u}{L^{2\mathfrak r}(\R_{0},H)}$, we start by observing that
\begin{align*}
\norm{\mathbf u}{L^{2\mathfrak r}(\R_{0},H)}
&\le \dnorm{P}{\LL}\norm{P_{E_{\M_\sigma}}\left(Ay+A_{\rm rc}(\Bigcdot)y
+\NN(\Bigcdot,y)-\FF_{\M_\sigma}(P_{E_{\M_\sigma}}y)\right)}{L^{2\mathfrak r}(\R_{0},H)}
\\
&\hspace*{-0em}\le \ovlineC{\dnorm{P}{\LL},C_{q3},C_{\rm rc},C_{\NN}}
\left(\norm{q_0}{V}^{\beta_1}\norm{q_0}{\D(A)}^{\beta_2}
+\norm{y}{L^{2\mathfrak r}(\R_{0},V)}
+\textstyle\sum\limits_{j=1}^n\norm{\norm{y}{V}^{\zeta_{1j}
+\delta_{1j}}\norm{y}{\D(A)}^{\zeta_{2j}+\delta_{2j}}}{L^{2\mathfrak r}(\R_{0},\R)}\right)\\
&\hspace*{-0em}\le \ovlineC{n,\dnorm{P}{\LL},C_{\rm rc},C_{\NN},\frac1{\widetilde\e},C_{q1},C_{q3},\norm{q_0}{V},\alpha_{\M_\sigma}}
\left(\norm{q_0}{V}^{\beta_1+\beta_2}
+\norm{y_0}{V}
+\norm{y_0}{V}^{\|\zeta_{1}+\delta_{1}+\zeta_{2}+\delta_{2}\|}\right)
\end{align*}
with~$q_0\coloneqq P_{E_{\M_\sigma}}y_0$, and where we have used~\eqref{stable-s_0}.
Observe that we have that~$\norm{y_0}{V}^{\zeta_{1j}+\delta_{1j}+\zeta_{2j}+\delta_{2j}}\le\norm{y_0}{V}
+\norm{y_0}{V}^{\|\zeta_{1}+\delta_{1}+\zeta_{2}+\delta_{2}\|}$,
because~$\zeta_{1j}+\delta_{1j}+\zeta_{2j}+\delta_{2j}\ge\delta_{1j}+\delta_{2j}\ge1$. Recall also that~$\beta_1+\beta_2\ge1$.
\qed\end{proof}

%%%%%%%%%%%%%%%%%%%%%%%%%%%%%%%%%%%%%%%
\subsection{Remark on the transient bound}
We have seen that, see~\eqref{depM_Vnorm} and~\eqref{stable-s_0}, in Theorem~\ref{T:N.goal} we may
take
\begin{align*}
 M&=\ovlineC{n,C_P,C_{\rm rc},C_\NN,C_{q1},C_{q2},C_{q3},\frac1{\lambda},\norm{y(0)}{V}},\\
 \mu_2&=\min\{\tfrac{\widetilde\e}{2},\lambda\},\quad\mbox{and}\quad C_5=\ovlineC{n,C_P,C_{\rm rc},C_\NN,C_{q0},C_{q1},C_{q3},
 \tfrac1{\widetilde\e},\norm{y(0)}{V},\alpha_{\M_\sigma}}.
 \end{align*}
Observe that 
by taking a larger~$M$ we still have a stable closed-loop system, but since the transient
bound~$C_5$ depends on~$\alpha_{\M_\sigma}$, the transient
 time~$t_{\rm tr}=\frac{\log C_5}{\mu_2}$ may also depend on~$\alpha_{\M_\sigma}$. Note also that, from~\eqref{eps.tildeeps}, ~$\mu_2$ will
depend on~$\alpha_{\M_\sigma}$ if $\norm{\mathfrak h}{L^r(\R_0,H)}$ does. We see that $C_5$ gives us an upper bound
for the norm of the closed-loop solution, $\max\{\norm{y(t)}{V}\mid t\ge0\}\le C_5\norm{y(0)}{V}$, and for time~$t\ge t_{\rm tr}$ 
we necessarily have that~$\norm{y(t)}{V}\le\norm{y(0)}{V}$. Therefore, it could be
interesting to understand whether we can make $C_5$ and $t_{\rm tr}$ as small as possible. Though we
do not study this possibility in here, we would like to say that a positive answer does not follow from above,  
due to the dependence on~$\alpha_{\M_\sigma}$. Finding a positive answer to this question
will likely require the derivation of new appropriate estimates.

%%%%%%%%%%%%%%%%%%%%%%%%%%%%%%%%%%%%%%%
%%%%%%%%%%%%%%%%%%%%%%%%%%%%%%%%%%%%%%%
\section{Parabolic equations with polynomial nonlinearities}\label{S:examples}

We consider parabolic equations, evolving in a bounded domain~$\Omega\subset\R^{\mathbf d}$, $\mathbf d\in\{1,2,3\}$ under
homogeneous Dirichlet or Neumann boundary conditions and with a general polynomial nonlinearity. We assume that~$\Omega$ is regular enough so that
$V\subset H^1(\Omega)$, $\D(A)\subset H^2(\Omega)$, with equivalent norms, say $C_1\norm{y}{V}\le \norm{y}{H^1(\Omega)}\le C_2\norm{y}{V}$
and $C_3\norm{y}{\D(A)}\le \norm{y}{H^2(\Omega)}\le C_4\norm{y}{\D(A)}$ for suitable positive constants~$C_1, C_2,C_3, C_4$.

We check Assumptions~\ref{A:A0sp}--\ref{A:NN} and Assumptions~\ref{A:FFM}--\ref{A:suffalpha}
for the system
\[
 \fractx{\p}{\p t}y+(-\nu\Delta +\Id)y +(a(t,x)-1)y+b(t,x)\cdot\nabla y +\NN(t,x,y)=0,\qquad \GG y\rest{\p\Omega}=0,\qquad y(0)=y_0,
\]
with either Dirichlet, $\GG=\Id$, or Neumann, $\GG=\frac{\p}{\p\nnn}$, homogeneous boundary conditions.
Where~$\nnn$ stands for the unit
outward normal vector to the boundary~${\p\Omega}$ of~$\Omega$.

\begin{remark}
 Below we use the Agmon embedding~$\D(A)\subseteq H^2(\Omega)\xhookrightarrow{}L^\infty(\Omega)$ which holds true
 for~$\mathbf d\in\{1,2,3\}$. For the case $\mathbf d\ge4$, which we do not consider here in this section, we would
 need a different argument because
 we do not necessarily have~$H^2(\Omega)\xhookrightarrow{}L^\infty(\Omega)$, see~\cite[Lem.~13.2]{Agmon65}.
\end{remark}

%%%%%%%%%%%%%%%%%%%%%%%%%%%%%%%%%%%%%%%
\subsection{The linear operators}
We check Assumptions~\ref{A:A0sp}--\ref{A:A1}.
We take~$A=-\nu\Delta +\Id\colon\D(A)\to H$, with $H=L^2(\Omega)$
and~$\D(A)=\{u\in H^2(\Omega)\mid \GG u\rest{\p\Omega}=0\}$. Under Dirichlet boundary conditions we have~$V=H^1_0(\Omega)$, 
and under Neumann boundary conditions~$V=H^1(\Omega)$. It is straightforward to see
that Assumptions~\ref{A:A0sp}--\ref{A:A0cdc} are satisfied. Assumption~\ref{A:A1} will be
satisfied if~$a\in L^\infty(\R_0\times\Omega,\R)$
and~$b\in L^\infty(\R_0\times\Omega,\R^3)$.

%%%%%%%%%%%%%%%%%%%%%%%%%%%%%%%%%%%%%%%
\subsection{Polynomial reactions and convections in case~$\Omega\subset\R^{3}$}
In case~$\mathbf d=3$ we show now that Assumption~\ref{A:NN} is satisfied for nonlinearities in the form
\begin{subequations}\label{ex.NNr}
 \begin{align}
&\NN(t,y)=\NN(t,x,y)=\textstyle\sum\limits_{j=1}^{n}\left(\hat a_j(t,x)\norm{y}{\R}^{r_j-1}y
+\left(\hat b_j(t,x)\cdot\nabla y\right)\norm{y}{\R}^{s_j-1}y\right),\quad y=y(t,x),\\
&\mbox{with}\quad r_j\in(1,5),\quad s_j\in[1,2),\quad\hat a_j\in L^\infty(\R_0\times\Omega,\R),
\quad \hat b_j\in L^\infty(\R_0\times\Omega,\R^3),\quad (t,x)\in\R_0\times\Omega.
\end{align}
\end{subequations}

%%%%%%%%%%%%%%%%%%%%%%%%%%%%%%%%%%%%%%%
\subsubsection{The reaction components}
We start by considereing the terms~$\hat a_j(t,x)\norm{y}{\R}^{r_j-1}y$.
Let us fix~$(t,j,r_j)$. Observe that~$\phi_j(x,s)\coloneqq\hat a_j(t,x)\norm{s}{\R}^{r_j-1}s$, $(t,s)\in\R^2$, is differentiable with respect to~$s$,
because~$\phi_j(x,\Bigcdot)\in C(\R,\R)$ and
$\frac{\p}{\p s}\phi_j(x,\Bigcdot)\in C(\R,\R)$, with $\frac{\p}{\p s}\phi_j(x,\tau)=r_j\hat a_j(t,x)\norm{\tau}{\R}^{r_j-1}$.
We also have the growth bounds
\[
 \phi_j(x,s)\le\norm{\hat a_j(t,\Bigcdot)}{L^\infty(\Omega)}\norm{s}{\R}^{r_j}\quad\mbox{and}\quad
 \textstyle\frac{\p}{\p s}\phi_j(x,\tau)\le r_j\norm{\hat a_j(t,\Bigcdot)}{L^\infty(\Omega)}\norm{\tau}{\R}^{r_j-1}.
\]
Thus, the Nemytskij operator
~$y\mapsto\NN_j(t,y)\coloneqq\hat a_j(t,x)\norm{y}{\R}^{r_j-1}y$ and its Fr\'echet derivative $\ed\NN_j\rest{y}$ satisfy:
\[
 \NN_j(t,\Bigcdot)\in C(L^{2r_j},L^2)\quad\mbox{and}\quad
 \ed\NN_j\rest{y}=r_j\hat a_j(t,x)\norm{y}{\R}^{r_j-1}\in C(L^{2r_j},\LL(L^{2r_j},L^2)).
\]
Indeed, with~$q>1$, we have
\begin{align*}
\norm{r_j\hat a_j\norm{y}{\R}^{r_j-1}h}{L^2}\le r_j\norm{\hat a_j(t,\Bigcdot)}{L^\infty}\norm{\norm{y}{\R}^{r_j-1}h}{L^2}
\le r_j\norm{\hat a_j(t,\Bigcdot)}{L^\infty}\norm{y}{L^{2q(r_j-1)}}^{r_j-1}\norm{h}{L^{2\frac{q}{q-1}}}.
\end{align*}
Setting $q=\frac{r_j}{r_j-1}$ we obtain
\begin{align*}
\norm{r_j\hat a_j\norm{y}{\R}^{r_j-1}h}{L^2}\le
r_j\norm{\hat a_j(t,\Bigcdot)}{L^\infty}\norm{y}{L^{2r_j}}^{r_j-1}\norm{h}{L^{2r_j}}.
\end{align*}

By the Mean Value Theorem
(see, e.g., \cite[Thm.~1.8]{AmbrosettiProdi93}) we can conclude that
\begin{align}
 \norm{\NN_j(t,y_1)-\NN_j(t,y_2)}{L^2}
 &\le
 r_j\norm{\hat a_j(t,\Bigcdot)}{L^\infty}\left(\norm{y_1}{L^{2r_j}}^{r_j-1}+\norm{y_2}{L^{2r_j}}^{r_j-1}\right)
 \norm{y_1-y_2}{L^{2r_j}}.\label{DifPhi1}
\end{align}
Shortening the notation as~$\DD_{\NN_j}\coloneqq\NN_j(t,y_1)-\NN_j(t,y_2)$ and~$\DD_y\coloneqq y_1-y_2$,
 we obtain
\begin{align*}
 \norm{\DD_{\NN_j}}{L^2}
 &\le C_4 r_j\norm{\hat a_j(t,\Bigcdot)}{L^\infty}\left(\norm{y_1}{V}^{r_j-1}+\norm{y_2}{V}^{r_j-1}\right)\norm{\DD_y}{V},
 &&\mbox{if}\quad r_j\le3.\\
\norm{\DD_{\NN_j}}{L^2}
  &\le C_5 r_j\norm{\hat a_j(t,\Bigcdot)}{L^\infty}
 \left(\textstyle\sum\limits_{k=1}^2\norm{y_k}{L^{\infty}}^{\frac{(2r_j-6)(r_j-1)}{2r_j}}
 \norm{y_k}{L^{6}}^{\frac{3(r_j-1)}{r_j}}
 \right)
 \norm{\DD_y}{L^{\infty}}^{\frac{2r_j-6}{2r_j}}\norm{\DD_y}{L^{6}}^{\frac{3}{r_j}}\\
 &\le C_6 r_j\norm{\hat a_j(t,\Bigcdot)}{L^\infty}
 \left(\textstyle\sum\limits_{k=1}^2\norm{y_k}{\D(A)}^{\frac{(2r_j-6)(r_j-1)}{4r_j}}
 \norm{y_k}{V}^{\frac{(2r_j+6)(r_j-1)}{4r_j}}
 \right)
 \norm{\DD_y}{\D(A)}^{\frac{2r_j-6}{4r_j}}\norm{\DD_y}{V}^{\frac{2r_j+6}{4r_j}},
 &&\mbox{if}\quad r_j>3.%\\
\end{align*}
Where we have used the Sobolev
embedding inequality~$\norm{z}{L^6}\le C\norm{z}{V}$, see~\cite[Thm.~4.57]{DemengelDem12}, and the Agmon
inequality~$\norm{z}{L^\infty}\le C\norm{z}{V}^{\frac{1}{2}}\norm{z}{\D(A)}^{\frac{1}{2}}$,
see~\cite[Lem.~13.2]{Agmon65},\cite[Sect.~1.4]{Temam97}.

Therefore, we can see
that~$\NN_j$ satisfies the inequality in Assumption~\ref{A:NN} when~$1<r_j<5$, with
\begin{align*}
&\zeta_{1j}=r_j-1, 
\quad&&\zeta_{2j}=0,
\quad &&\delta_{1j}=1,\quad&&\delta_{2j}=0,&&\quad\mbox{if}\quad r_j\in(1,3].\\
&\zeta_{1j}=\fractx{(2r_j+6)(r_j-1)}{4r_j},
\quad&&\zeta_{2j}=\fractx{(2r_j-6)(r_j-1)}{4r_j},
\quad &&\delta_{1j}=\fractx{2r_j+6}{4r_j},\quad&&\delta_{2j}=\fractx{2r_j-6}{4r_j},&&\quad\mbox{if}\quad r_j\in(3,5).
\end{align*}
In either case
$\zeta_{2j}+\delta_{2j}<1$ and~$\delta_{1j}+\delta_{2j}=1$. For~$r_j\in(3,5)$ we
have~$\zeta_{2j}+\delta_{2j}=\fractx{2r_j-6}{4}=\fractx{r_j-3}{2}<1$.

%%%%%%%%%%%%%%%%%%%%%%%%%%%%%%%%%%%%%%%
\subsubsection{The convection components} 
We consider now the terms~$\left(\hat b_j(t,x)\cdot\nabla y\right)\norm{y}{\R}^{s_j-1}y$.
Observe that
\begin{align}
 &\quad\norm{\left(\hat b_j(t,\Bigcdot)\cdot\nabla y_1\right)\norm{y_1}{\R}^{s_j-1}y_1
 -\left(\hat b_j(t,\Bigcdot)\cdot\nabla y_2\right)\norm{y_2}{\R}^{s_j-1}y_2}{L^2}\notag\\
 &\le\norm{\left(\hat b_j(t,\Bigcdot)\cdot\nabla y_1\right)\left(\norm{y_1}{\R}^{s_j-1}y_1-\norm{y_2}{\R}^{s_j-1}y_2\right)}{L^2}
 +\norm{\left(\hat b_j(t,\Bigcdot)\cdot\nabla (y_1-y_2)\right)\norm{y_2}{\R}^{s_j-1}y_2}{L^2}\label{jConv0}
  \end{align}
  and
 \begin{align*} 
  &\norm{\left(\hat b_j(t,\Bigcdot)\cdot\nabla y_1\right)\left(\norm{y_1}{\R}^{s_j-1}y_1-\norm{y_2}{\R}^{s_j-1}y_2\right)}{L^2}\\
  &\hspace*{8em}\le Cs_j\norm{\hat b_j(t,\Bigcdot)}{L^\infty}\norm{\nabla y_1}{L^\frac{6}{4-s_j}}
  \norm{\norm{y_1}{\R}^{s_j-1}+\norm{y_2}{\R}^{s_j-1}}{L^\frac{6}{s_j-1}}\norm{y_1-y_2}{L^\infty},\\
  &\norm{\left(\hat b_j(t,\Bigcdot)\cdot\nabla (y_1-y_2)\right)\norm{y_2}{\R}^{s_j-1}y_2}{L^2}
  \le C\norm{\hat b_j(t,\Bigcdot)}{L^\infty}\norm{\nabla (y_1-y_2)}{L^\frac{6}{4-s_j}}
  \norm{\norm{y_2}{\R}^{s_j-1}}{L^\frac{6}{s_j-1}}\norm{y_2}{L^\infty}.
 \end{align*}
Recall that, since~$\mathbf d=3$, we have the  Sobolev embedding~$H^{\frac{s_j-1}{2}}\xhookrightarrow{}L^{\frac{6}{4-s_j}}$,
because~$2\frac{s_j-1}{2}<\mathbf d$, and~$\fractx{6}{4-s_j}=\fractx{2\mathbf d}{\mathbf d-2\frac{s_j-1}{2}}$,
see~\cite[Thm.~4.57]{DemengelDem12}.
Thus, ~$\norm{\nabla z}{H^{\frac{s_j-1}{2}}}\le C_1\norm{z}{H^{1+\frac{s_j-1}{2}}}
\le C_2\norm{z}{V}^{1-\frac{s_j-1}{2}}\norm{z}{\D(A)}^{\frac{s_j-1}{2}}$, where the latter inequality follows by an
interpolation inequality, see~\cite[Ch.~1, Sect.~2.5, Prop.~2.3]{LioMag72-I}. Therefore, we find
\begin{subequations}\label{jConv}
\begin{align}
 &\norm{\left(\hat b_j(t,\Bigcdot)\cdot\nabla y_1\right)\left(\norm{y_1}{\R}^{s_j-1}y_1-\norm{y_2}{\R}^{s_j-1}y_2\right)}{L^2}
 \notag\\
  &\hspace*{3em}\le s_jC_3\norm{\hat b_j(t,\Bigcdot)}{L^\infty}\norm{y_1}{V}^{\frac{3-s_j}{2}}\norm{y_1}{\D(A)}^{\frac{s_j-1}{2}}
  \left(\norm{y_1}{V}^{s_j-1}+\norm{y_2}{V}^{s_j-1}\right)\norm{y_1-y_2}{V}^\frac{1}{2}\norm{y_1-y_2}{\D(A)}^\frac{1}{2},
  \label{jConv1}\\
&\norm{\left(\hat b_j(t,\Bigcdot)\cdot\nabla (y_1-y_2)\right)\norm{y_2}{\R}^{s_j-1}y_2}{L^2} \notag\\
  &\hspace*{3em}\le C_3\norm{\hat b_j(t,\Bigcdot)}{L^\infty}
  \norm{y_2}{V}^{s_j-\frac12}\norm{y_2}{\D(A)}^\frac{1}{2}
  \norm{y_1-y_2}{V}^{\frac{3-s_j}{2}}\norm{y_1-y_2}{\D(A)}^{\frac{s_j-1}{2}}.
  \label{jConv2}
     \end{align}
  \end{subequations}
 Next, observe that in case~$s_j>1$, and setting~$\kappa\in(1,\frac{1}{s_j-1})$,
\begin{align*}
 &\norm{y_1}{V}^{\frac{3-s_j}{2}}\norm{y_1}{\D(A)}^{\frac{s_j-1}{2}}
  \left(\norm{y_1}{V}^{s_j-1}+\norm{y_2}{V}^{s_j-1}\right)
  = \norm{y_1}{V}^{\frac{s_j+1}{2}}\norm{y_1}{\D(A)}^{\frac{s_j-1}{2}}
  +\norm{y_1}{V}^{\frac{3-s_j}{2}}\norm{y_1}{\D(A)}^{\frac{s_j-1}{2}}\norm{y_2}{V}^{s_j-1}\\
  &\hspace*{1em}\le \norm{y_1}{V}^{\frac{s_j+1}{2}}\norm{y_1}{\D(A)}^{\frac{s_j-1}{2}}
  +\norm{y_1}{V}^{\kappa\frac{3-s_j}{2}}\norm{y_1}{\D(A)}^{\kappa\frac{s_j-1}{2}}+\norm{y_2}{V}^{(s_j-1)\frac{\kappa}{\kappa-1}}
  \le \textstyle\sum\limits_{i=1}^3
  \left(\norm{y_1}{V}^{\zeta_{1i}}\norm{y_1}{\D(A)}^{\zeta_{2i}}+\norm{y_2}{V}^{\zeta_{1i}}\norm{y_2}{\D(A)}^{\zeta_{2i}}
  \right)
\end{align*}
with~$(\zeta_{1i},\zeta_{2i})\in\{(\frac{s_j+1}{2},\frac{s_j-1}{2}),
(\kappa\frac{3-s_j}{2},\kappa\frac{s_j-1}{2}),(\frac{(s_j-1)\kappa}{\kappa-1},0)\}$. Hence, in case~$s_j\in(1,2)$,
the component in~\eqref{jConv1} is bounded by a sum as in Assumption~\ref{A:NN} with~$\delta_{1i}=\delta_{2i}=\frac12$ and
$\zeta_{2i}+\delta_{2i}\le\frac{1+\kappa(s_j-1)}{2}<1$.

We can see that the component in~\eqref{jConv2} can be bounded as in Assumption~\ref{A:NN},
namely with~$(\zeta_{1i},\zeta_{2i},\delta_{1i},\delta_{2i})=(s_j-\frac12,\frac12,\frac{3-s_j}2,\frac{s_j-1}2)$. Note that
$\delta_{1i}+\delta_{2i}=1$ and $\zeta_{2i}+\delta_{2i}=\frac{s_j}2<1$.

Finally, in case $s_j=1$, ~\eqref{jConv0} implies
\begin{align*}
 &\quad\norm{\left(\hat b_j(t,\Bigcdot)\cdot\nabla y_1\right)\norm{y_1}{\R}^{s_j-1}y_1
 -\left(\hat b_j(t,\Bigcdot)\cdot\nabla y_2\right)\norm{y_2}{\R}^{s_j-1}y_2}{L^2}\notag\\
 &\le\norm{\hat b_j(t,\Bigcdot)}{L^\infty}\Bigl(\norm{\nabla y_1}{L^2}\norm{y_1-y_2}{L^\infty}
 +\norm{\nabla (y_1-y_2)}{L^2}\norm{y_2}{L^\infty}\Bigr)\\
 &\le \norm{\hat b_j(t,\Bigcdot)}{L^\infty}\left(\norm{y_1}{V}\norm{y_1-y_2}{V}^\frac{1}{2}\norm{y_1-y_2}{\D(A)}^\frac{1}{2}
 +\norm{y_2}{V}^\frac{1}{2}\norm{y_2}{\D(A)}^\frac{1}{2}\norm{y_1-y_2}{V}\right).
  \end{align*}
 Again, we have a sum as in Assumption~\ref{A:NN}
 with~$(\zeta_{1i},\zeta_{2i},\delta_{1i},\delta_{2i})\in\{(1,0,\frac{1}{2},\frac{1}{2}),(\frac{1}{2},\frac{1}{2},1,0)\}$.
 Note that in either case~$\delta_{1i}+\delta_{2i}=1$ and $\zeta_{2i}+\delta_{2i}=\frac12$.

\begin{remark} Above in~\eqref{ex.NNr}, we may replace~$\norm{y}{\R}^{r_j-1}$ by~$y^{r_j-1}$ in case~$r_j\in\{2,3,4\}$ is an integer.
Analogously we may replace~$\norm{y}{\R}^{s_j-1}$ by~$y^{s_j-1}$ in case~$s_j=1$. The reason the absolute value
is taken in~\eqref{ex.NNr} is because we want $\NN(t,x,y(t,x))\in\R$, in order to have real valued
solutions~$y(t,x)\in\R$. 
\end{remark}

%%%%%%%%%%%%%%%%%%%%%%%%%%%%%%%%%%%%%%%
\subsection{The cases~$\Omega\subset\R^{1}$ and~$\Omega\subset\R^{2}$}\label{sS:d12}
In the cases~$\Omega\subset\R^{\mathbf d}$, $\mathbf d\in\{1,2\}$ we have the 
Sobolev embedding inequality~$\norm{z}{L^{q}}\le C\norm{z}{V}$ for all~$q<\infty$, see~\cite[Thm.~4.57]{DemengelDem12}.
In particular from~\eqref{DifPhi1} we can obtain
that the reaction terms satisfy an inequality in Assumption~\ref{A:NN} for all~$r_j>1$,
with~$(\zeta_{1i},\zeta_{2i},\delta_{1i},\delta_{2i})=(r_j-1,0,1,0)$, and from~\eqref{jConv0} the convection terms satisfy
\begin{align*}
 &\quad\norm{\left(\hat b_j(t,\Bigcdot)\cdot\nabla y_1\right)\norm{y_1}{\R}^{s_j-1}y_1
 -\left(\hat b_j(t,\Bigcdot)\cdot\nabla y_2\right)\norm{y_2}{\R}^{s_j-1}y_2}{L^2}\notag\\
 &\le\norm{\hat b_j(t,\Bigcdot)}{L^\infty}\!\!
 \Bigl(s_j\norm{\nabla y_1}{L^2}\left(\norm{\norm{y_1}{\R}^{s_j-1}+\norm{y_2}{\R}^{s_j-1}}{L^3}\right)\norm{y_1-y_2}{L^6}\\
 &\hspace*{1em}+\norm{\nabla (y_1-y_2)}{L^2}\norm{\norm{y_2}{\R}^{s_j-1}}{L^3}\norm{y_1-y_2}{L^6}\Bigr)\\
 &\le C\norm{\hat b_j(t,\Bigcdot)}{L^\infty}\!\!
 \Bigl(s_j\left(\norm{y_1}{V}^{s_j}+\norm{y_2}{V}^{s_j}\right)\norm{y_1-y_2}{V}
 +\norm{y_2}{V}^{s_j-1}\norm{y_1-y_2}{V}^2\Bigr).
  \end{align*}
Hence, they satisfy an
inequality as in Assumption~\ref{A:NN}, with~$(\zeta_{1i},\zeta_{2i},\delta_{1i},\delta_{2i})\in\{(s_j,0,1,0),(s_j-1,0,2,0)\}$.
Therefore, in the cases~$\Omega\subset\R^{\mathbf d}$, and~$\mathbf d\in\{1,2\}$, we can take nonlinearities as
 \begin{align*}
&\NN(t,y)=\NN(t,x,y)=\textstyle\sum\limits_{j=1}^{n}\left(\hat a_j(t,x)\norm{y}{\R}^{r_j-1}y
+\left(\hat b_j(t,x)\cdot\nabla y\right)\norm{y}{\R}^{s_j-1}y\right),\quad y=y(t,x),
\\
&\mbox{with}\quad
 r_j>1,\quad s_j\ge1,\quad\hat a_j\in L^\infty(\R_0\times\Omega,\R),\quad
\hat b_j(t)\in L^\infty(\R_0\times\Omega,\R^{\mathbf d}),\quad (t,x)\in\R_0\times\Omega.
\end{align*}

\begin{remark}
 Note that since~$\zeta_{2i}=\delta_{2i}=0$, by taking~$\FF_{\M_\sigma}=\lambda\Id$ we find that Assumption~\ref{A:FFM}
will follow with~$\eta_2=\beta_2=1$ and~$\dnorm{\zeta_1+\delta_1}{}<2$. In particular, Assumption~\ref{A:FFM} is satisfied, in
case~$\mathbf d\in\{1,2\}$, if we have no convection terms and the reaction terms satisfy~$1<r_j<2$. On the other hand
we underline that Assumption~\ref{A:FFM} is part of the sufficient conditions for stability of the closed system,
we do not claim that Assumption~\ref{A:FFM} is necessary. That is, our results do not show neither that
the feedback obtained by taking~$\FF_{\M_\sigma}=\lambda\Id$ is able to stabilize the system nor that it is not.
\end{remark}

%%%%%%%%%%%%%%%%%%%%%%%%%%%%%%%%%%%%%%%
%%%%%%%%%%%%%%%%%%%%%%%%%%%%%%%%%%%%%%%
\section{Numerical results}\label{S:simul}
We present here numerical results in the one dimensional case, showing the stabilizing
performance of the controller. Our parabolic equation evolving in the unit interval~$(0,L)$ reads
\[
 \fractx{\p}{\p t}y+(-\nu\Delta +\Id)y +(a-1)y+b\cdot\nabla y -c_\NN\norm{y}{\R}^{p-1}y
 =\KKK(y),\qquad y(t,0)=y(t,L)=0,\qquad y(0)=c_{\rm ic}y_0,
\]
where Dirichlet boundary conditions are imposed and where we have taken
\begin{align*}
 &\nu=0.1,\qquad L=1,\qquad y_0=\sin(2\pi x),\qquad p=\tfrac{13}{4},\qquad c_{\rm ic}\in\{2,4\},
 \qquad c_\NN\in\{0,1\},\\
  &a(t,x)\coloneqq -35\nu\pi^2-10\norm{\cos(4t)x\cos(xt)}{\R},\qquad\!
 b(t,x) \coloneqq -4\cos(3\pi t)-5(1-x)^2+2,\\
 &
 \KKK\in\left\{\KKK_{U_{\#\M_\sigma}}^{\FF_{\M_\sigma}},\KKK_{U_{\#\M_\sigma}}^{\FF_{\M_\sigma},\NN}\right\},
 \qquad\FF_{\M_\sigma}\in\left\{\lambda\Id,-\nu\Delta+\lambda\Id\right\},\qquad\lambda=1,\qquad\M_\sigma=\M=\{1,2,\dots,M\}.
 \end{align*}
Above~$(t,x)\in(0,+\infty)\times(0,1)$. Recall that~$\KKK_{U_{\#\M_\sigma}}^{\FF_{\M_\sigma}}$ and~$\KKK_{U_{\#\M_\sigma}}^{\FF_{\M_\sigma},\NN}$ are defined
 in~\eqref{FeedKyx} and~\eqref{FeedKy-Nx}, respectively.

For a given~$M\in\N_0$, the actuators were taken as in~\eqref{Act-mxe},
~$U_{\#\M_\sigma}=U_{M}=\linspan\{1_{\omega_i}\mid i\in\M\}$, with~$r=0.1$ and~$L_1=L=1$, that is,
the actuators cover $10\%$ of the domain~$(0,L)$.

 The simulations have been performed for a spatial finite element approximation of the equation, based on piecewise linear
 elements (hat functions). The interval domain~$[0,1]$
 have been discretized by~$N+1$ equidistant nodes~$[0,\frac{1}{N},\frac{2}{N},\dots,\frac{N-1}{N},1]$,
 with~$N=10000$.
 To solve the associated {\sc ode}s we followed a Crank-Nicolson scheme with the time interval~$(0,+\infty)$ discretized with
 timestep~$k=0.0001$, $[0,k,2k,3k,\dots)$. For further details, see~\cite{RodSturm18}.
In the figures below we are going to plot the behaviour of either~$\norm{y}{H}$ or~$\norm{y}{V}$.
Note that since~$V\xhookrightarrow{}H$, if~$\norm{y}{H}$ goes to~$+\infty$ then also~$\norm{y}{V}$ does.
Analogously, if~$\norm{y}{V}$ goes to~$0$ then also~$\norm{y}{H}$ does.
These norms have been computed/approximated
as~$\norm{y(t_j)}{H}^2=\overline y(t_j)^\top\Ma\overline y(t_j)$
and~$\norm{y(t_j)}{V}^2=\overline y(t_j)^\top(\nu\St+\Ma)\overline y(t_j)$.
Here~$\Ma$ and~$\St$ are, respectively, the Mass and Stiffness matrices, and~$\overline y(t_j)$
is the discrete solution at a given discrete time~$t_j=jk$. The simulations have been run for time~$t\in[0,5]$,
and have been performed in MATLAB.
 
In the figures below~$\FF_{M}=\FF_{\M_\sigma}$,
and~``${\rm Ktype}={\rm Klinz}$'' means that we have taken the linearization
based feedback~$\KKK=\KKK_{U_{\#\M_\sigma}}^{\FF_{\M_\sigma}}$, while~``${\rm Ktype}={\rm Knonl}$''
means that we have taken the nonlinear
feedback~$\KKK=\KKK_{U_{\#\M_\sigma}}^{\FF_{\M_\sigma},\NN}$.
Note that with~$c_\NN=0$ the system is linear, while with~$c_\NN=1$ the system is nonlinear.
Furthermore, $FeedOn$ stands for the time interval on which the control is switched on.
For example in Figure~\ref{Fig:UNC} the control is switched {\em off} on the entire time interval $[0,5)$, while in
Figure~\ref{Fig:CT.SF=LL} it is is switched {\em on} on the entire time interval $[0,5)$.

In Figure~\ref{Fig:UNC}, we observe
that both the linear and the nonlinear 
systems are unstable. The linear system is exponentially unstable and the nonlinear system blows up in finite time.
\begin{figure}[ht]
\centering
\subfigure%[]
{\includegraphics[width=0.312\textwidth]{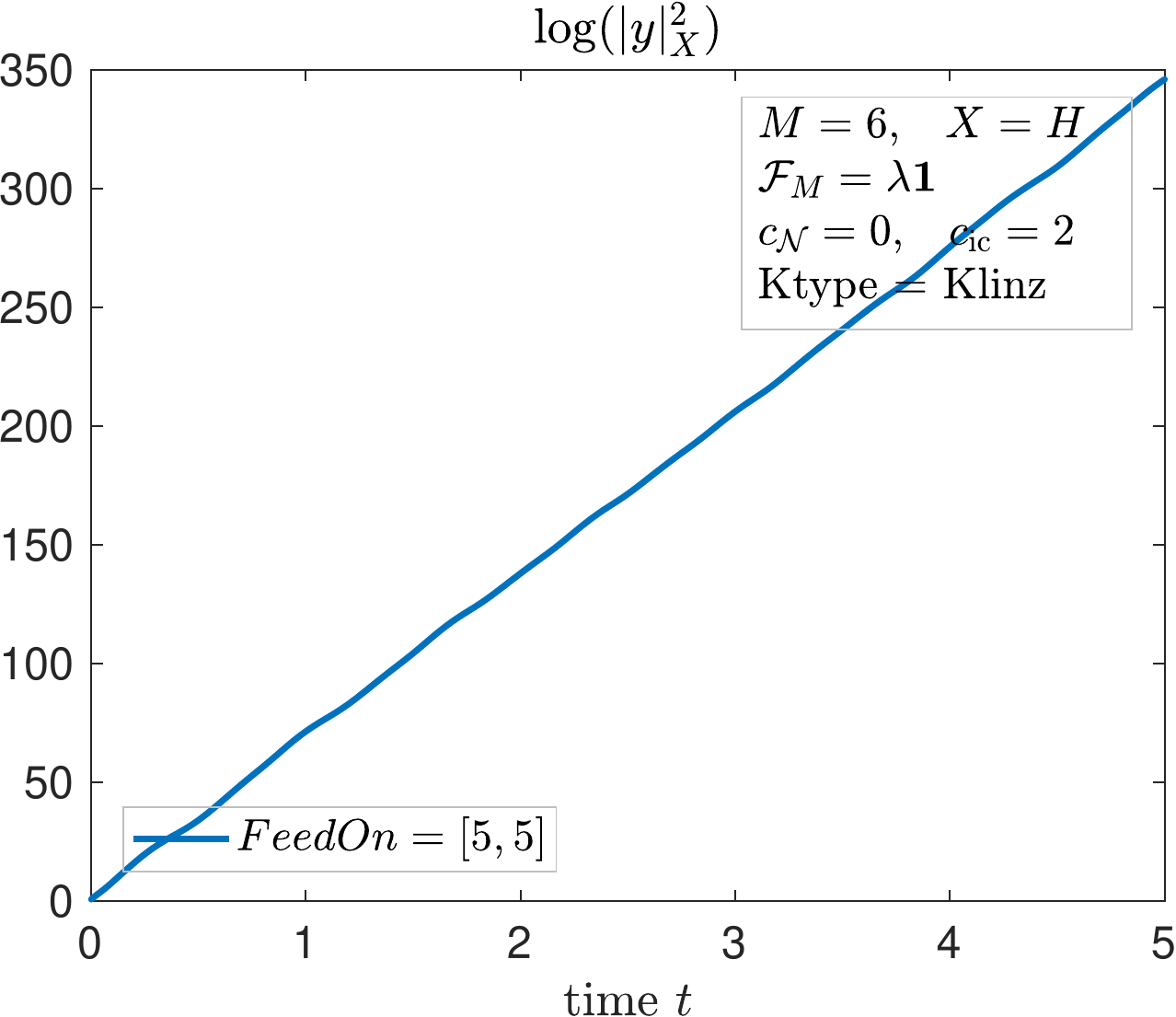}}
\qquad\subfigure%[]
{\includegraphics[width=0.325\textwidth]{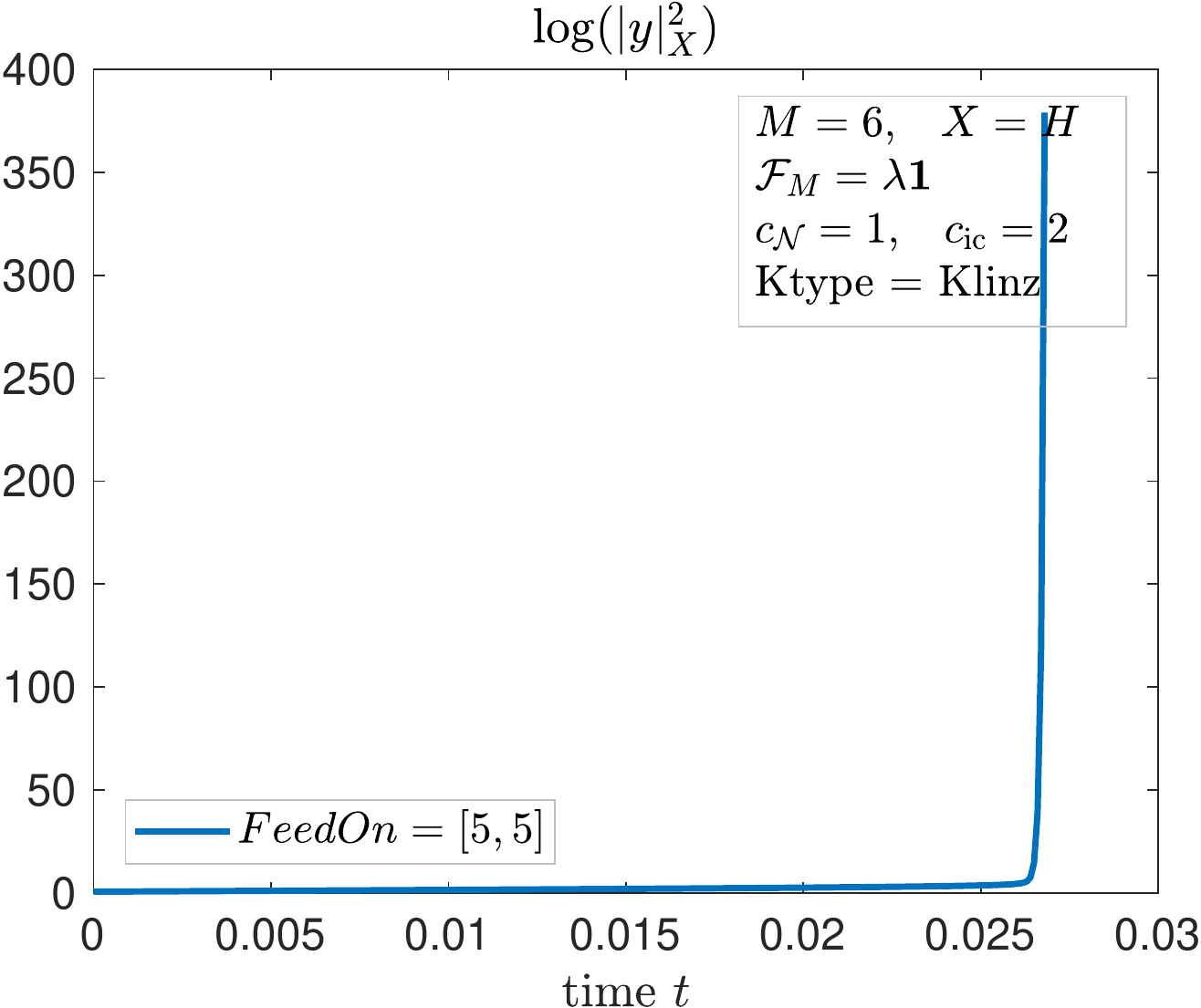}}
\caption{Uncontrolled solutions.  Linear and nonlinear systems.}
\label{Fig:UNC}
\end{figure}

In Figure~\ref{Fig:CT.SF=LL} we see that, with~$6$ actuators, the linear
feedback is able to stabilize the linear system, for both choices
of~$\FF_{M}$. In this example,
the choice of $\FF_{M}=-\nu\Delta+\lambda\Id$ leads to faster exponential decay rate
of the $V$-norm.
\begin{figure}[ht]
\centering
\subfigure%[]
{\includegraphics[width=0.321\textwidth]{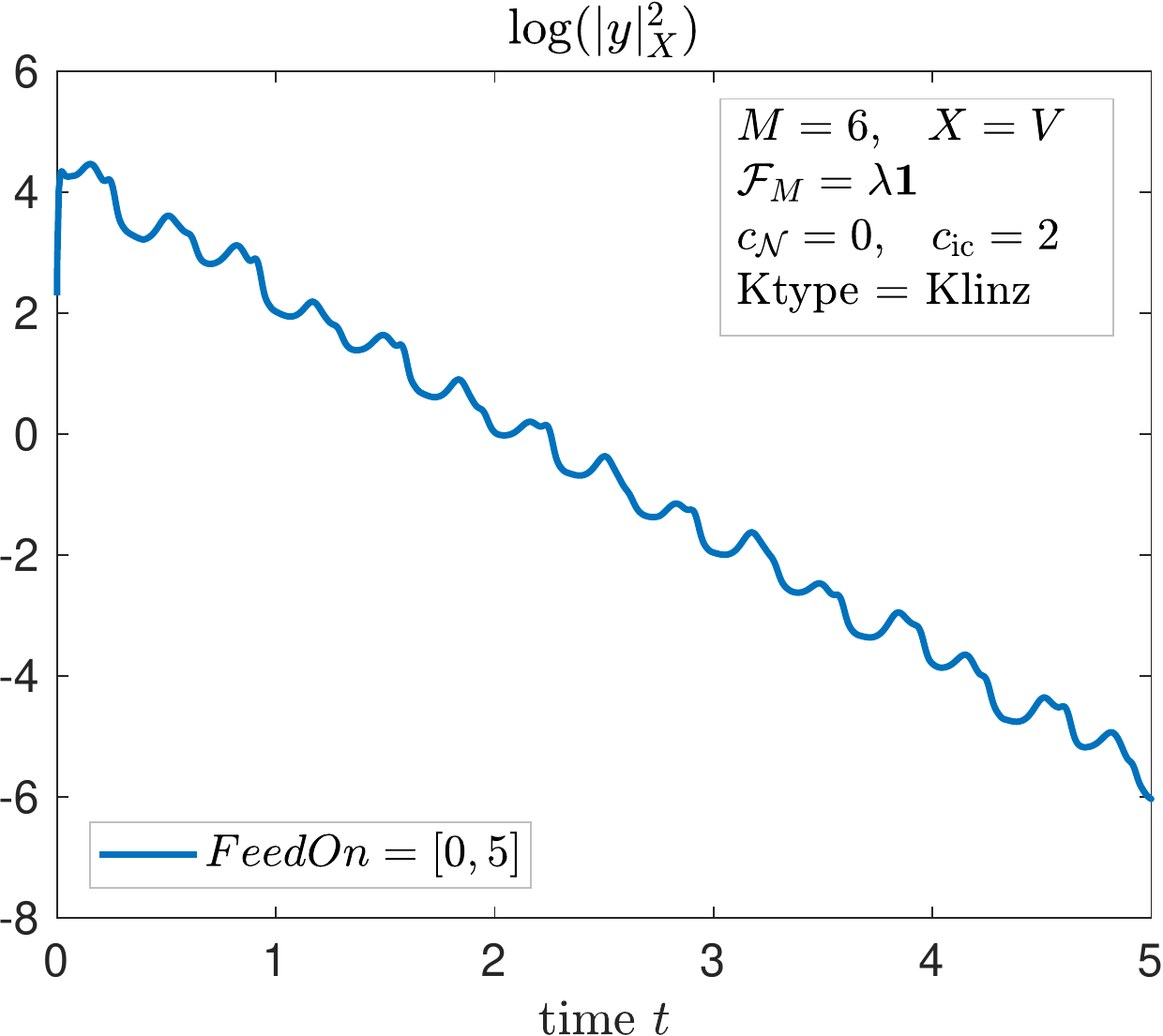}}
\qquad\subfigure%[]
{\includegraphics[width=0.325\textwidth]{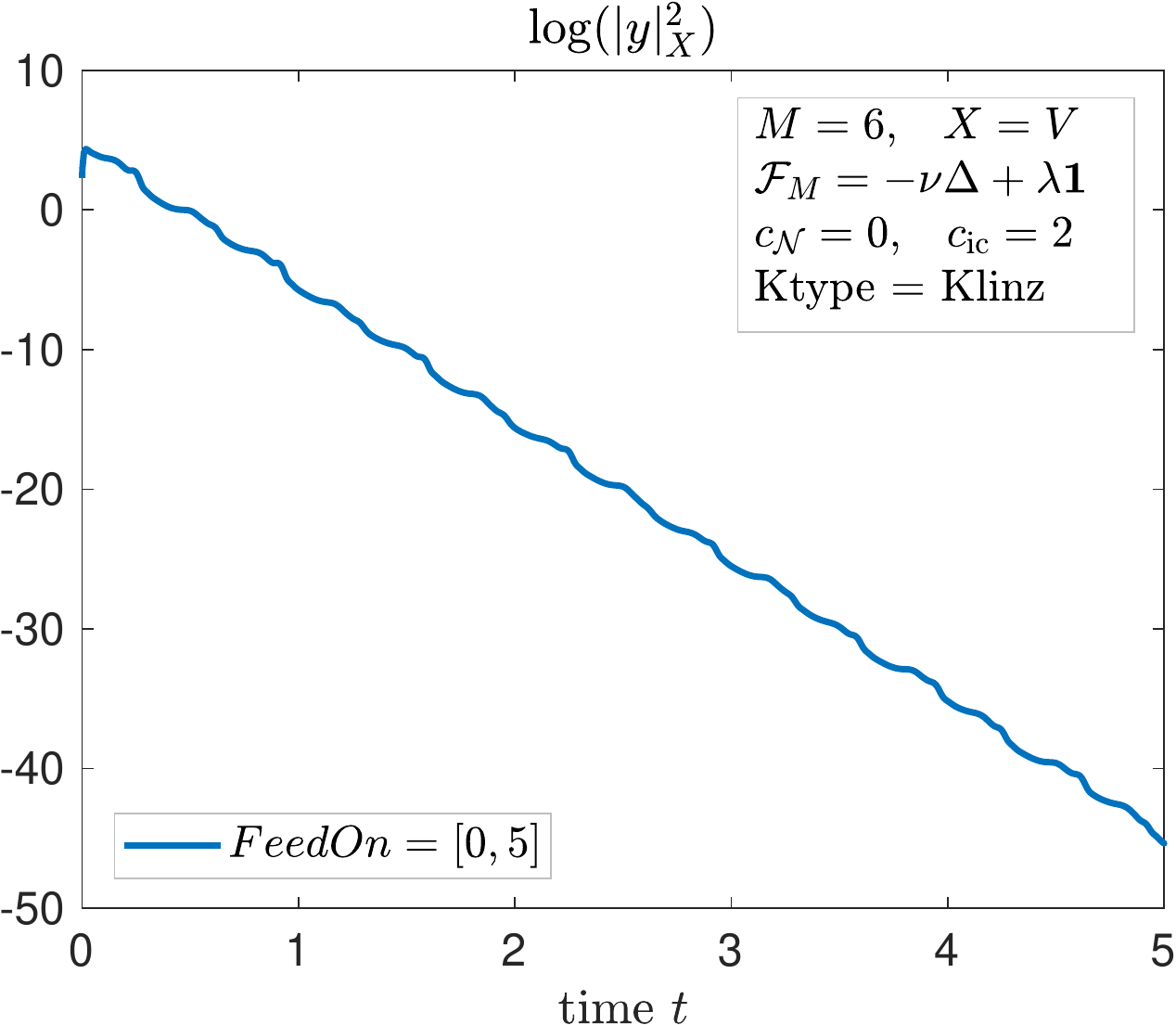}}
\caption{Linear systems and linear feedback.}
\label{Fig:CT.SF=LL}
\end{figure}

In Figure~\ref{Fig:CT.SF=NL} we see that the same linear feedback, is not
able to stabilize the nonlinear system. This is because the initial condition is too big. Recall that it is known that
we can expect such
linearization based feedback to be able to stabilize the nonlinear system only if the norm of the initial condition
is small enough (local stability).
\begin{figure}[ht]
\centering
\subfigure%[]
{\includegraphics[width=0.32\textwidth]{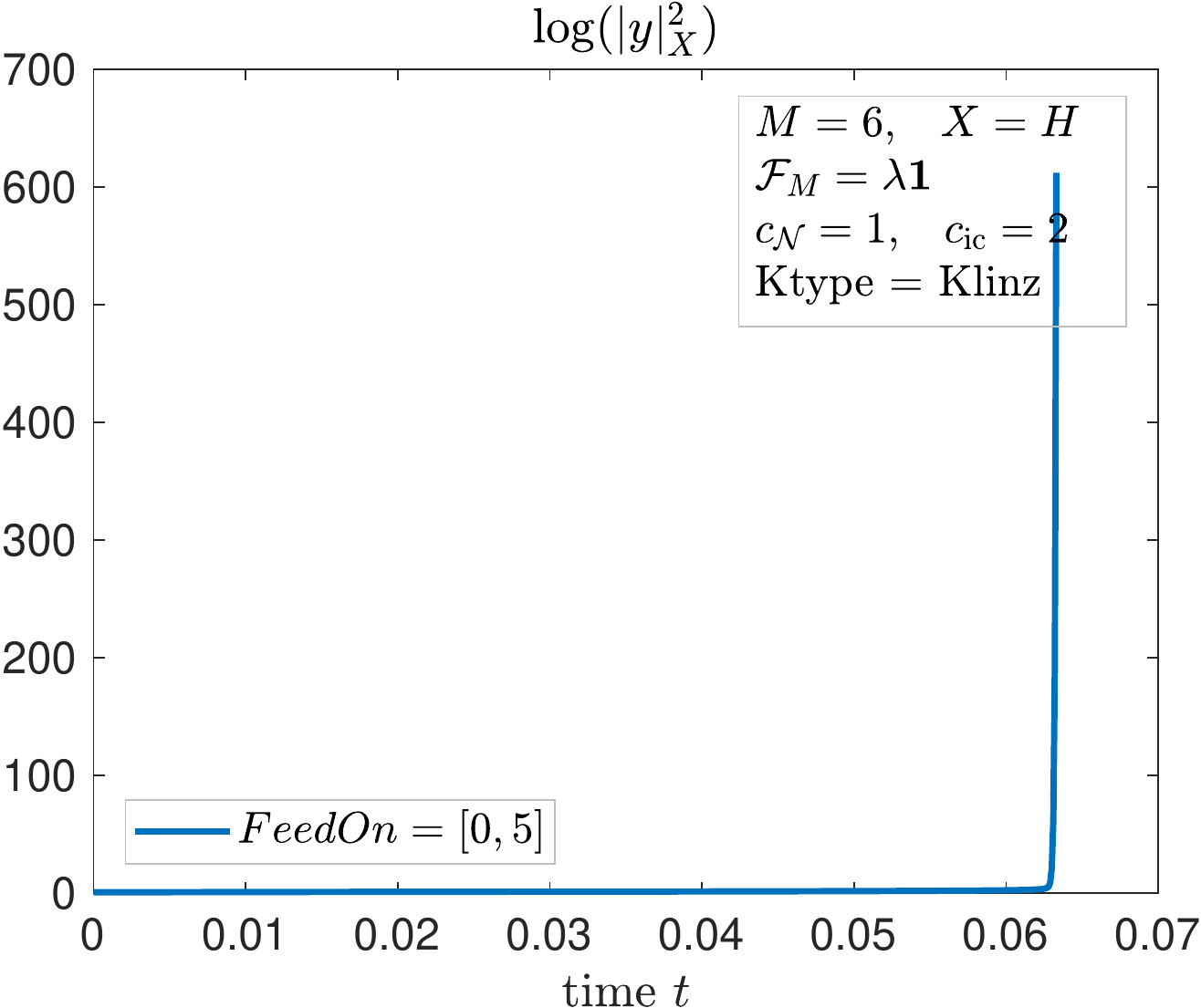}}
\qquad\subfigure%[]
{\includegraphics[width=0.325\textwidth]{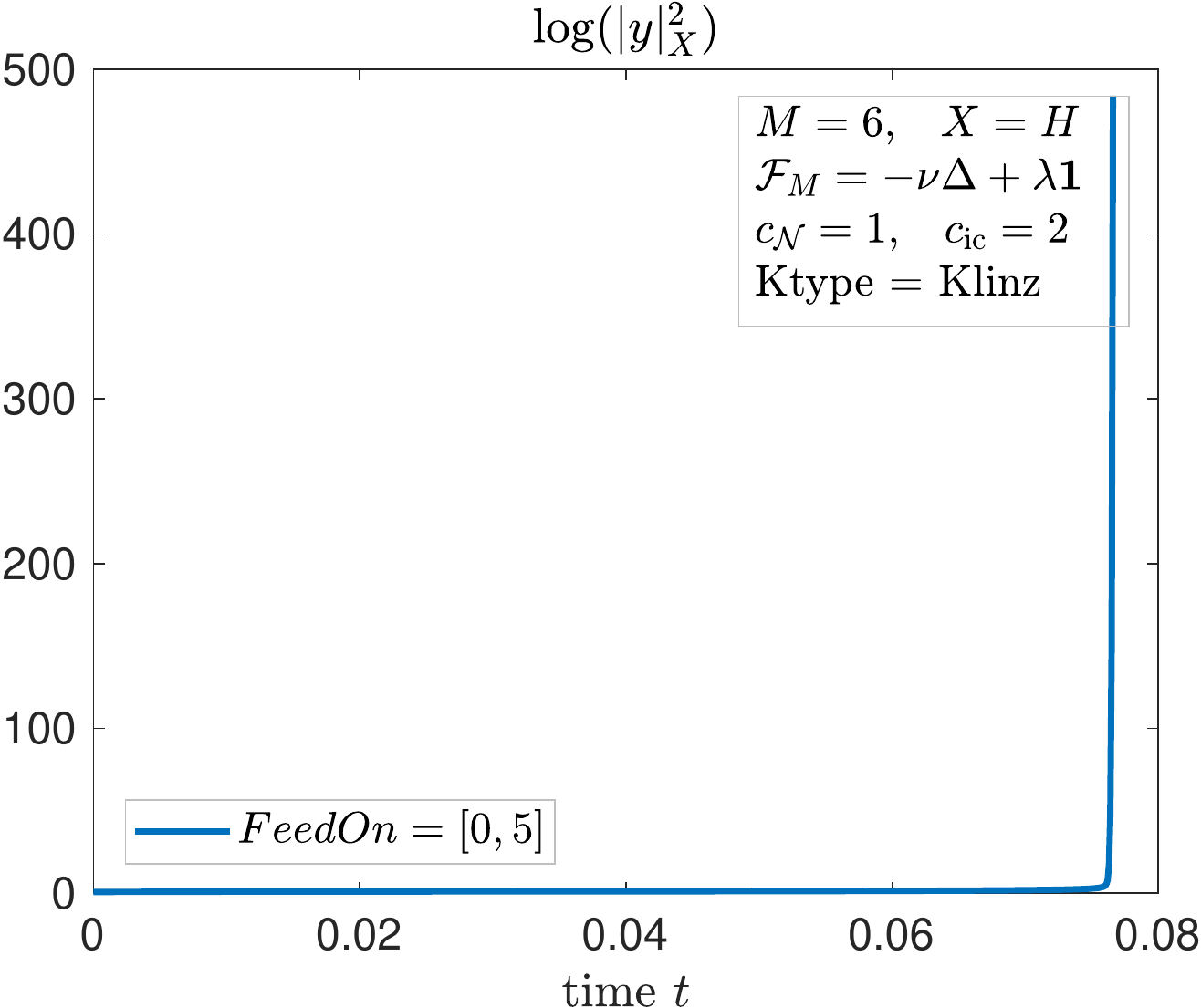}}
\caption{Nonlinear systems and linear feedback.}
\label{Fig:CT.SF=NL}
\end{figure}

In Figure~\ref{Fig:CT.SF=NN} we observe that the full nonlinear feedback with~$6$ actuators and with~$\FF_{M}=-\nu\Delta+\lambda\Id$ succeeds to
stabilize the solution, while with the choice~$\FF_{M}=\lambda\Id$ it fails to. The latter
choice~$\FF_{M}=\lambda\Id$ succeeds by taking~$7$ actuators.
\begin{figure}[ht]
\centering
\subfigure%[]
{\includegraphics[width=0.325\textwidth]{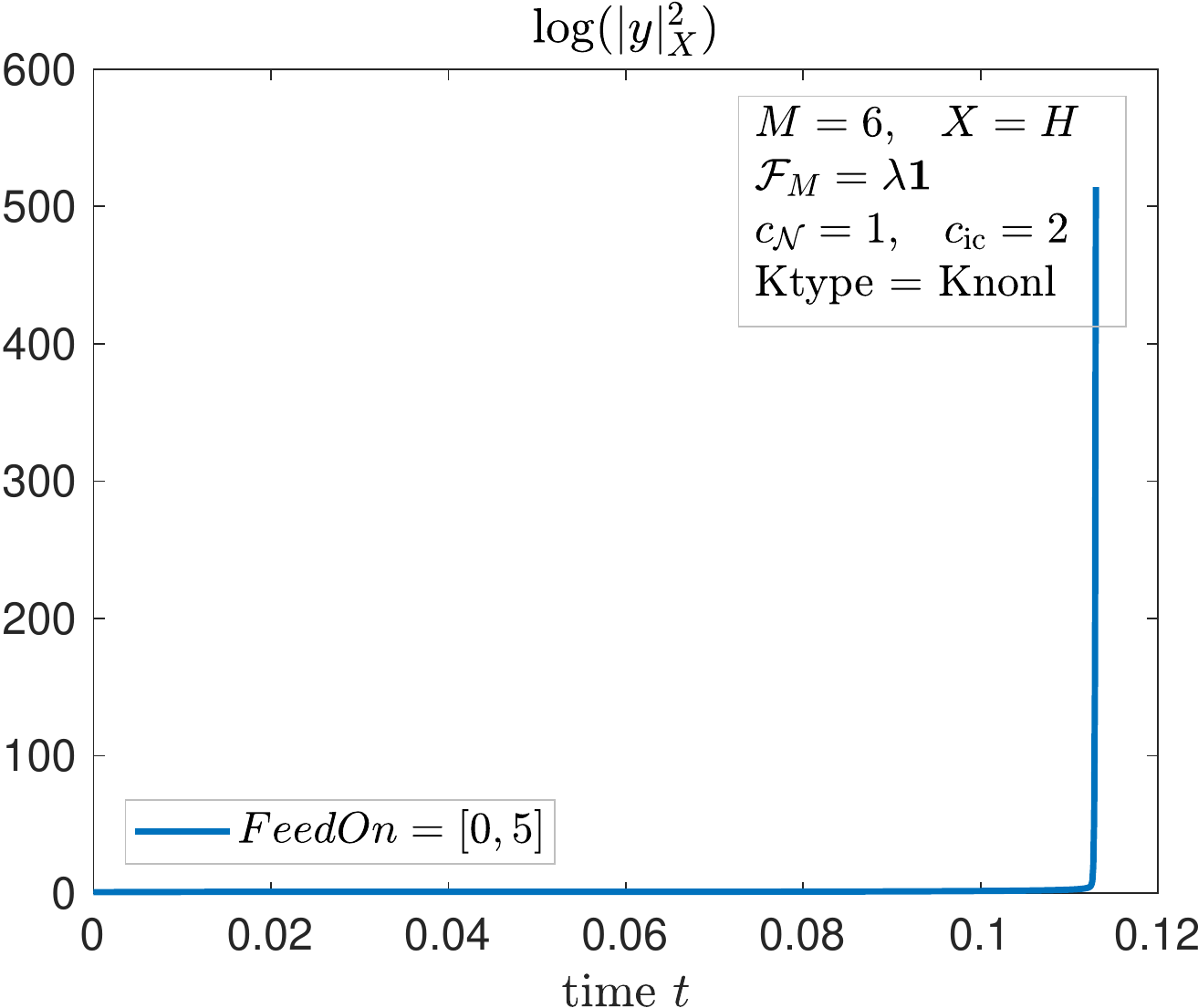}}
\subfigure%[]
{\includegraphics[width=0.31\textwidth]{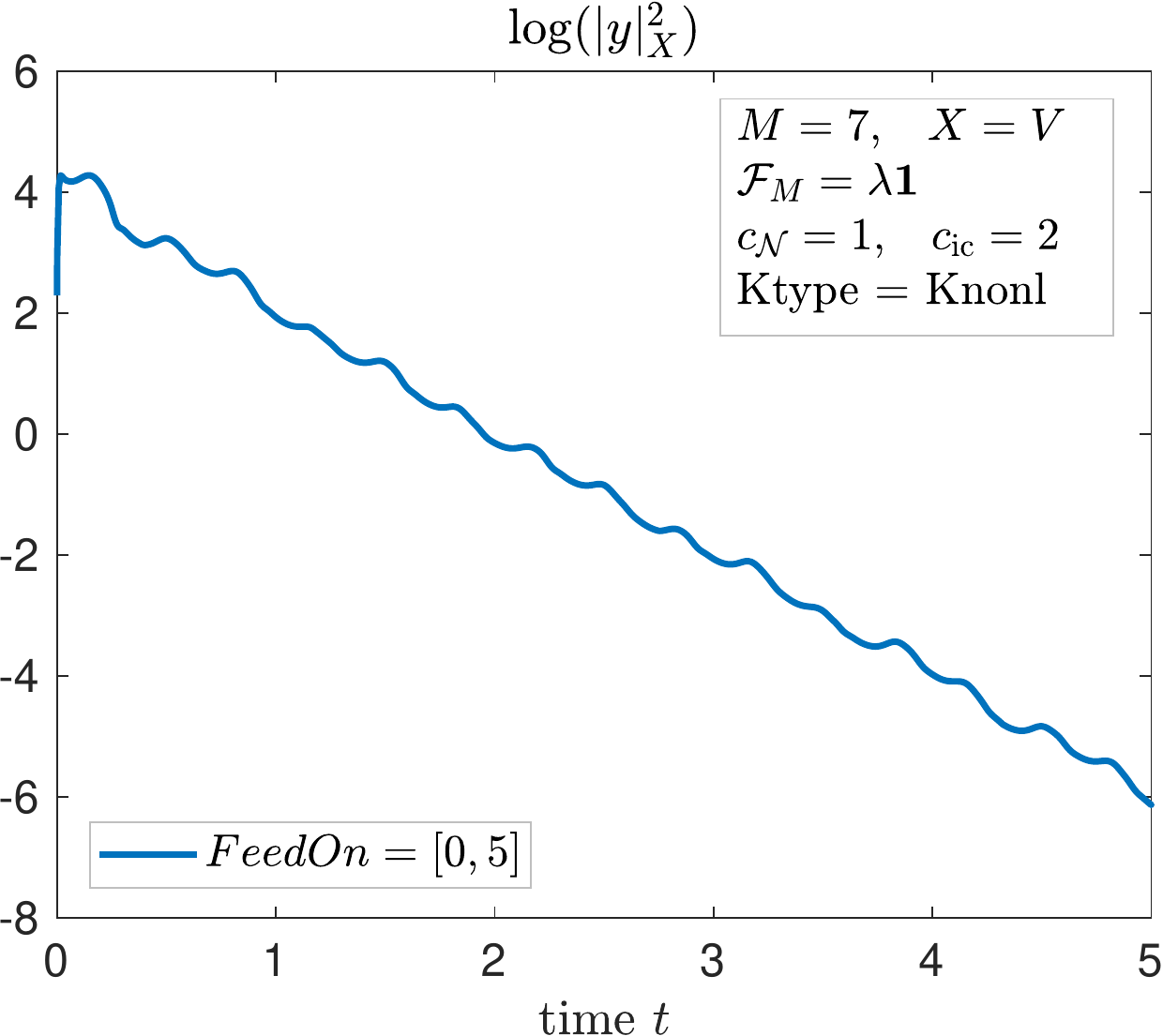}}
\subfigure%[]
{\includegraphics[width=0.32\textwidth]{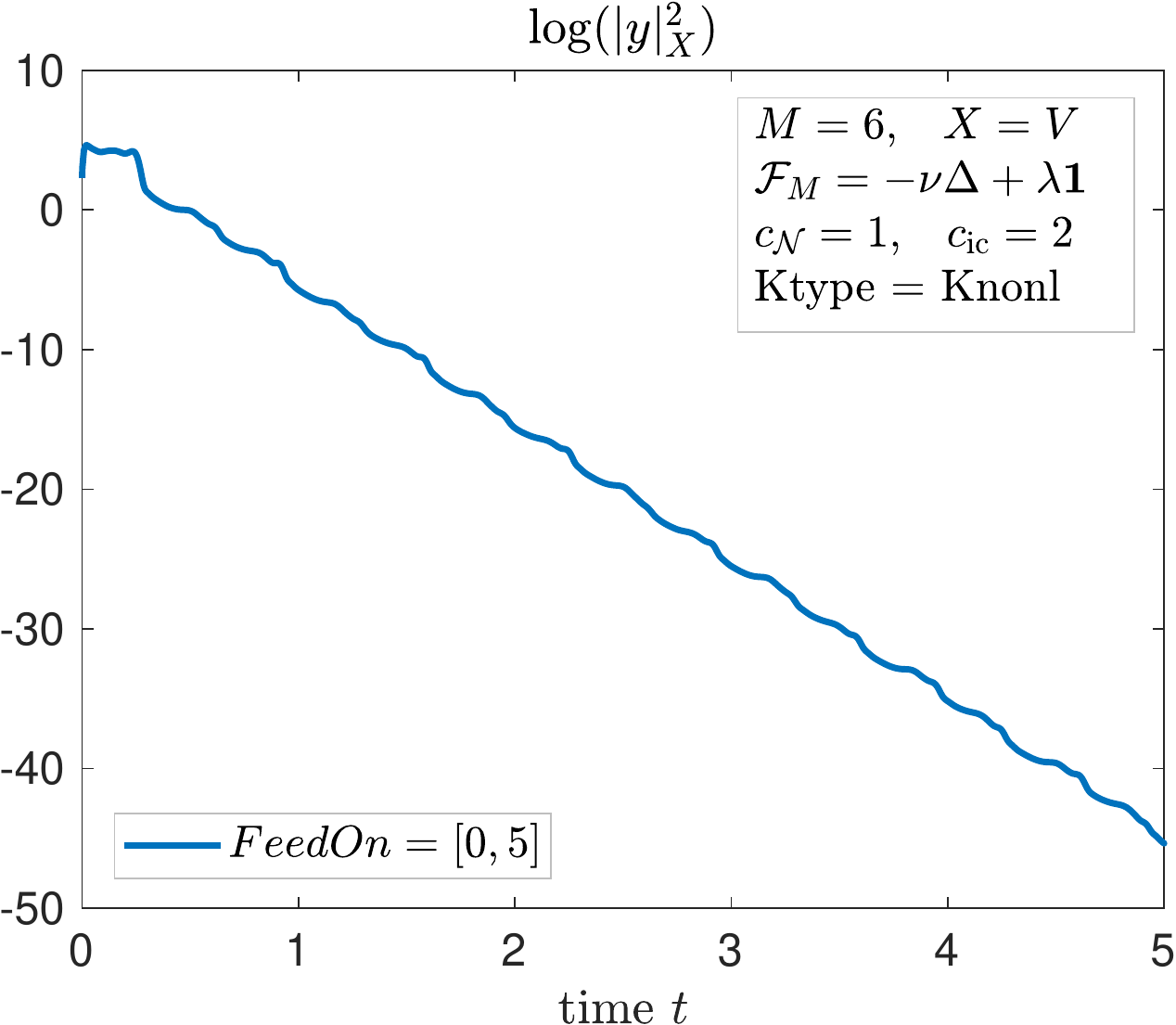}}
\caption{Nonlinear systems and nonlinear feedback.}
\label{Fig:CT.SF=NN}
\end{figure}
Figure~\ref{Fig:CT.SF=NN.fic4} shows that, for a bigger initial condition,
the same nonlinear feedback, with~$7$ actuators
is not anymore able to stabilize the system for both choices~$\FF_{M}=-\nu\Delta+\lambda\Id$ and~$\FF_{M}=\lambda\Id$.
Finally, in Figure~\ref{Fig:CT.SF=NN.fic4.M789}
we observe that by increasing the number~$M$ of actuators the nonlinear feedback is again able to stabilize
the system. This could give raise to the question on whether by incresing~$M$ would also lead to the stability of the linearization based
closed-loop system, Figure~\ref{Fig:CT.SF=NL102040} shows that this is not the case.
\begin{figure}[ht]
\centering
\subfigure%[]
{\includegraphics[width=0.325\textwidth]{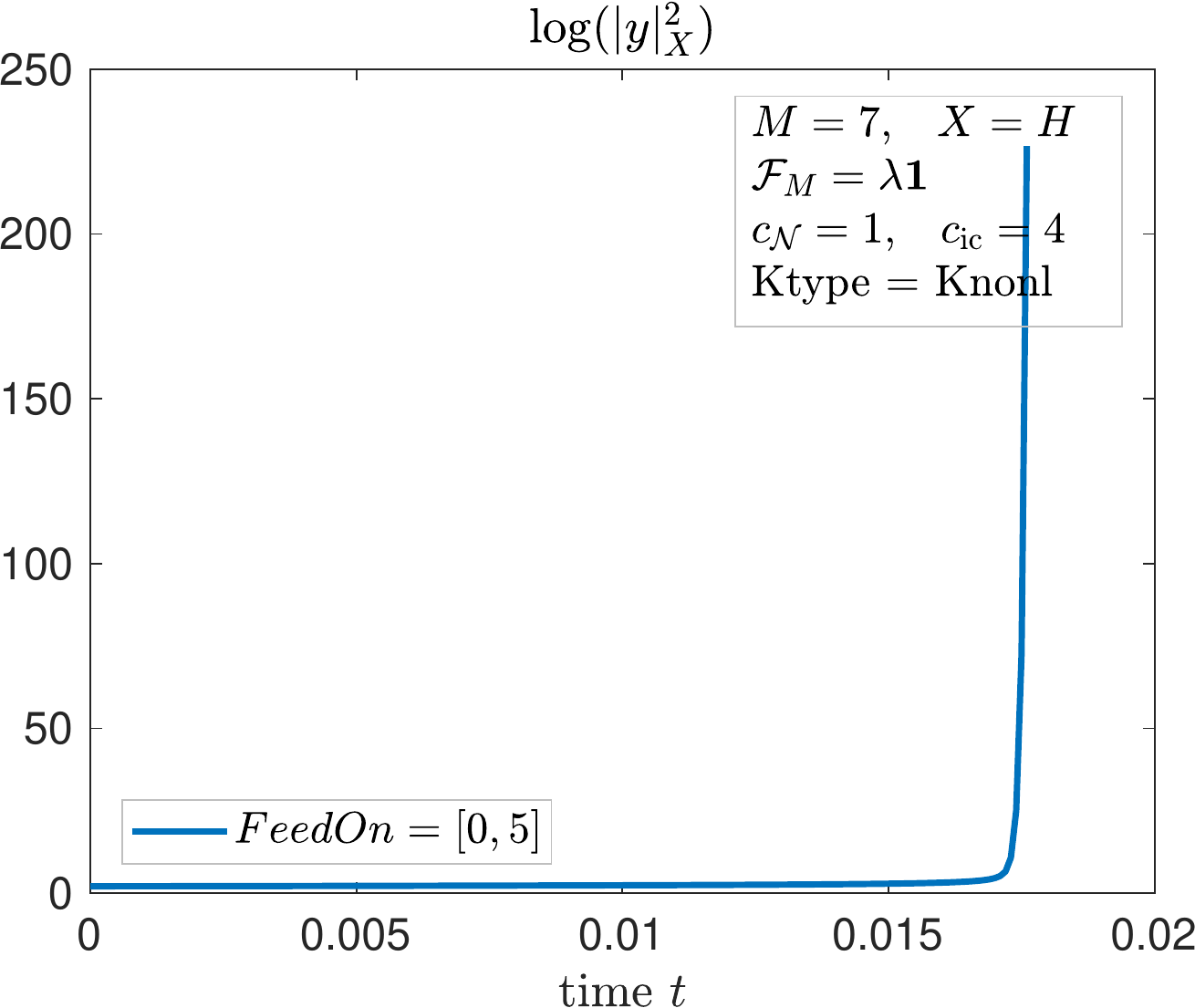}}
\qquad\subfigure%[]
{\includegraphics[width=0.325\textwidth]{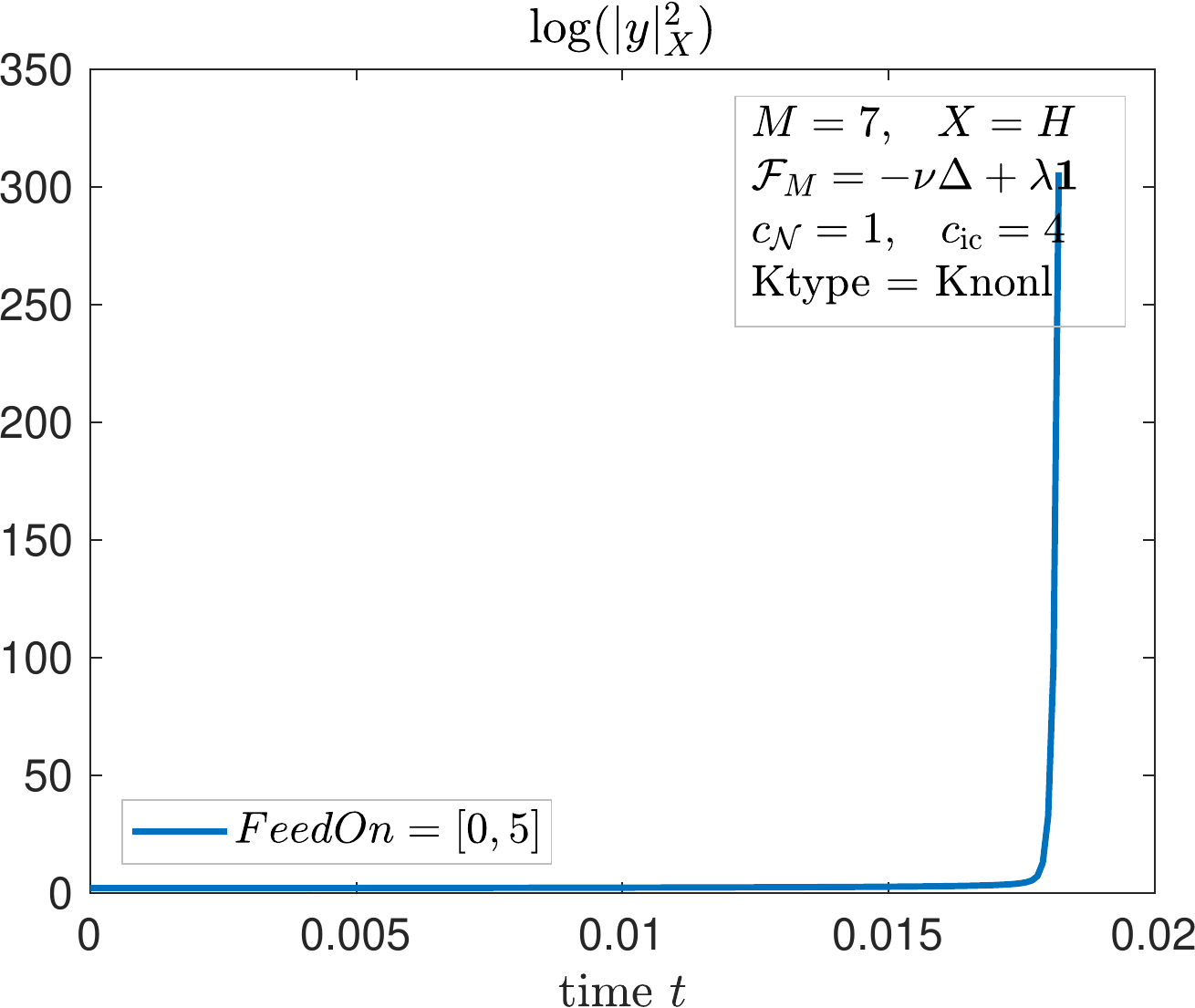}}
\caption{Nonlinear systems and nonlinear feedback. Bigger initial condition.}
\label{Fig:CT.SF=NN.fic4}
\end{figure}
\begin{figure}[ht]
\centering
\subfigure%[]
{\includegraphics[width=0.325\textwidth]{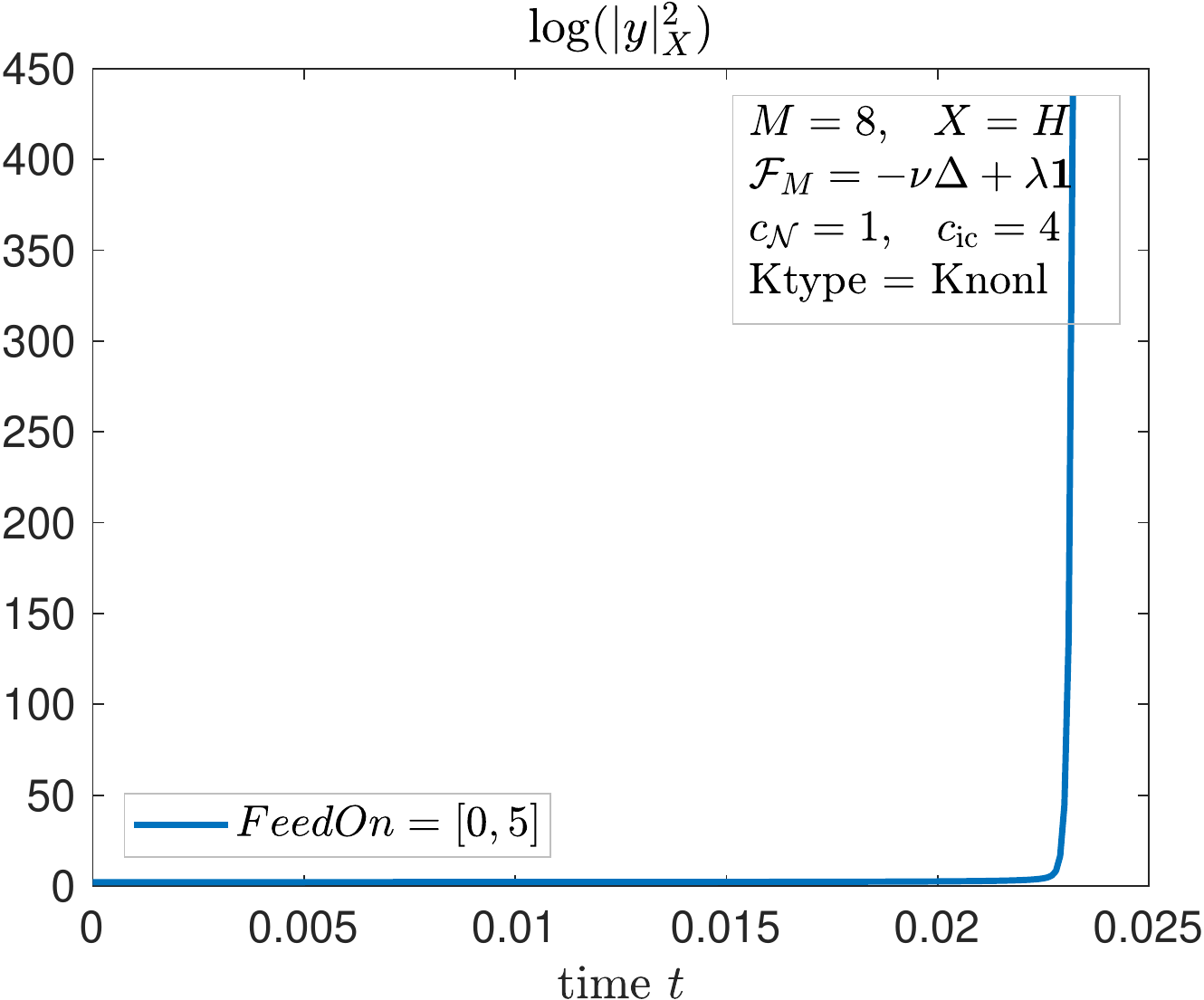}}
\subfigure%[]
{\includegraphics[width=0.315\textwidth]{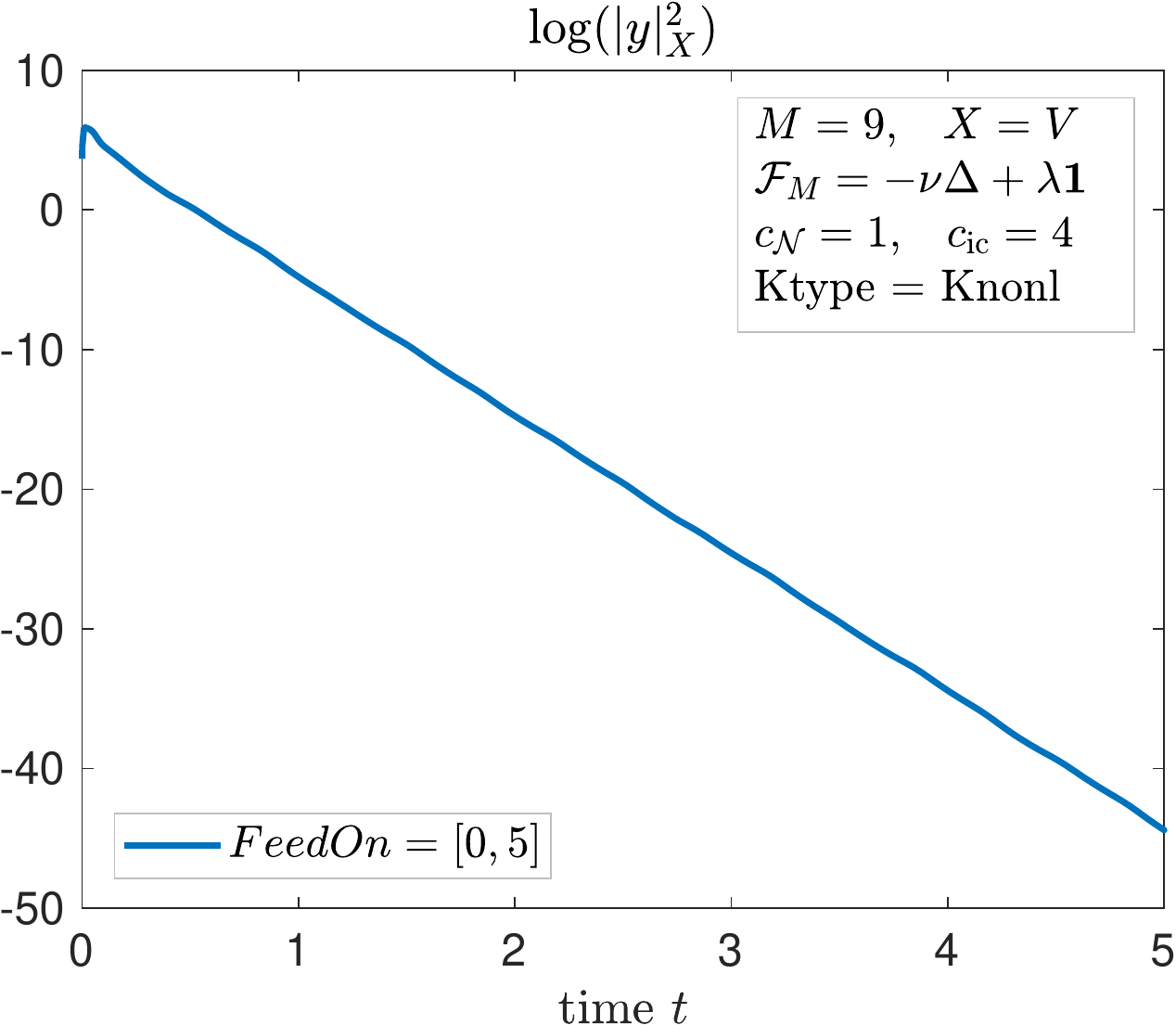}}
\subfigure%[]
{\includegraphics[width=0.315\textwidth]{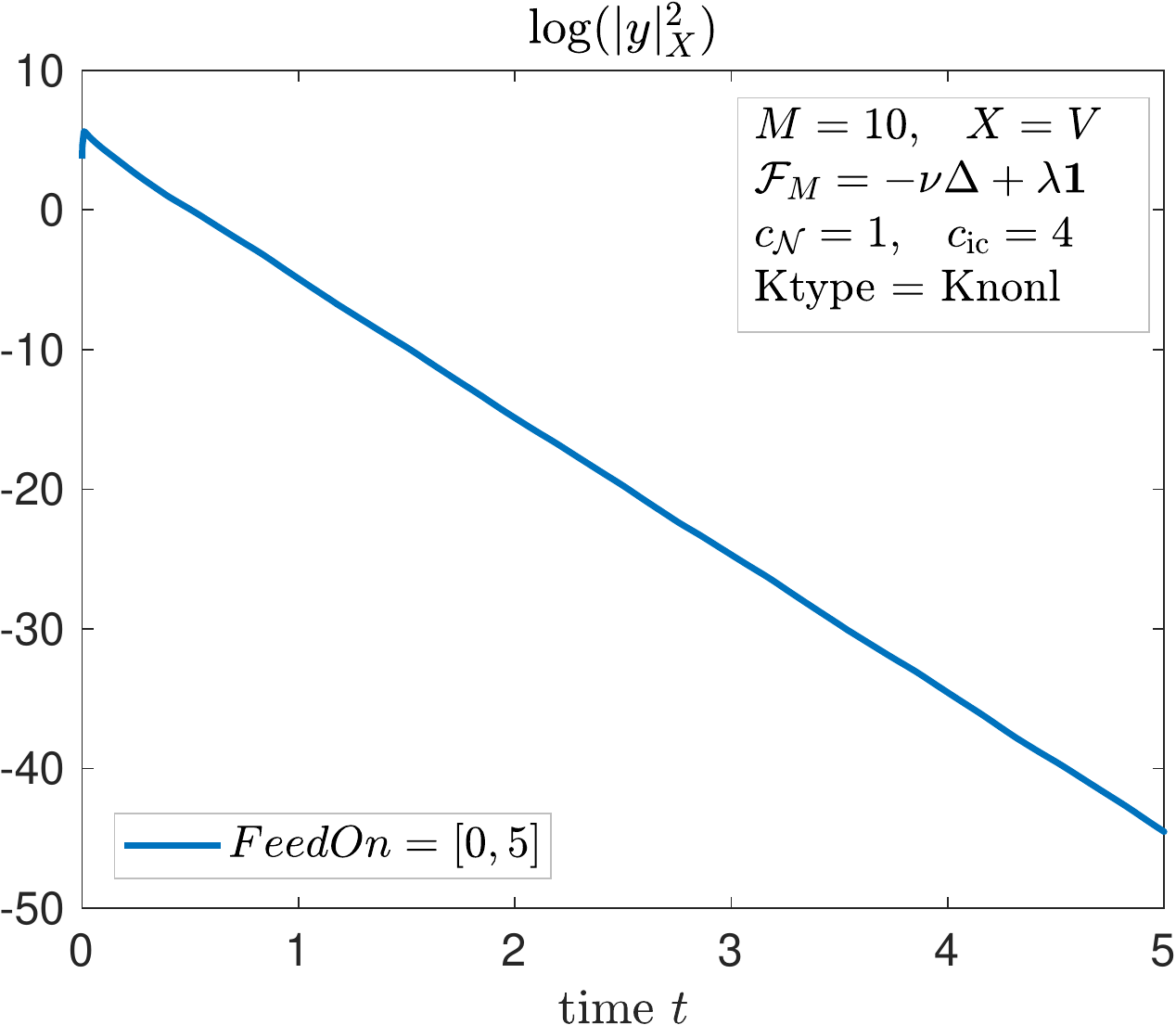}}\\
\subfigure%[]
{\includegraphics[width=0.327\textwidth]{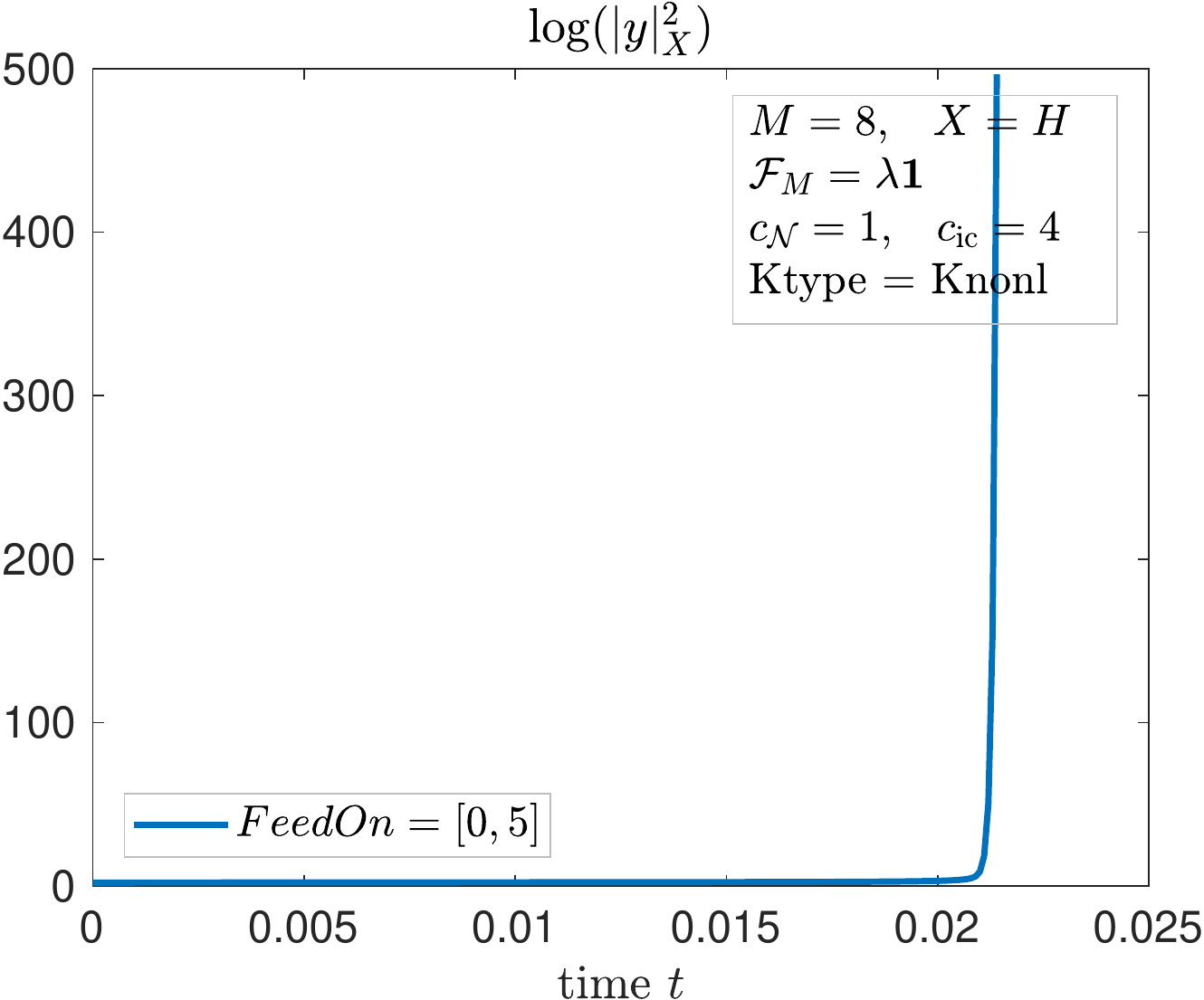}}
\subfigure%[]
{\includegraphics[width=0.327\textwidth]{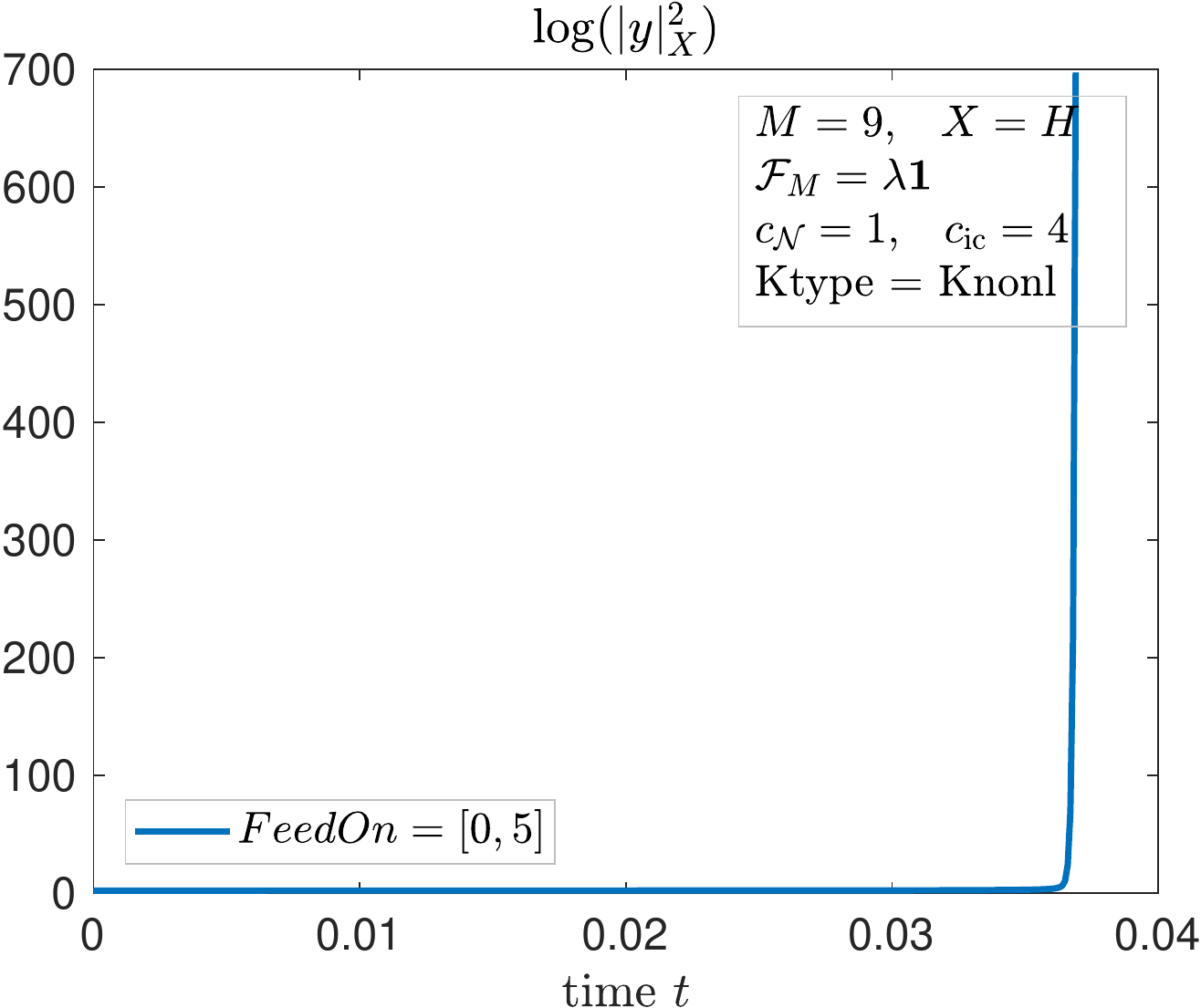}}
\subfigure%[]
{\includegraphics[width=0.31\textwidth]{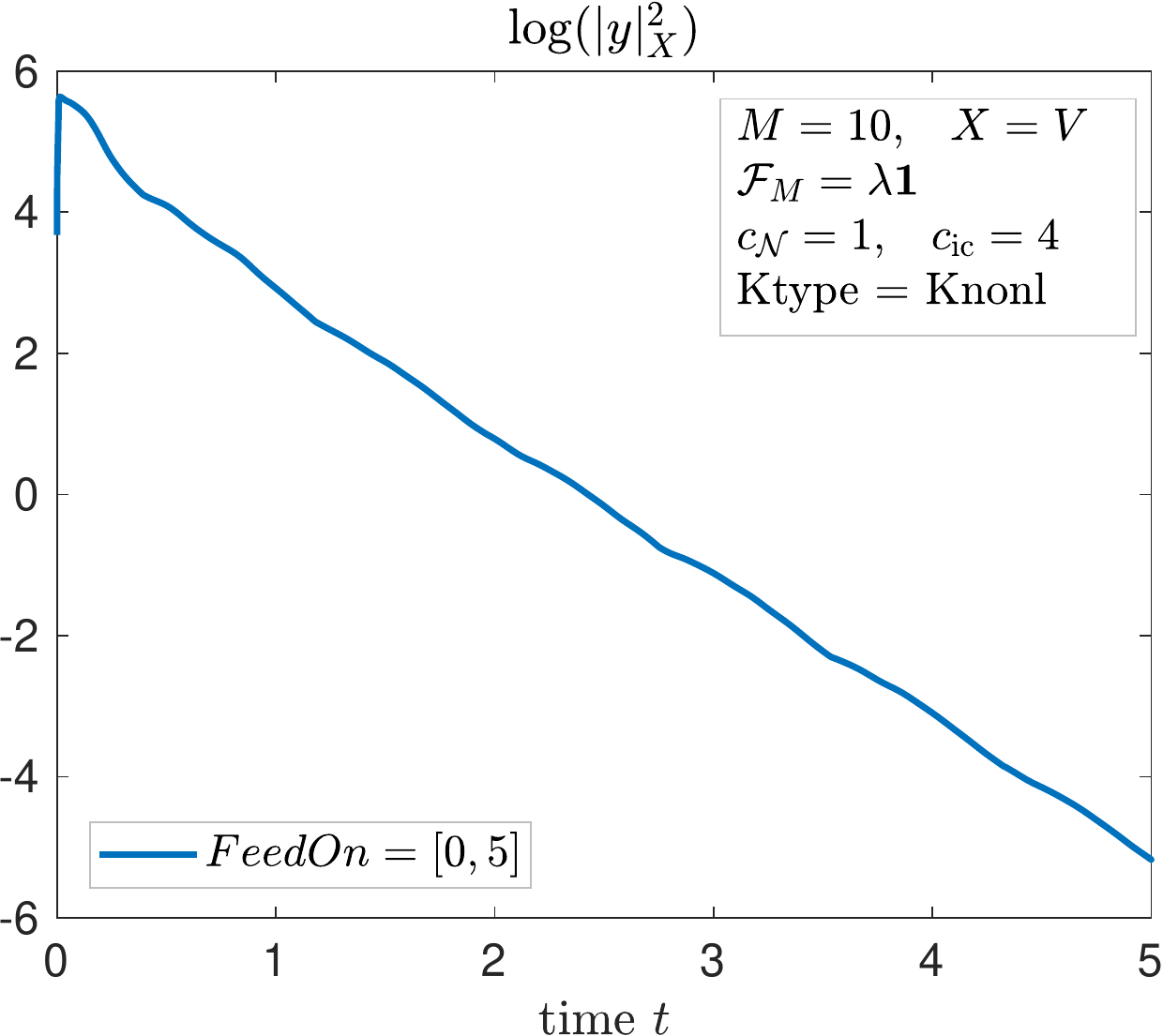}}
\caption{Nonlinear systems and nonlinear feedback. Increasing the number of actuators.}
\label{Fig:CT.SF=NN.fic4.M789}
\end{figure}

\begin{figure}[ht]
\centering
\subfigure%[]
{\includegraphics[width=0.315\textwidth]{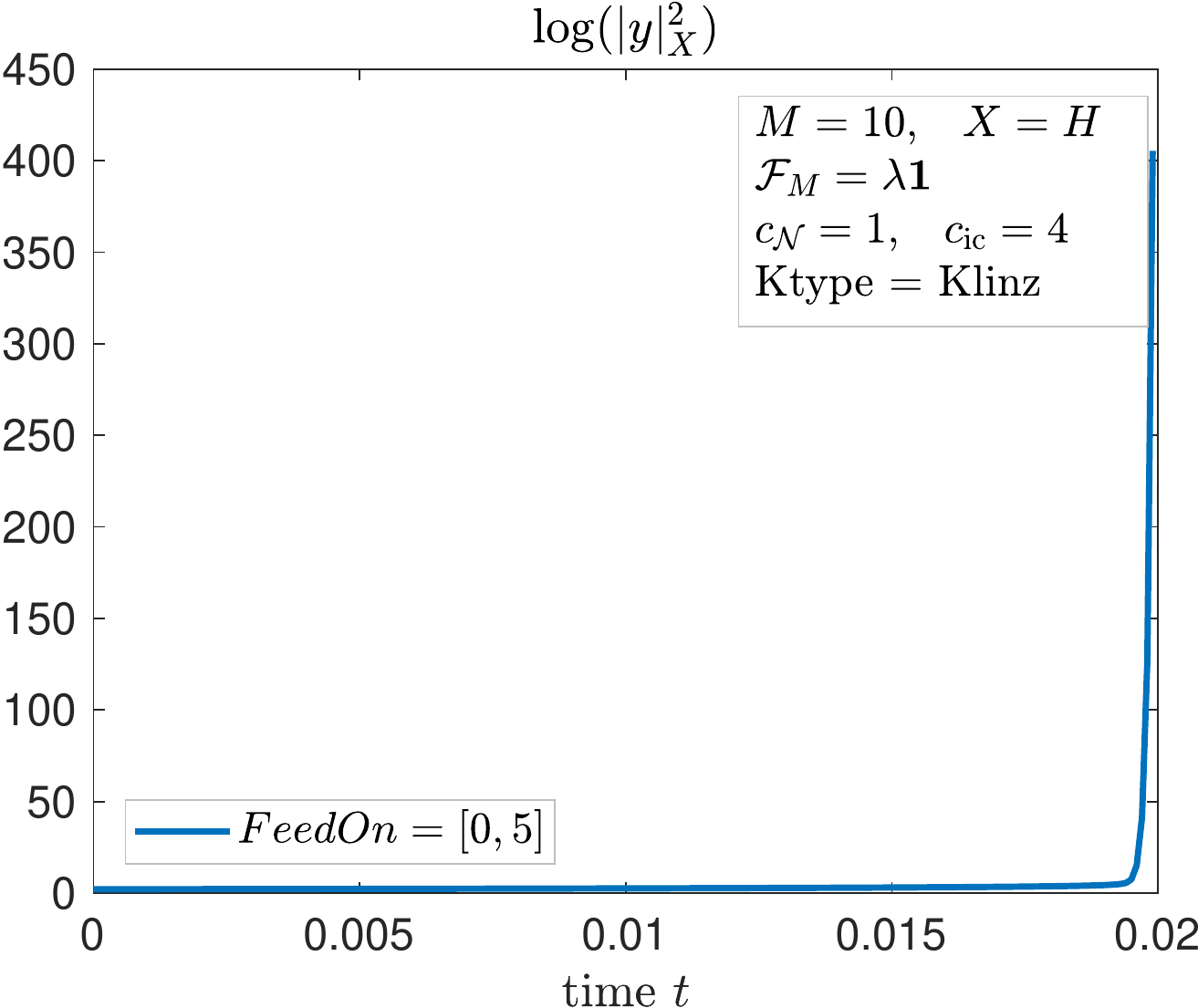}}
\subfigure%[]
{\includegraphics[width=0.315\textwidth]{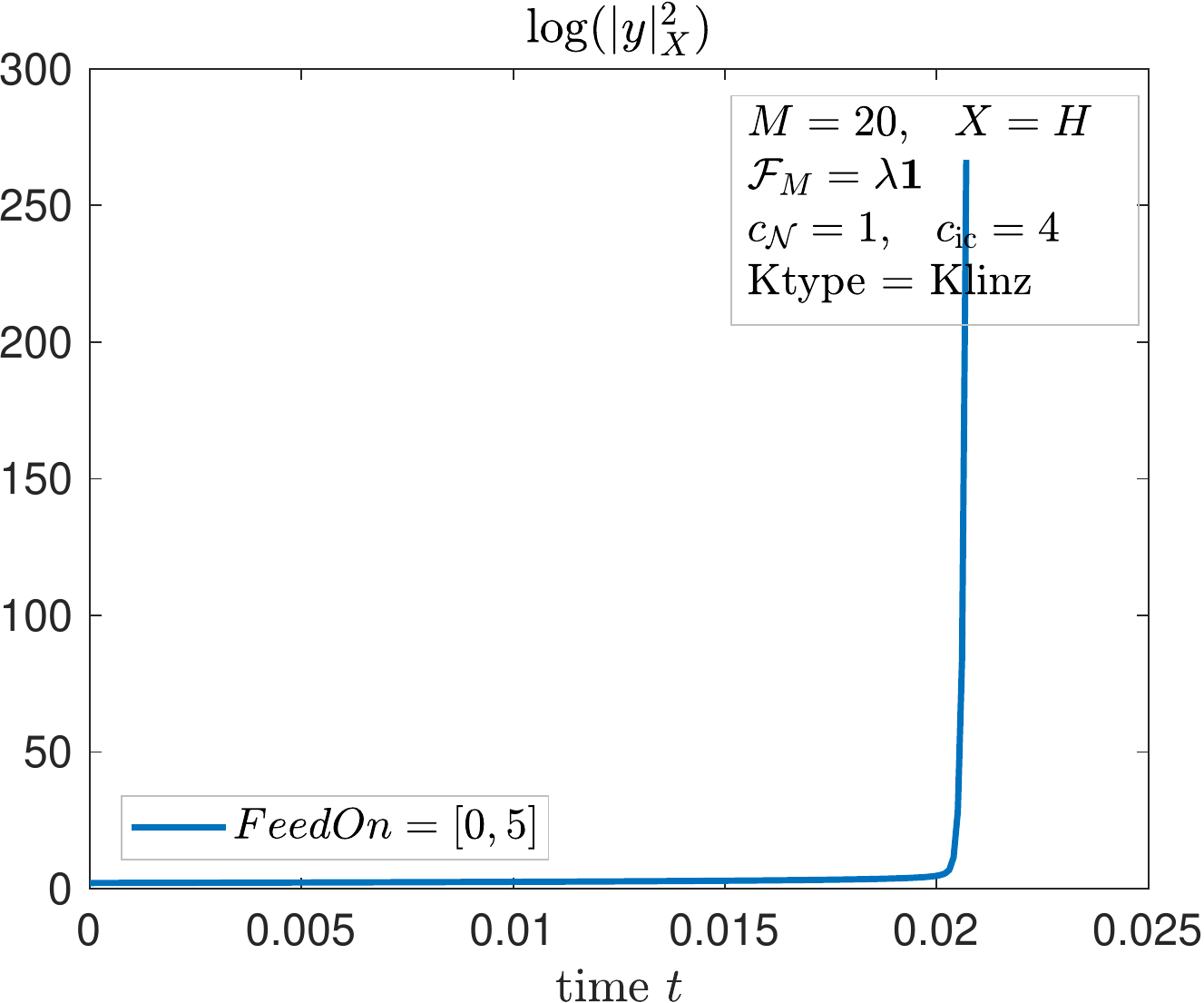}}
\subfigure%[]
{\includegraphics[width=0.315\textwidth]{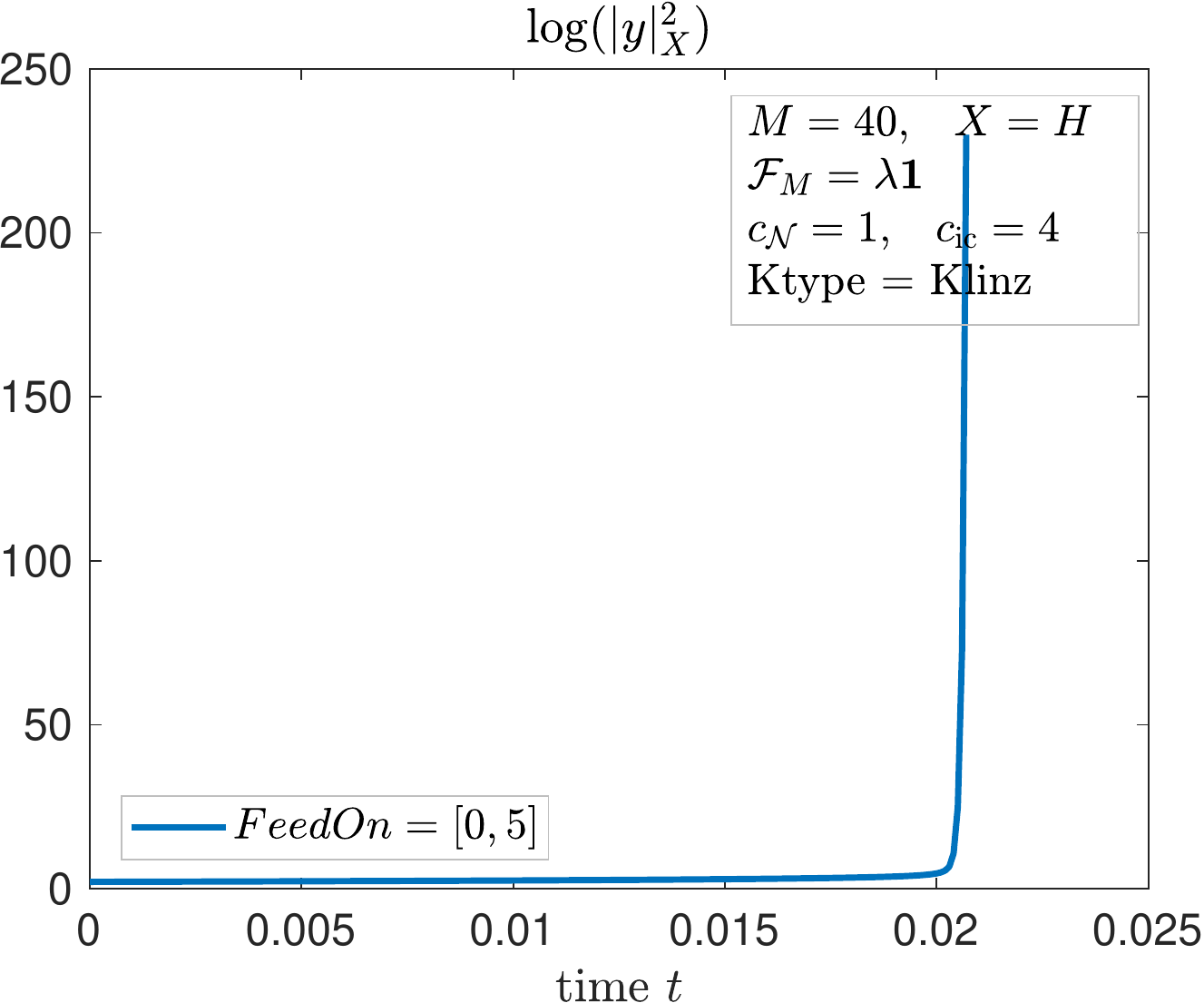}}\\
\subfigure%[]
{\includegraphics[width=0.315\textwidth]{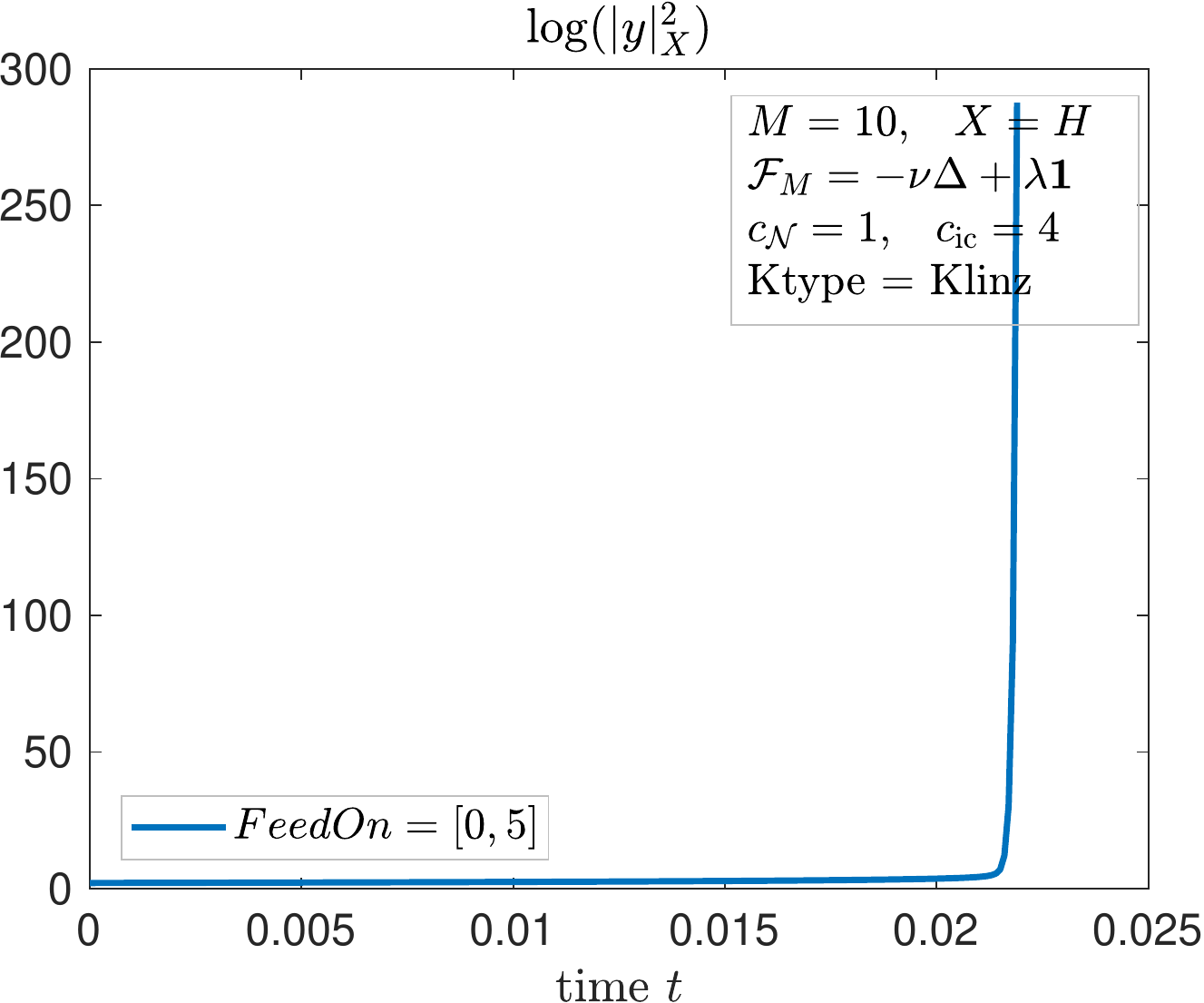}}
\subfigure%[]
{\includegraphics[width=0.315\textwidth]{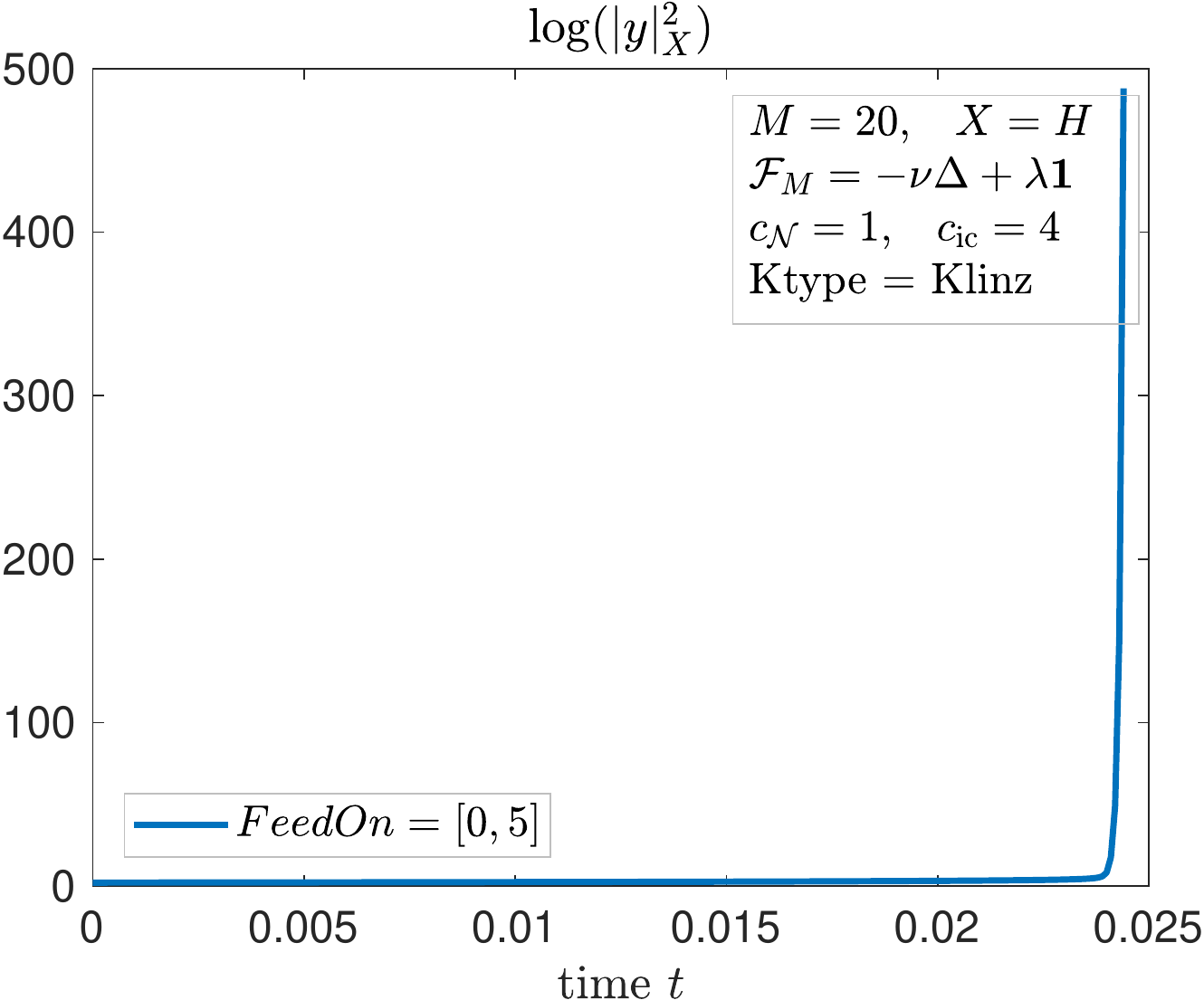}}
\subfigure%[]
{\includegraphics[width=0.315\textwidth]{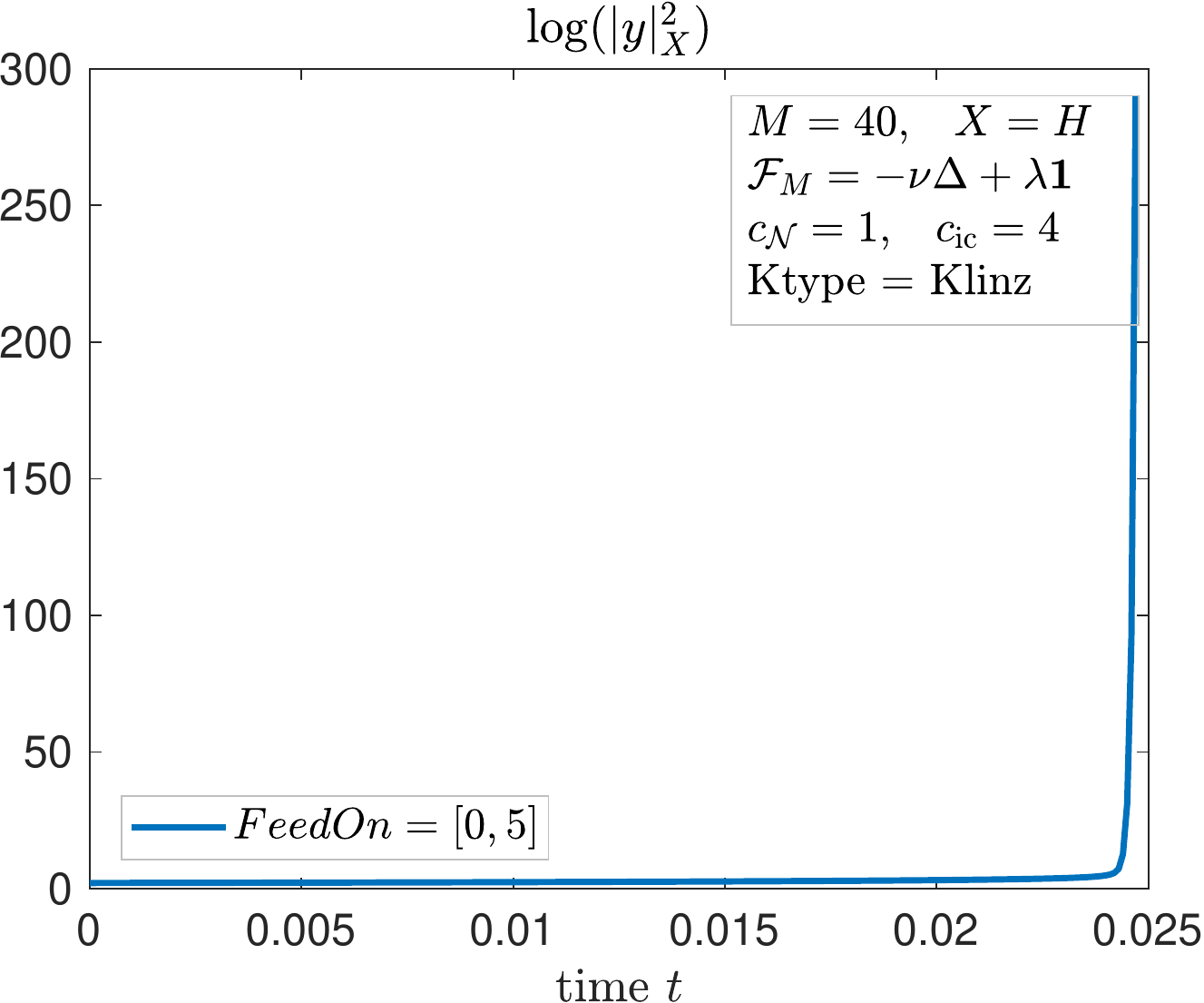}}
\caption{Nonlinear systems and linear feedback. Increasing the number of actuators.}
\label{Fig:CT.SF=NL102040}
\end{figure}

\begin{remark}
Note that when~$q(0)\coloneqq P_{E_{\M_\sigma}}y(0)=0$, the feedbacks correponding to $\FF_{\M}=\lambda\Id$ and
to~$\FF_{\M}=-\nu\Delta+\lambda\Id$ do coincide, because necessarily the solution~$q$ of~$\dot q=\FF_{\M}q$
vanishes in both cases. In particular, the corresponding closed-loop solutions must coincide. This is observed in
Figure~\ref{Fig:CT.SF=NN.q0=0}, where we have taken the initial condition~$y(0)=\sin(8\pi x)$. Note that, with~$M=7$, $E_{\M_\sigma}=E_\M=\linspan\{\sin(i\pi x)\mid i\in\{1,2,\dots,7\}\}$ and thus
$q(0)=P_{E_\M}\sin(8\pi x)=0$.% because~$\sin(10\pi x)\in E_\M^\perp$.
\end{remark}
% \begin{figure}[ht]
% \centering
% \subfigure%[]
% {\epsfig{file=Figures1/y_1KnonllamVcton5M7fic1grad.eps,width=.325\linewidth,clip=}}
% \qquad\subfigure%[]
% {\epsfig{file=Figures1/y_1KnonlAlamVcton5M7fic1grad.eps,width=.325\linewidth,clip=}}
% \caption{Nonlinear systems and nonlinear feedback.  $y(0)=c_{\rm ic}\sin(8\pi x)\in E_{\M_\sigma}^\perp$.}
% \label{Fig:CT.SF=NN.q0=0}
% \end{figure}
\begin{figure}[ht]
\centering
\subfigure%[]
{\includegraphics[width=0.325\textwidth]{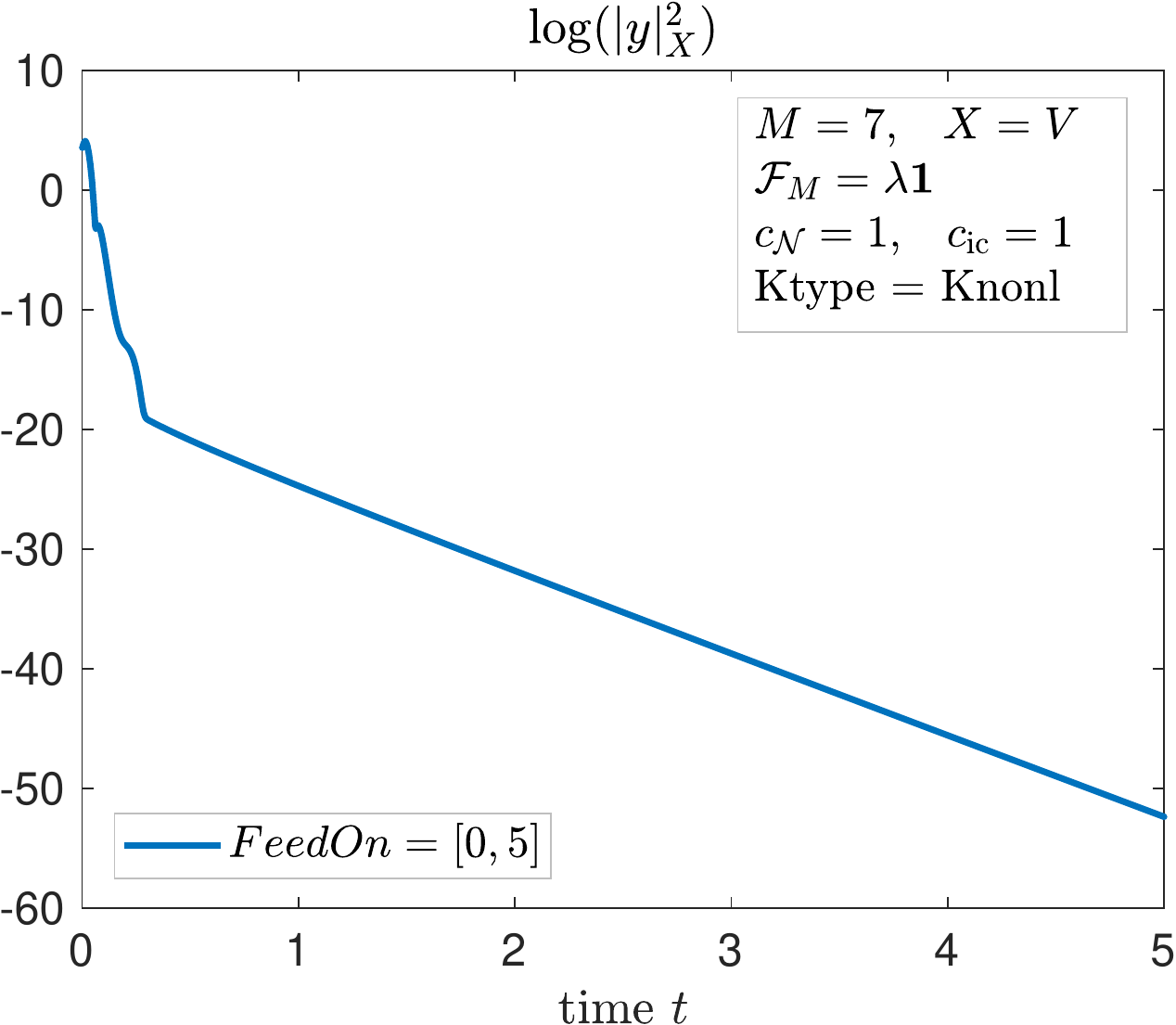}}
\qquad\subfigure%[]
{\includegraphics[width=0.325\textwidth]{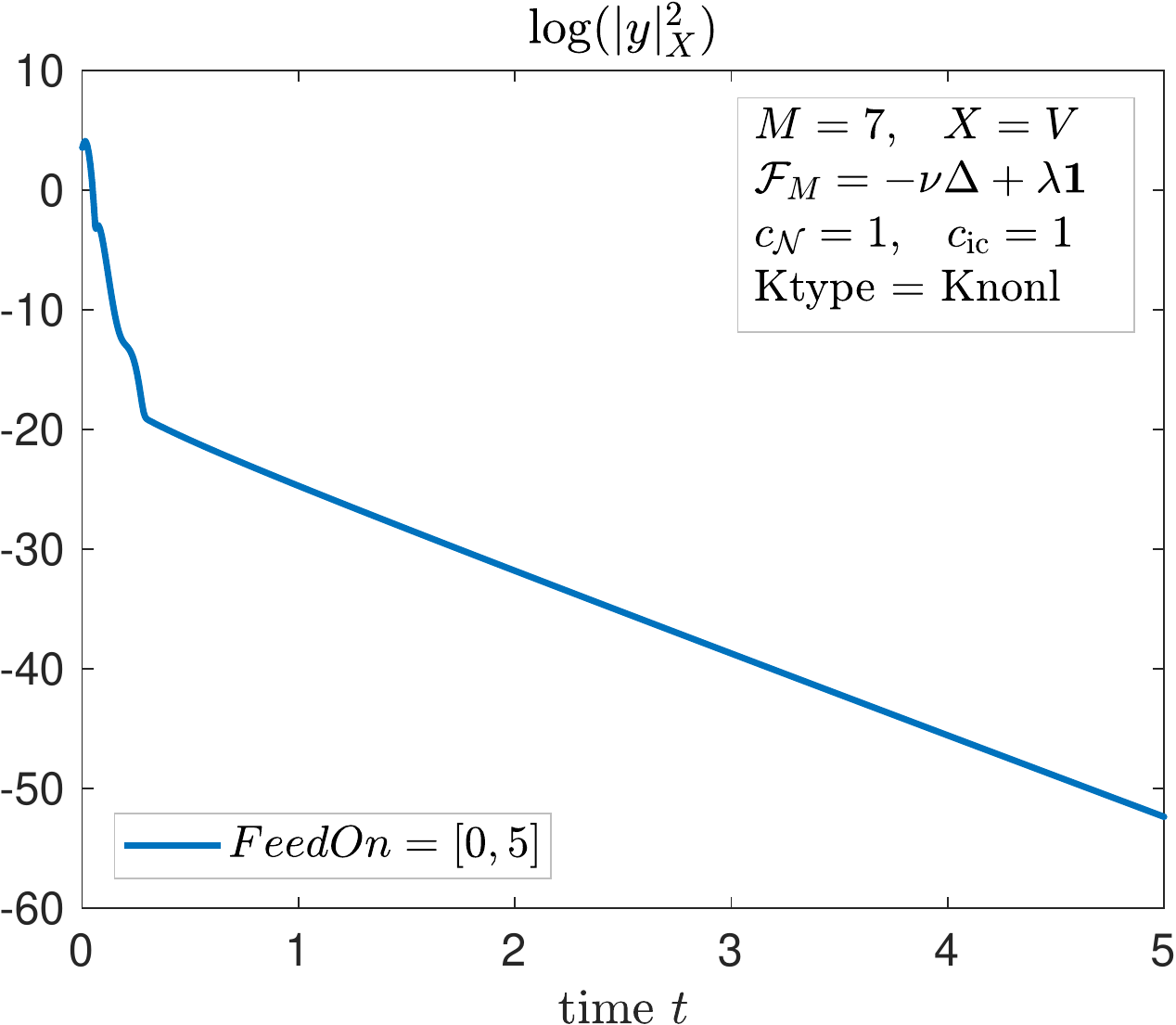}}
\caption{Nonlinear systems and nonlinear feedback.  $y(0)=c_{\rm ic}\sin(8\pi x)\in E_{\M_\sigma}^\perp$.}
\label{Fig:CT.SF=NN.q0=0}
\end{figure}

\begin{remark} We have taken our $M$~actuators with centers location~$c\in[0,1]^M$ as in~\eqref{Act-mxe}, which guarantee that the
norm~$\norm{P_{U_M}^{E_\M}}{\LL(H)}\le C_P$ remains bounded as~$M$ increases, with~$C_P$ independent of~$M$.
Since the number~$M$ of actuators needed to stabilize the system increase with~$C_P$, see~\eqref{depM_Vnorm},
it would be interesting to know the/an optimal location~$c=c^{\rm opt}\in[0,1]^M$ for the $M$~actuators
minimizing~$\norm{P_{U_M}^{E_\M}(c)}{\LL(H)}$.
We would like to refer
to~\cite{Morris11,MorrisYang16,KaliseKunischSturm18,PrivatTrelatZuazua17,MunchPedregalPeriago09}
for works related to finding a/the placement (and/or shape) of actuators, though
the functional to be minimized in those works is not~$\norm{P_{U_M}^{E_\M}(c)}{\LL(H)}$.
\end{remark}

%%%%%%%%%%%%%%%%%%%%%%%%%%%%%%%%%%%%%%%
%%%%%%%%%%%%%%%%%%%%%%%%%%%%%%%%%%%%%%%
%%%%%%%%%%%%%%%%%%%%%%%%%%%%%%%%%%%%%%%
\appendix
\section*{Appendix}\normalsize
%\section*{}
%\addcontentsline{toc}{section}{Appendix}
% \begin{center}
% %{\sc\Large --- Appendix: proofs of auxiliary results ---}
% {\sc\Large --- Appendix ---}
% \end{center}
%to have the effect of a new section we have to reset the counters
\setcounter{section}{1}%to restart with A (~1 in Alph)
\setcounter{theorem}{0} \setcounter{equation}{0}
\numberwithin{equation}{section}

%%%%%%%%%%%%%%%%%%%%%%%%%%%%%%%%%%%%%%%
\subsection{Proof of Proposition~\ref{P:NN}}\label{Apx:proofP:NN}

We recall the Young inequality~\cite{Young12} as follows: for all~$(a,b,\gamma_0)\in\R_0^3$ and all~$s>1$, 
we have
 \begin{align*}%\label{Young}
 ab=(\gamma_0a)(\gamma_0^{-1}b)
 \le\fractx{1}{s}\gamma_0^{s}a^s+\fractx{s-1}{s}\gamma_0^{-\frac{s}{(s-1)}}b^\frac{s}{s-1},
  \end{align*}
 Note that~$r\coloneqq\frac{s}{s-1}$ satisfies $\frac{1}{s}+\frac{1}{r}=1$. In particular, $ab\le a^s+b^\frac{s}{s-1}$.
Assumption~\ref{A:NN} 
implies that
\begin{align}
 &\quad2\Bigl( P_{E_{\M_\sigma}^\perp}^{U_{\#\M_\sigma}}\left(\NN(t,y_1)-\NN(t,y_2)\right),A(y_1-y_2)\Bigr)_{H}\notag\\
 &\le 2\norm{P_{E_{\M_\sigma}^\perp}^{U_{\#\M_\sigma}}}{\LL(H)}C_\NN\textstyle\sum\limits_{j=1}^{n}
  \left( \norm{y_1}{V}^{\zeta_{1j}}\norm{y_1}{\D(A)}^{\zeta_{2j}}+\norm{y_2}{V}^{\zeta_{1j}}\norm{y_2}{\D(A)}^{\zeta_{2j}}\right)
   \norm{y_1-y_2}{V}^{\delta_{1j}}\norm{y_1- y_2}{\D(A)}^{1+\delta_{2j}}\label{FQAQ1a}
    \end{align}
  and the Young inequality gives us  for all~$\gamma_0>0$, writing for
  simplicity~$\|P\|_\LL\coloneqq\norm{P_{E_{\M_\sigma}^\perp}^{U_{\#\M_\sigma}}}{\LL(H)}$,
 \begin{align*} 
 &2\Bigl( P_{E_{\M_\sigma}^\perp}^{U_{\#\M_\sigma}}\left(\NN(t,y_1)-\NN(t,y_2)\right),A(y_1-y_2)\Bigr)_{H}
 \le \textstyle\sum\limits_{j=1}^{n}\fractx{(1+\delta_{2j})}{2}\gamma_0^{\frac{2}{1+\delta_{2j}}}\norm{y_1- y_2}{\D(A)}^{2}\notag\\
 &\hspace*{3em}+\textstyle\sum\limits_{j=1}^{n}\fractx{(1-\delta_{2j})}{2}
 \left(\fractx{2C_\NN}{\gamma_0}\|P\|_\LL
 \norm{y_1-y_2}{V}^{\delta_{1j}}\right)^{\frac{2}{1-\delta_{2j}}}
 \left(
  \norm{y_1}{V}^{\zeta_{1j}}\norm{y_1}{\D(A)}^{\zeta_{2j}}+\norm{y_2}{V}^{\zeta_{1j}}\norm{y_2}{\D(A)}^{\zeta_{2j}}
  \right)^{\frac{2}{1-\delta_{2j}}}.%\label{FQAQ1}
   \end{align*}

 From~\cite[Prop.~2.6]{PhanRod17} we have that $(a_1+a_2)^s\le2^{\norm{s-1}{}}(a_1^s+a_2^s)$
 for~$(a_1,a_2,s)\in[0,+\infty)$, which implies
\begin{align}
 &2\Bigl( P_{E_{\M_\sigma}^\perp}^{U_{\#\M_\sigma}}\left(\NN(t,y_1)-\NN(t,y_2)\right),A(y_1-y_2)\Bigr)_{H} 
  \le \textstyle\sum\limits_{j=1}^{n}\fractx{(1+\delta_{2j})}{2}\gamma_0^{\frac{2}{1+\delta_{2j}}}
  \norm{y_1- y_2}{\D(A)}^{2}\label{FQAQ1}\\
 &\hspace*{1em}+\textstyle\sum\limits_{j=1}^{n}(1-\delta_{2j})%\fractx{(1-\delta_{2j})}{2}
 2^{\frac{2(1+\delta_{2j})}{1-\delta_{2j}}}\left(\fractx{C_\NN}{\gamma_0}\|P\|_\LL
 \right)^{\frac{2}{1-\delta_{2j}}}
 \left( \norm{y_1}{V}^\frac{2\zeta_{1j}}{1-\delta_{2j}} \norm{y_1}{\D(A)}^\frac{2\zeta_{2j}}{1-\delta_{2j}}
 +\norm{y_2}{V}^\frac{2\zeta_{1j}}{1-\delta_{2j}} \norm{y_2}{\D(A)}^\frac{2\zeta_{2j}}{1-\delta_{2j}}
 \right)\norm{y_1-y_2}{V}^{\frac{2\delta_{1j}}{1-\delta_{2j}}}.\notag
 \end{align}

 Observe that if we fix an arbitrary~$\widehat\gamma_0>0$ and set, in~\eqref{FQAQ1},
  \[
 \gamma_0= \gamma_{0j}\coloneqq\left(\fractx{2}{1+\delta_{2j}}\fractx{\widehat\gamma_0}{n}\right)^\frac{1+\delta_{2j}}{2} 
 \quad\Longleftrightarrow\quad
 \fractx{\widehat\gamma_0}{n}=\fractx{1+\delta_{2j}}{2}\gamma_{0j}^\frac{2}{1+\delta_{2j}},
\]
then, since~$\delta_{2j}<1$, we obtain
\[
 \left(\fractx1{\gamma_{0j}}\right)^{\frac{2}{1-\delta_{2j}}}
 =\left( \fractx{n(1+\delta_{2j})}{2\widehat\gamma_0}
 \right)^{\frac{1+\delta_{2j}}{1-\delta_{2j}}}
 <\left( \fractx{n}{\widehat\gamma_0} \right)^{\frac{1+\delta_{2j}}{1-\delta_{2j}}}
  <1+\left( \fractx{n}{\widehat\gamma_0} \right)^{\frac{1+\|\delta_2\|}{1-\|\delta_2\|}}.
\]
with~$\|\delta_2\|\coloneqq\textstyle\max\limits_{1\le j\le n}|\delta_{2j}|$.
Hence, we arrive at
\begin{align}
 &2\Bigl( P_{E_{\M_\sigma}^\perp}^{U_{\#\M_\sigma}}\left(\NN(t,y_1)-\NN(t,y_2)\right),A(y_1-y_2)\Bigr)_{H} 
  \le \widehat\gamma_0 \norm{y_1- y_2}{\D(A)}^{2}\notag\\%\label{FQAQ1-1}\\
  &\hspace*{0em}+\!\left(\!1+\widehat\gamma_0^{-\frac{1+\|\delta_2\|}{1-\|\delta_2\|} }\!\right)
  \!\ovlineC{n,\|\delta_{2}\|,\frac{1}{1-\|\delta_{2}\|},C_\NN,\|P\|_\LL}
 \textstyle\sum\limits_{j=1}^n\!\left(\! \norm{y_1}{V}^\frac{2\zeta_{1j}}{1-\delta_{2j}}
 \!\norm{y_1}{\D(A)}^\frac{2\zeta_{2j}}{1-\delta_{2j}}
 +\norm{y_2}{V}^\frac{2\zeta_{1j}}{1-\delta_{2j}}\! \norm{y_2}{\D(A)}^\frac{2\zeta_{2j}}{1-\delta_{2j}}
 \!\right)\!\norm{y_1-y_2}{V}^{\frac{2\delta_{1j}}{1-\delta_{2j}}}\!.\notag
 \end{align}

 In the particular case~$y_1=q+Q$ and~$y_2=q$ with~$(q,Q)\in E_{\M_\sigma}\times E_{\M_\sigma}^\perp$,
 estimate~\eqref{FQAQ1a} also gives us
 \begin{align}
 &\quad2\Bigl( P_{E_{\M_\sigma}^\perp}^{U_{\#\M_\sigma}}\left(\NN(t,q+Q)-\NN(t,q)\right),AQ\Bigr)_{H}\notag\\
 &\le 2\dnorm{P}{\LL}C_\NN\textstyle\sum\limits_{j=1}^{n}
  \left( \norm{q+Q}{V}^{\zeta_{1j}}\norm{q+Q}{\D(A)}^{\zeta_{2j}}+\norm{q}{V}^{\zeta_{1j}}\norm{q}{\D(A)}^{\zeta_{2j}}\right)
   \norm{Q}{V}^{\delta_{1j}}\norm{Q}{\D(A)}^{1+\delta_{2j}}\notag\\
   &\le 2(1+2^{\|\zeta-1\|})\dnorm{P}{\LL}C_\NN\textstyle\sum\limits_{j=1}^{n}
  \left( \norm{q}{V}^{\zeta_{1j}}+\norm{Q}{V}^{\zeta_{1j}}\right)\left(\norm{q}{\D(A)}^{\zeta_{2j}}+\norm{Q}{\D(A)}^{\zeta_{2j}}\right)
   \norm{Q}{V}^{\delta_{1j}}\norm{Q}{\D(A)}^{1+\delta_{2j}}\notag\\
     &\le 2(1+2^{\|\zeta-1\|})\dnorm{P}{\LL}C_\NN\textstyle\sum\limits_{j=1}^{n}
  \left( \norm{q}{V}^{\zeta_{1j}}+\norm{Q}{V}^{\zeta_{1j}}\right)
   \norm{Q}{V}^{\delta_{1j}}\norm{Q}{\D(A)}^{1+\delta_{2j}+\zeta_{2j}}\notag\\
     &\hspace*{3em}+ 2(1+2^{\|\zeta-1\|})\dnorm{P}{\LL}C_\NN\textstyle\sum\limits_{j=1}^{n}
  \left( \norm{q}{V}^{\zeta_{1j}}+\norm{Q}{V}^{\zeta_{1j}}\right)\norm{q}{\D(A)}^{\zeta_{2j}}
   \norm{Q}{V}^{\delta_{1j}}\norm{Q}{\D(A)}^{1+\delta_{2j}}\notag\\
    &\le 2^{2+\|\zeta-1\|}\dnorm{P}{\LL}C_\NN\textstyle\sum\limits_{j=1}^{n}
  \left( \norm{q}{V}^{\zeta_{1j}}\norm{Q}{V}^{\delta_{1j}}+\norm{Q}{V}^{\zeta_{1j}+\delta_{1j}}\right)
   \norm{Q}{\D(A)}^{1+\delta_{2j}+\zeta_{2j}}\notag\\
     &\hspace*{3em}+ 2^{2+\|\zeta-1\|}\dnorm{P}{\LL}C_\NN\textstyle\sum\limits_{j=1}^{n}
  \left( \norm{q}{V}^{\zeta_{1j}}\norm{q}{\D(A)}^{\zeta_{2j}}\norm{Q}{V}^{\delta_{1j}}
  +\norm{q}{\D(A)}^{\zeta_{2j}}\norm{Q}{V}^{\zeta_{1j}+\delta_{1j}}\right)
   \norm{Q}{\D(A)}^{1+\delta_{2j}}
 \notag
    \end{align}
 with~$\|\zeta-1\|\coloneqq\max\{|\zeta_{k,j}-1|\mid 1\le j\le n, 1\le k\le2\}$.
  By the Young inequality, with~$\gamma_0>0$ and~$\widetilde\gamma_0>0$,
 \begin{align}
 &\quad2\Bigl( P_{E_{\M_\sigma}^\perp}^{U_{\#\M_\sigma}}\left(\NN(t,q+Q)-\NN(t,q)\right),AQ\Bigr)_{H}
 \le\textstyle\sum\limits_{j=1}^{n}\left(\fractx{1+\delta_{2j}
 +\zeta_{2j}}{2}\widetilde\gamma_0^{\fractx{2}{1+\delta_{2j}+\zeta_{2j}}}
 +\fractx{1+\delta_{2j}}{2}\gamma_0^{\fractx{2}{1+\delta_{2j}}}\right)\norm{Q}{\D(A)}^2\notag\\
    &\hspace*{3em}+ \textstyle\sum\limits_{j=1}^{n}\left(2^{2+\|\zeta-1\|}
    \dnorm{P}{\LL}\fractx{C_\NN}{\widetilde\gamma_0}\right)^{\frac{2}{1-\delta_{2j}-\zeta_{2j}}}
  \left( \norm{q}{V}^{\zeta_{1j}}
  \norm{Q}{V}^{\delta_{1j}}+\norm{Q}{V}^{\zeta_{1j}+\delta_{1j}}\right)^{\frac{2}{1-\delta_{2j}-\zeta_{2j}}}
   \notag\\
     &\hspace*{3em}+ \textstyle\sum\limits_{j=1}^{n}\left(2^{2+\|\zeta-1\|}
     \dnorm{P}{\LL}\fractx{C_\NN}{\gamma_0}\right)^{\frac{2}{1-\delta_{2j}}}
   \left( \norm{q}{V}^{\zeta_{1j}}\norm{q}{\D(A)}^{\zeta_{2j}}\norm{Q}{V}^{\delta_{1j}}
  +\norm{q}{\D(A)}^{\zeta_{2j}}\norm{Q}{V}^{\zeta_{1j}+\delta_{1j}}\right)^{\frac{2}{1-\delta_{2j}}}
 \label{FQAQ2}
    \end{align}
    
Fixing an arbitrary~$\widehat\gamma_0>0$ and setting, in~\eqref{FQAQ2},
  \[
 \gamma_0= \gamma_{0j}\coloneqq\left(\fractx{2}{1+\delta_{2j}}\fractx{\widehat\gamma_0}{2n}\right)^\frac{1+\delta_{2j}}{2} 
 \quad\Longleftrightarrow\quad
 \fractx{\widehat\gamma_0}{2n}=\fractx{1+\delta_{2j}}{2}\gamma_{0j}^\frac{2}{1+\delta_{2j}},
\]
then, since~$\delta_{2j}<1$, we obtain
\[
 \left(\fractx1{\gamma_{0j}}\right)^{\frac{2}{1-\delta_{2j}}}
 =\left( \fractx{2n(1+\delta_{2j})}{2\widehat\gamma_0}
 \right)^{\frac{1+\delta_{2j}}{1-\delta_{2j}}}
 <\left( \fractx{2n}{\widehat\gamma_0} \right)^{\frac{1+\delta_{2j}}{1-\delta_{2j}}}
 <1+\left( \fractx{2n}{\widehat\gamma_0} \right)^{\frac{1+\|\delta_{2}\|}{1-\|\delta_{2}+\zeta_{2}\|}}
 \]
with~$\|\zeta_2+\delta_2\|\coloneqq\textstyle\max\limits_{1\le j\le n}|\delta_{2j}+\zeta_{2j}|$.

Also, with~$ \widetilde\gamma_0= \widetilde\gamma_{0j}\coloneqq\left(\fractx{2}{1+\delta_{2j}+\zeta_{2j}}
 \fractx{\widehat\gamma_0}{2n}\right)^\frac{1+\delta_{2j}+\zeta_{2j}}{2}
 \quad\Longleftrightarrow\quad
 \fractx{\widehat\gamma_0}{2n}=\fractx{1+\delta_{2j}+\zeta_{2j}}{2}\widetilde\gamma_{0j}^\frac{2}{1+\delta_{2j}+\zeta_{2j}}
$,
we find $\left(\fractx1{\widetilde\gamma_{0j}}\right)^{\frac{2}{1-\delta_{2j}-\zeta_{2j}}}
  <1+\left( \fractx{2n}{\widehat\gamma_0} \right)^{\frac{1+\|\delta_2\|}{1-\|\zeta_2+\delta_2\|}}$.
 Now we show that, from~\eqref{FQAQ2} we can obtain
\begin{align}
 &2\Bigl(  P_{E_{\M_\sigma}^\perp}^{U_{\#\M_\sigma}}\left(\NN(t,q+Q)-\NN(t,q)\right),AQ\Bigr)_{H} 
  \le \widehat\gamma_0 \norm{Q}{\D(A)}^{2}\notag\\%\label{FQAQ1-1}\\
  &\hspace*{2em}+\left(1+\widehat\gamma_0^{-\frac{1+\|\delta_2\|}{1-\|\zeta_2+\delta_2\|} }\right)
  \overline{C}
 \left( 1 +\norm{q}{V}^{2\|\frac{\zeta_1+\delta_1}{1-\zeta_2-\delta_2}\|}\right)
 \left(1+\norm{q}{\D(A)}^{\|\frac{\zeta_2}{1-\delta_2}\|}\right)
 \left( 1+\norm{Q}{V}^{2\|\frac{\zeta_1+\delta_1}{1-\zeta_2-\delta_2}\|-2} \right)
 \norm{Q}{V}^{2}.\notag
\end{align}  
 with the following constants:~$\|\frac{\zeta_1+\delta_1}{1-\zeta_2-\delta_2}\|\coloneqq\textstyle\max\limits_{1\le j\le n}
 |\frac{\delta_{1j}+\zeta_{1j}}{1-\delta_{2j}-\zeta_{2j}}|$, ~$\|\frac{\zeta_2}{1-\delta_2}\|
 \coloneqq\textstyle\max\limits_{1\le j\le n}
 |\frac{\zeta_{2j}}{1-\delta_{2j}}|$,
 and~$\overline{C}=\ovlineC{n,\|\zeta_1\|,\|\zeta_2\|,\frac{1}{1-\|\delta_{2}\|},
 \frac{1}{1-\|\delta_{2}+\zeta_2\|},C_\NN,\|P\|_\LL}$.
 Indeed, we can use the inequalities
 \begin{align*}
  &\left( \norm{q}{V}^{\zeta_{1j}}
  \norm{Q}{V}^{\delta_{1j}}+\norm{Q}{V}^{\zeta_{1j}+\delta_{1j}}\right)^{\frac{2}{1-\delta_{2j}-\zeta_{2j}}}
  \le C_0\left( \norm{q}{V}^\frac{2(\zeta_{1j}+\delta_{1j})}{1-\delta_{2j}-\zeta_{2j}}
  +\norm{Q}{V}^\frac{2(\zeta_{1j}+\delta_{1j})}{1-\delta_{2j}-\zeta_{2j}}\right),
  \\
  &\left( \!\norm{q}{V}^{\zeta_{1j}}\!\norm{q}{\D(A)}^{\zeta_{2j}}\!\norm{Q}{V}^{\delta_{1j}}
  +\norm{q}{\D(A)}^{\zeta_{2j}}\!\norm{Q}{V}^{\zeta_{1j}+\delta_{1j}}\right)^{\frac{2}{1-\delta_{2j}}}
  \le C_1\!\left(\! \norm{q}{V}^\frac{2\zeta_{1j}}{1-\delta_{2j}}\norm{q}{\D(A)}^\frac{2\zeta_{2j}}{1-\delta_{2j}}
  \norm{Q}{V}^\frac{2\delta_{1j}}{1-\delta_{2j}}
  +\norm{q}{\D(A)}^\frac{2\zeta_{2j}}{1-\delta_{2j}}
  \norm{Q}{V}^\frac{2(\zeta_{1j}+\delta_{1j})}{1-\delta_{2j}}\!\right)\\
  &\hspace*{5em}\le C_2\left( 1+\norm{q}{V}^{\frac{2\zeta_{1j}}{1-\delta_{2j}}}\right)
  \norm{q}{\D(A)}^\frac{2\zeta_{2j}}{1-\delta_{2j}}
  \left(\norm{Q}{V}^{\frac{2\delta_{1j}}{1-\delta_{2j}}-2}
  + \norm{Q}{V}^{\frac{2(\zeta_{1j}+\delta_{1j})}{1-\delta_{2j}}-2}\right)\norm{Q}{V}^{2},
 \end{align*}
where the constants~$C_k$, $k\in\{1,2,3\}$ are of the
form~$C_k=\ovlineC{\|\zeta_1+\delta_1\|,\frac{1}{1-\|\delta_{2}+\zeta_2\|},\frac{1}{1-\|\delta_{2}\|}}$.
 \qed

%%%%%%%%%%%%%%%%%%%%%%%%%%%%%%%%%%%%%%%
\subsection{Proof of Proposition~\ref{P:odeh0}}\label{Apx:proofP:odeh0}
Observe that, since~$p\ge0$, the function
$w\mapsto \norm{w}{\R}^{p}w$ is locally Lipschitz. Therefore, the solutions
of~\eqref{dyn-odew}, do exist and are unique, in a small time interval, say for
time~$t\in[0,\tau)$ with~$\tau$ small. When~$w_0=0$ the solution is the trivial one~$w=0$.
Note that the equilibria of~\eqref{dyn-odewh0}, that is, the solutions of~$\dot w=0$, are given by
$\overline w_1=0$ and~$\overline w_2^\pm
 =\pm\left(\fractx{\breve C_1}{\breve C_2}\right)^\frac1{p}$. 
Furthermore, we observe that~$\dot w<0$ if~$w\in(0,\overline w_2^+)$, which implies that
the solution issued from $w(s)\in(0,\overline w_2^+)$ at time $t=s$, is globally
defined, for all time $t\ge s$, is decreasing, and thus remains in~$(0,\overline w_2^+)$. Note that
\[
-\breve C_1 w \le \dot w \le-\bigl(\breve C_1 -\breve C_2\norm{w_0}{\R}^{p}\bigr) w, \quad\mbox{for}\quad w\in(0,\overline w_2^+).
\]
Therefore we can conclude that~\eqref{est.wh0} holds for~$w_0\in(0,\overline w_2^+)$.
Next we consider the case~$w_0\in(-\overline w_2^+,0)$. Denoting the solution issued from~$w(s)=w_s\in\R$, at
time~$s$, by~$w(t)=S(t,s)(w_{s})$, $t\ge s$, we find~$S(t,s)(w_{s})=-S(t,s)(-w_{s})$,
because with~$w^+(t)\coloneqq S(t,s)(-w_{s})$, we have
\[
 \fractx{\ed}{\ed t}(-w^+)=-\dot w^+=-\left(-\bigl(\breve C_1 -\breve C_2\norm{w^+}{\R}^{p}\bigr)w^+\right)
 =-\bigl(\breve C_1 -\breve C_2\norm{-w^+}{\R}^{p}\bigr)(-w^+),\qquad-w^+(s)=w_s.
\]
The uniqueness of the solution, implies that~$S(t,s)(w_s)=-w^+$. Since~$-w_{0}\in(0,\overline w_2^+)$,
it follows, from above, that~$\norm{w^+}{\R}$
satisfies~\eqref{est.wh0} and, from~$\norm{S(t,0)(w_{0})}{\R}=\norm{w^+(t)}{\R}$, we obtain
that~\eqref{est.wh0} holds for~$w_0\in(-\overline w_2^+,0)$.
\qed

% %{\scriptsize%
% \bibliographystyle{/home/srodrigues/InputsBIB/Style/plainurl}
% \bibliography{/home/srodrigues/InputsBIB/Refs/Mybiblio}
% 
% %}

\end{document}